\documentclass[10pt]{amsart}
\usepackage[utf8]{inputenc} 
\usepackage[T1]{fontenc}
\usepackage{amsfonts,amscd}
\usepackage[a4paper, margin=1.25in]{geometry}
\usepackage[parfill]{parskip}
\usepackage{xcolor}
\usepackage{comment}
\usepackage{amssymb,latexsym,amsmath,amsthm,enumitem,mathrsfs}
\usepackage{fullpage}
\usepackage{fancyhdr}
\usepackage{hyperref}
\usepackage{diagbox}
\usepackage{array}
\usepackage{tikz}
\usetikzlibrary{arrows}
\usetikzlibrary{positioning}
\usepackage{float}
\usepackage{mathtools} 
\usepackage{dsfont}
\usepackage[titletoc,title]{appendix}
\newtheorem{theorem}{Theorem}
\newtheorem{defn}[theorem]{Definition}

\newtheorem{lemma}[theorem]{Lemma}

\DeclareRobustCommand{\rchi}{{\mathpalette\irchi\relax}}
\newcommand{\irchi}[2]{\raisebox{\depth}{$#1\chi$}} 

\mathtoolsset{showonlyrefs}

\allowdisplaybreaks
\usepackage{graphicx} 

\title[Symmetric power $L$-function coefficients over sums of squares]{Shifted
       convolution sums of coefficients of symmetric power $L$-functions with
       $k$-full kernels over sums of squares in arithmetic progressions}
\author{Jewel Mahajan}
\address{(Jewel Mahajan) Department of Mathematical Sciences, Indian Institute of Science Education and Research (IISER) Berhampur, Ganjam, Odisha 760003, India}
\email{jewelmahajan421@gmail.com}
\author{Arnab Mitra}
\address{(Arnab Mitra) School of Mathematical Sciences, National Institute of Science Education and Research,  An OCC of Homi Bhabha National Institute, Bhubaneswar, Via: Jatni, Khurda, Odisha-752050, India}
\email{arnab.mitra@niser.ac.in}
\date{June 2026}
\keywords{Dirichlet series, symmetric power $L$-function, sum of squares}
\subjclass[2020]{11M36, 11M41, 11M06}

\begin{document}
\begin{abstract}
  Let $q$ be an integer and let $f$ be a normalised Hecke eigenform of
  integral weight for the full modular group. Let $L(s,\mathrm{sym}^j f)$ denote
  the $j$-th symmetric power $L$-function associated to $f$, and let
  $\lambda_{\mathrm{sym}^j f}(n)$ denote its $n$-th coefficient. We study the
  behaviour of the partial sum of $\lambda_{\mathrm{sym}^j f}(n)$, and of its
  second moment, taken over those sums of $m$ squares that are congruent
  to $1$ modulo $q$. As an application, we investigate the shifted
  convolution sum of $\lambda_{\mathrm{sym}^j f}(n)$ against a $k$-full kernel
  function, for any $k \geq 2$. We also study the number of sign changes
  of $\lambda_{\mathrm{sym}^j f}(n)$ twisted with a $k$-full kernel function,
  again over sums of $m$ squares. Throughout, $m$ is even with
  $m \in \{2,4,6,8,10,12\}$.
\end{abstract}

\maketitle

\section{Introduction}
Let $k \geq 2$ be any integer. An integer valued function $a(n)$ is called a $k$-full function if $p^k | a(n)$ whenever $p|a(n)$. Note that any positive integer $n  1$ can be uniquely decomposed into $n = a_1(n)a_2(n)$ with $(a_1(n), a_2(n)) = 1$, where $a_1(n)$ is $k$-free and $a_2(n)$ is $k$-full. A non-negative integer valued function $a(n)$ is called a $k$-full kernel function if $a(n) = a(a_2(n)) $ for all $n \geq 1$ and $a(n) \ll n^\epsilon$ for all $\epsilon > 0$. The notion of a $k$-full kernel function is given by Ivi\'{c} and Tenenbaum \cite{Tenenbaum1986}. Note that $k$-full kernel functions are not necessarily multiplicative.

Let $S_{\kappa}$ be the space of all holomorphic cusp forms of integer weight $\kappa$ for the full modular group $\mathrm{SL}(2,\mathbb{Z})$ and $f \in S_{\kappa}$. Let $\lambda_{f}(n)$ be the normalised $n$-th Fourier coefficient of the Fourier expansion of $f(z)$ at the cusp $\infty$, i.e.,
\[
f(z)=\sum_{n=1}^{\infty}\lambda_{f}(n)n^{\frac{\kappa-1}{2}}e^{2\pi inz},
\]
where $\Im(z)>0$. Then the $L$-function attached to $\lambda_{f}(n)$ is defined as
\[
L(s,f)=\sum_{n=1}^{\infty}\frac{\lambda_{f}(n)}{n^{s}}
\]
for $\Re(s)>1$, where $\lambda_{f}(n)$ are the eigenvalues of all the Hecke operators $T_{n}$.

In 1974, Deligne \cite{deligne1974} proved that for any prime $p$, there exist complex numbers $\alpha(p)$ and $\beta(p)$ such that
\begin{align}\label{eq:deligne1}
\alpha(p)\beta(p) &= 1,  
\end{align}
\begin{align}\label{eq:deligne2}
|\alpha(p)| = |\beta(p)| &= 1. 
\end{align}
Then $L(s,f)$ can be written as
\[
L(s,f)=\prod_{p}\left(1-\frac{\alpha(p)}{p^{s}}\right)^{-1}\left(1-\frac{\beta(p)}{p^{s}}\right)^{-1},
\]
where the product runs over all primes \(p\). Also, $|\lambda_{f}(n)|\leq d(n)$, where $d(n)$ is the divisor function.

The symmetric square $L$-function is defined as
\[
L(s,\mathrm{sym}^{2}f):=\sum_{n=1}^{\infty}\frac{\lambda_{\mathrm{sym}^{2}f}(n)}{n^{s}}=\prod_{p}\left(1-\frac{\alpha^{2}(p)}{p^{s}}\right)^{-1}\left(1-\frac{\beta^{2}(p)}{p^{s}}\right)^{-1}\left(1-\frac{1}{p^{s}}\right)^{-1}
\]
for $\Re(s)>1$, where $\lambda_{\mathrm{sym}^{2}f}(n)$ is multiplicative.

In $1987$, Erd\H{o}s and Ivi\'{c} \cite{Ivic2} proved that \begin{align*}
    \sum_{n\leq x } a(n)d(n+1) = C_1x\log x + C_2x + O(x^{\frac{8}{9}+\epsilon})
\end{align*}
\begin{align*}
    \sum_{n\leq x } a(n)w(n+1) = D_1x\log \log x + D_2x + O\left (\frac{x}{\log x} \right ),
\end{align*}
where $d(n)$ is the divisor function, and $w(n)$ is the number of different prime factors of $n$, and $C_1>0$, $D_1 > 0$, $C_2$, $D_2$ are constants that can be evaluated explicitly.

In \cite{WangD}, L\"{u} and Wang investigated the shifted convolution
sums of squares of Fourier coefficients with a $2$-full kernel function $a(n)$ and obtained an asymptotic
formula for the sum
\begin{align*}
    \sum_{n \leq x}a(n)\lambda^2_f(n+1).
\end{align*}

Later, Venkatasubbareddy and Sankaranarayanan \cite{Venkat} extended the problem to fourth moments, and generalised to $k$-full kernel functions. For $q \geq 100$ and $q \ll x^{\frac{23}{181}-\epsilon}$, they proved that 
\begin{align}
    \sum_{\substack{n \leq x+1 \\ n \equiv 1 ~( \mathrm{mod }~q)}} \lambda_f^4(n) = c_1x \log x \frac{\phi(q)}{q^2}+ O\left( \frac{x^{\frac{158}{181}+\epsilon}q^{1+\epsilon}}{\phi(q)}\right)
\end{align}

and 

for $q \geq 100$ and $q \ll x^{\frac{3}{23}-\epsilon}$, they proved that 
\begin{align}
    \sum_{\substack{n \leq x+1 \\ n \equiv 1 ~( \mathrm{mod }~q)}} \lambda_f^4(n) = c_1x \log x \frac{\phi(q)}{q^2}+ O\left( \frac{x^{\frac{20}{23}+\epsilon}q^{1+\epsilon}}{\phi(q)}\right).
\end{align}
As an application, for a $k$-full kernel function $a(n)$, they proved that

\begin{align}
    \sum_{n \leq x}a(n)\lambda_f^4(n+1) = c_2x \log x+O\left(x^{\frac{520k+23}{543k}+\epsilon} \right).
\end{align}

On the other hand, Wang \cite{Youjun2024} proved that for a $k$-full kernel function $a(n)$ and $f \in S_{\kappa}$, 
\begin{align}
    \sum_{n\leq x } a(n)\lambda^2_{\mathrm{sym}^jf}(n+1) = C_{f,j}x+O \left( x^{1-\frac{2k-2}{3k(j+1)^2}+\epsilon} \right),
\end{align}
and \begin{align}
    \sum_{n\leq x } a(n)\lambda^3_{\mathrm{sym}^2f}(n+1) = Dx+O\left( x^{\frac{1805k+46}{1851k}+\epsilon} \right).
\end{align}

This article investigates the behaviour of the partial sum of $\lambda_{\mathrm{sym}^{j}f}(n)$ and its second moment for a sequence of integers $n$ which are expressible as a sum of $m$ squares, which are $1$ modulo $q$, where $q$ is an integer. As an application, we examine their shifted convolution sum with a weight function, which is a $k$-full kernel function for $k \geq 2$. We also examine the number of sign changes of $a(n)\lambda_{\mathrm{sym}^jf}(n+1)$, where $n+1$ runs over the sum of $m$ squares. Here $m = 2,4,6,8,10,12$.

In particular, we prove 

\begin{theorem} \label{thm:main}
    Let $f \in S_{\kappa}$ and let $q \geq 100$ be any integer. Then for any $\epsilon > 0$ and $q \ll x^{\frac{1}{j+1}-\epsilon}$ , where $j \geq 2$ is a fixed integer, we have $$\sum_{\substack{a_1^2+a_2^2 \leq x+1 \\ a_1^2+a_2^2 \equiv 1 ~( \mathrm{mod }~q) \\ (a_1,a_2) \in \mathbb{Z}^2}}\lambda_{\mathrm{sym}^jf}(a_{1}^{2}+a_2^2) = O \left( \frac{x^{1+\epsilon-\frac{1}{j+1}}q^{1+\epsilon}}{\phi(q)}\right).$$
\end{theorem}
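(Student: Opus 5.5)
Writing $r_2(n)=\#\{(a_1,a_2)\in\mathbb{Z}^2: a_1^2+a_2^2=n\}$, the sum in Theorem~\ref{thm:main} equals
\[
\sum_{\substack{n\le x+1\\ n\equiv 1 \ (\mathrm{mod}\ q)}} r_2(n)\,\lambda_{\mathrm{sym}^j f}(n).
\]
I detect the congruence with Dirichlet characters. Since $(1,q)=1$, orthogonality gives
\[
\sum_{\substack{n\le x+1\\ n\equiv1\,(q)}} r_2(n)\lambda_{\mathrm{sym}^jf}(n)=\frac{1}{\phi(q)}\sum_{\chi \bmod q}\ \sum_{n\le x+1} r_2(n)\lambda_{\mathrm{sym}^jf}(n)\chi(n).
\]
So it suffices to show that, uniformly in $\chi$,
\[
\sum_{n\le x+1} r_2(n)\lambda_{\mathrm{sym}^jf}(n)\chi(n)\ll x^{1-\frac1{j+1}+\epsilon}q^{\epsilon}.
\]
Summing over the $\phi(q)\le q$ characters then yields the bound $x^{1+\epsilon-\frac1{j+1}}q^{1+\epsilon}/\phi(q)$.

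Next I would write $r_2(n)=4\sum_{d\mid n}\chi_{-4}(d)=4(1*\chi_{-4})(n)$ and factor the generating Dirichlet series
\[
F_\chi(s)=\sum_{n\ge1}\frac{r_2(n)\lambda_{\mathrm{sym}^jf}(n)\chi(n)}{n^s}.
\]
Compare Euler factors at primes $p\nmid 2q$. There $r_2(p)=4(1+\chi_{-4}(p))$, so the $p^{-s}$ coefficient is $4\bigl(\lambda_{\mathrm{sym}^jf}(p)\chi(p)+\lambda_{\mathrm{sym}^jf}(p)\chi\chi_{-4}(p)\bigr)$. This gives
\[
F_\chi(s)=4\,L(s,\mathrm{sym}^jf\otimes\chi)\,L(s,\mathrm{sym}^jf\otimes\chi\chi_{-4})\,U_\chi(s),
\]
where $U_\chi(s)$ is an Euler product that converges absolutely and is uniformly bounded in $\Re s\ge \frac12+\epsilon$. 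The key fact here is that the $p^{-2s}$ terms cancel up to coefficients of size $O(p^{\epsilon})$ times convergent factors. The primes $p\mid 2q$ contribute only $O(q^\epsilon)$. Both $L$-functions are twists of $\mathrm{sym}^jf$, which is automorphic of degree $j+1$ (by Newton--Thorne), by nontrivial-enough characters. Since $\mathrm{sym}^jf$ is not self-dual-twisted to contain a trivial factor, they are entire with no pole at $s=1$. Hence there is no main term, consistent with the $O$-statement.

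Then I would apply Perron's formula with truncation $T$:
\[
\sum_{n\le x+1}(\cdots)=\frac1{2\pi i}\int_{1+\epsilon-iT}^{1+\epsilon+iT}F_\chi(s)\frac{(x+1)^s}{s}\,ds+O\Bigl(\frac{x^{1+\epsilon}}{T}\Bigr),
\]
using $r_2(n)\lambda_{\mathrm{sym}^jf}(n)\ll n^\epsilon$. I shift the contour to $\Re s=\frac12+\epsilon$, and no residues arise. The new vertical segment and the horizontal segments are bounded by convexity bounds. The analytic conductor of each twisted $L$-function is $\asymp (q(1+|t|))^{j+1}$, so on $\Re s=\frac12+\epsilon$ each is $\ll (q|t|)^{\frac{j+1}{4}+\epsilon}$. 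Rather than multiplying two pointwise bounds, which is too lossy, I would apply Cauchy--Schwarz on the vertical line and use a mean-value (approximate functional equation or Gallagher-type) bound
\[
\int_T^{2T}\bigl|L(\tfrac12+\epsilon+it,\mathrm{sym}^jf\otimes\psi)\bigr|^2dt\ll (qT)^{\frac{j+1}{2}+\epsilon}+T.
\]
With either estimate, the vertical integral is roughly $x^{1/2}(qT)^{\frac{j+1}{2}+\epsilon}$ and the horizontal pieces are similar. I balance this against $x^{1+\epsilon}/T$ by choosing $T\approx x^{\frac1{j+1}}$. This balancing is exactly where the admissible range $q\ll x^{\frac1{j+1}-\epsilon}$ enters: it keeps the $q$-powers absorbed into $x^{\epsilon}$-losses, or into the vertical term being dominated by $x^{1-\frac1{j+1}}$.

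The main obstacle is this last optimization. A pure convexity treatment of the product of two degree-$(j+1)$ $L$-functions on $\Re s=\frac12$ gives $x^{1/2}(qT)^{\frac{j+1}{2}}$. Balancing with $x/T$ then gives $T=x^{1/(j+3)}$ rather than $x^{1/(j+1)}$. I would therefore first try moving the line to $\Re s=\sigma$ with $\sigma$ closer to $1$ and interpolating via Phragmén--Lindelöf. On $\Re s=\sigma$ each factor is $\ll (qT)^{\frac{(j+1)(1-\sigma)}{2}}$, giving a total of $x^\sigma (qT)^{(j+1)(1-\sigma)}$. I would choose $\sigma$ and $T$ so that this and $x/T$ both equal $x^{1-\frac1{j+1}}$. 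The point to check carefully is whether the stated exponent $1-\frac1{j+1}$ is attainable in the $q$-range claimed. If not, I would fall back on the second-moment bound to save the needed factor.
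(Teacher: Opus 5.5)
The gap is in the $q$-aspect, and your own reduction exposes it. Since $q/\phi(q)\ll\log\log q$, the stated bound is essentially $x^{1-\frac{1}{j+1}+\epsilon}q^{\epsilon}$ for the whole progression sum. So you really do need $\sum_{n\le x+1}r_2(n)\lambda_{\mathrm{sym}^jf}(n)\chi(n)\ll x^{1-\frac{1}{j+1}+\epsilon}q^{\epsilon}$ for each $\chi$ (or on average over $\chi$), and nothing in your plan delivers it.

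Take $\chi$ of conductor $\asymp q$ and write $L_1L_2:=L(s,\mathrm{sym}^jf\otimes\chi)L(s,\mathrm{sym}^jf\otimes\chi\chi_{-4})$. Both the convexity bound and the mean square $\int_{T_1}^{2T_1}|L(\frac12+\epsilon+it,\mathrm{sym}^jf\otimes\chi)|^2\,dt\ll(qT_1)^{\frac{j+1}{2}+\epsilon}$ put a factor $q^{\frac{j+1}{2}}$ on $L_1L_2$. The line $\Re s=\frac12+\epsilon$ and the horizontal segments therefore contribute $x^{\frac12+\epsilon}q^{\frac{j+1}{2}}T^{\frac{j+1}{2}-1}$. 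Balancing this against $x^{1+\epsilon}/T$ forces $T=x^{\frac{1}{j+1}}/q$, which gives only $x^{1-\frac{1}{j+1}+\epsilon}q^{1+\epsilon}$ per character.

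Your claim that the range $q\ll x^{\frac{1}{j+1}-\epsilon}$ lets the $q$-powers be absorbed into $x^{\epsilon}$ is false: in that range $q^{\frac{j+1}{2}}$ can be as large as $x^{1/2}$. The range only guarantees that $T=x^{\frac{1}{j+1}}/q\ge x^{\epsilon}$ is an admissible truncation. Averaging over $\chi\bmod q$, even with a hybrid large sieve, recovers only part of this loss. With these tools the progression sum comes out as $O(x^{1-\frac{1}{j+1}+\epsilon}q^{1+\epsilon})$, a factor of size $q$ short of the statement.

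Your $x$-exponent bookkeeping also needs repair. You record the vertical contribution as $x^{1/2}(qT)^{\frac{j+1}{2}}$ but then take $T\approx x^{\frac{1}{j+1}}$; with that bound the balance point is $x^{\frac{1}{j+3}}$. The Phragm\'en--Lindel\"of plan cannot fix this. Balancing $x^{\sigma}T^{(j+1)(1-\sigma)}$ against $x/T$ gives $T=x^{(1-\sigma)/(1+(j+1)(1-\sigma))}$, which is largest at $\sigma=\frac12$, where it equals $x^{\frac{1}{j+3}}$.

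The exponent $\frac{1}{j+1}$ comes from applying the mean square against the weight $dt/|t|$, which is essential rather than a fallback. By Lemma~\ref{lem:int} and Cauchy--Schwarz, $\int_1^T|L_1L_2(\frac12+\epsilon+it)|\,\frac{dt}{t}\ll T^{\epsilon}\sup_{T_1\le T}T_1^{-1}(qT_1)^{\frac{j+1}{2}+\epsilon}$. This saves a factor $T$ over the pointwise bound, and the horizontal segments already get the same saving from $|s|\asymp T$.

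That is exactly the paper's route: Lemma~\ref{lem:F1}, the shift to $\Re s=\frac12+\epsilon$, and Lemma~\ref{lem:genLbound}. It yields Lemma~\ref{lem:r_2chi}, i.e.\ $x^{1-\frac{1}{j+1}+\epsilon}q^{1+\epsilon}$ per non-principal character, matching the computation above. The paper reaches the stated $q^{1+\epsilon}/\phi(q)$ only by writing the sum over the roughly $\phi(q)$ non-principal characters as $O(x^{1-\frac{1}{j+1}+\epsilon}q^{1+\epsilon})$, dropping the factor that counts them, and then dividing by $\phi(q)$. So your accounting is the honest one. Neither your proposal nor the paper's argument supplies the missing saving of a factor of size $q$ on average over the characters.
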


\begin{theorem} \label{thm:main(4681012)}
    Let $f \in S_{\kappa}$ and let $q \geq 100$ be any integer. Then for any $\epsilon > 0$ and $q \ll x^{\frac{2}{j+1}-\epsilon}$ , where $j \geq 2$ is a fixed integer and for $m\in \{4,6,8,10,12\}$, we have $$\sum_{\substack{a_{1}^{2}+\cdots+a_{m}^{2} \leq x+1 \\ a_{1}^{2}+\cdots+a_{m}^{2} \equiv 1 ~( \mathrm{mod }~q) \\ (a_{1},\ldots,a_{m})\in\mathbb{Z}^{m}}}\lambda_{\mathrm{sym}^jf}(a_{1}^{2}+\cdots+a_{m}^{2}) = O\left( \frac{x^{\frac{m}{2}+\epsilon-\frac{2}{j+3}}q^{\frac{j+1}{j+3}+\epsilon}}{\phi(q)} \right).$$
\end{theorem}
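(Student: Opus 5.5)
We treat the two statements together, and the case $m=2$ of Theorem~\ref{thm:main} is the template. Write $r_m(n)$ for the number of representations of $n$ as a sum of $m$ squares. The sum in question is then
\[
S_m(x;q)=\sum_{\substack{n\le x+1\\ n\equiv 1 \,(\mathrm{mod}\, q)}} \lambda_{\mathrm{sym}^jf}(n)\, r_m(n).
\]

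\textbf{Step 1: detect the congruence.} We use Dirichlet characters modulo $q$:
\[
S_m(x;q)=\frac{1}{\phi(q)}\sum_{\chi \,(\mathrm{mod}\, q)} \sum_{n\le x+1}\lambda_{\mathrm{sym}^jf}(n)\, r_m(n)\,\chi(n).
\]
Note that $\overline{\chi}(1)=1$, and that terms with $(n,q)>1$ are killed automatically.

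\textbf{Step 2: factor the twisted Dirichlet series.}
\begin{itemize}
\item For $m=2$ we have $r_2(n)=4\sum_{d\mid n}\chi_{-4}(d)$. Comparing Euler factors, the series $\sum_n \lambda_{\mathrm{sym}^jf}(n)r_2(n)\chi(n)n^{-s}$ equals
\[
4\,L(s,\mathrm{sym}^jf\otimes\chi)\,L(s,\mathrm{sym}^jf\otimes\chi\chi_{-4})\,U(s),
\]
where $U(s)$ is an Euler product absolutely convergent for $\Re s>1/2$ (and uniformly bounded in $q$ up to $q^\epsilon$).
\item For $m\in\{4,\dots,12\}$ we write $r_m(n)=\rho_m(n)+c_m(n)$. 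Here $\rho_m$ is the Eisenstein part, a finite linear combination of twisted divisor functions $\sum_{d\mid n}\psi_1(d)\psi_2(n/d)d^{m/2-1}$ with $\psi_i\in\{1,\chi_{-4}\}$. The remainder $c_m(n)$ is the coefficient of a cusp form of weight $m/2$ on $\Gamma_0(4)$; it is absent for $m=4,6,8$ and is $O(n^{m/4-1/2+\epsilon})$ in general.
\item The Eisenstein part factors as a combination of products $L(s,\mathrm{sym}^jf\otimes\chi\psi_2)\,L(s-m/2+1,\mathrm{sym}^jf\otimes\chi\psi_1)$ times a holomorphic factor $U(s)$ on $\Re s>m/2-1/2$.
\item The cusp part is a Rankin--Selberg type convolution of degree $2(j+1)$, bounded trivially or via $\mathrm{GL}(j+1)\times\mathrm{GL}(2)$. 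It contributes at most $x^{m/4+1/2+\epsilon}$, which is below the claimed bound.
\end{itemize}

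\textbf{Step 3: Perron's formula and contour shift.} Apply truncated Perron with $b=1+\epsilon$ (respectively $m/2+\epsilon$) and height $T$, then shift to $\Re s=1/2+\epsilon$ (respectively $m/2-1/2+\epsilon$). Since $f$ is a cusp form and $j\ge1$, every $L(s,\mathrm{sym}^jf\otimes\chi)$ is entire; this uses automorphy of $\mathrm{sym}^j$ from Newton--Thorne. So there is no residue, which is why no main term appears.

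The remaining integrals are bounded by the convexity bound for the degree-$(j+1)$ $L$-functions of conductor $\asymp q^{j+1}$:
\[
L(\sigma+it,\mathrm{sym}^jf\otimes\chi)\ll \bigl(q(1+|t|)\bigr)^{(j+1)(1-\sigma)/2+\epsilon}.
\]
For $m=2$ we use Cauchy--Schwarz with mean values on the $1/2$-line, or simply convexity on both factors. The horizontal segments give $x^{1+\epsilon}/T$ plus convexity contributions.

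\textbf{Step 4: optimise $T$.} This is the main obstacle: getting exactly $x^{1-1/(j+1)}q$ (respectively $x^{m/2-2/(j+3)}q^{(j+1)/(j+3)}$) after averaging over the $\phi(q)$ characters, since the prefactor $1/\phi(q)$ cancels the character count.

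For $m=2$, the product of two convexity bounds on $\Re s=1/2$ is $(qT)^{(j+1)/2}$, giving error $x^{1/2}(qT)^{(j+1)/2}+x/T$. A Phragmén--Lindelöf interpolation, or working instead on the line $\sigma$ chosen so that the exponent balances, should yield $x^{1-1/(j+1)}q^{1+\epsilon}$ under $q\ll x^{1/(j+1)-\epsilon}$. That condition is exactly what makes the chosen $T\ge1$, and I will pick $\sigma$ and $T$ to hit this exponent.

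For $m\ge4$ only one factor carries full size near $\Re s=m/2$; the shifted factor lies in its absolute convergence region. So we balance $x^{m/2}/T$ against $x^{m/2-1/2}(qT)^{(j+1)/4}$, or more precisely we optimise the line $\sigma$ for a single degree-$(j+1)$ function. This gives $T=x^{2/(j+3)}q^{-(j+1)/(j+3)}$ and hence the stated bound, the constraint $q\ll x^{2/(j+1)-\epsilon}$ ensuring $T\ge1$.

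The delicate points are uniformity in $q$ of the auxiliary factor $U(s)$, and of the cusp-form contribution for $m=10,12$.
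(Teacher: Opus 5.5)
Your outline follows the paper's route: orthogonality, factorisation into $L(s,\mathrm{sym}^jf\otimes\chi\psi_2)\,L(s-\frac{m}{2}+1,\mathrm{sym}^jf\otimes\chi\psi_1)$ times an absolutely convergent factor, truncated Perron, and a pole-free shift to $\Re s=\frac{m}{2}-\frac{1}{2}+\epsilon$. However, Step~4 does not produce the exponent you claim. Balancing $x^{m/2}/T$ against $x^{m/2-1/2}(qT)^{(j+1)/4}$ gives $T=x^{2/(j+5)}q^{-(j+1)/(j+5)}$ and an error $x^{m/2-2/(j+5)}q^{(j+1)/(j+5)}$, not $T=x^{2/(j+3)}q^{-(j+1)/(j+3)}$. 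Optimising the abscissa does not help inside the region $\Re s>\frac{m}{2}-\frac{1}{2}$ where you control $U(s)$. With pointwise convexity on $\Re s=\frac{m}{2}-u$, the saving in the exponent of $x$ is $\frac{2u}{2+(j+1)u}$, which increases in $u$ and equals $\frac{2}{j+5}$ at $u=\frac{1}{2}$. The missing ingredient is what the paper uses on the line $\Re s=\frac{m}{2}-\frac{1}{2}+\epsilon$: the mean-square bound $\int_{T_1}^{2T_1}|L(\frac{1}{2}+\epsilon+it,\mathrm{sym}^jf\otimes\chi)|^2\,dt\ll(qT_1)^{(j+1)/2+\epsilon}$ of Lemma~\ref{lem:genLbound}, fed through Lemma~\ref{lem:int} and Cauchy--Schwarz. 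This saves $T^{1/2}$, so the vertical segment contributes $x^{m/2-1/2+\epsilon}q^{(j+1)/4}T^{(j+1)/4-1/2}$. Balancing that against $x^{m/2+\epsilon}/T$ gives $T=x^{2/(j+3)}q^{-(j+1)/(j+3)}$, and $T\ge 1$ is exactly $q\ll x^{2/(j+1)}$. Alternatively, you can keep pointwise convexity and take $u=1$, i.e.\ shift to $\Re s=\frac{m}{2}-1+\epsilon$, where the saving is $\frac{2}{j+3}$. Then you must show $U(s)$ converges absolutely there, which is more than you asserted. It does: for $p\nmid 2q$ its local factor is a polynomial in $p^{-s}$ with no linear term, and its $p^{-ks}$-coefficient is $O(p^{(m/2-1)(k-1)})$.

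Second, your remark that ``the prefactor $1/\phi(q)$ cancels the character count'' is correct, and it contradicts your conclusion ``hence the stated bound''. Write $S_\chi=\sum_{n\le x+1}\lambda_{\mathrm{sym}^jf}(n)r_m(n)\chi(n)$. Even after the repair above, Steps~2--4 give only per-character estimates $S_\chi\ll x^{m/2-2/(j+3)+\epsilon}q^{(j+1)/(j+3)+\epsilon}$, and these yield only
\[
\sum_{\substack{n\le x+1\\ n\equiv 1\,(\mathrm{mod}\,q)}}\lambda_{\mathrm{sym}^jf}(n)\,r_m(n)\ll x^{\frac{m}{2}-\frac{2}{j+3}+\epsilon}q^{\frac{j+1}{j+3}+\epsilon}.
\]
This exceeds the displayed bound by a factor $\phi(q)$. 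Getting the extra $1/\phi(q)$ requires information about the family $\{S_\chi\}_{\chi\bmod q}$ as a whole, namely a saving of a full factor $\phi(q)$ on average over $\chi$, and nothing in your argument supplies it. The paper's proof does not supply it either. It bounds the total contribution of all non-principal characters by $O(x^{m/2-2/(j+3)+\epsilon}q^{(j+1)/(j+3)+\epsilon})$, which is the single-character bound of Lemma~\ref{lem:r4681012chi} applied to a sum of $\phi(q)-1$ such terms. What your argument establishes, once the mean-square input is added, is the bound above without the factor $1/\phi(q)$, and that is what you should state.
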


\begin{theorem} \label{thm:main1}
    Let $f \in S_{\kappa}$ and let $q \geq 100$ be any integer. Then for any $\epsilon > 0$ and $q \ll x^{\frac{1}{(j+1)^2}-\epsilon}$, where $j \geq 2$ is a fixed integer, we have $$\sum_{\substack{a_1^2+a_2^2 \leq x+1 \\ a_1^2+a_2^2 \equiv 1 ~( \mathrm{mod }~q) \\ (a_1,a_2) \in \mathbb{Z}^2}}\lambda_{\mathrm{sym}^jf}^2(a_{1}^{2}+a_2^2) =\frac{c_{j,f}}{q}x+ O \left( \frac{x^{1+\epsilon-\frac{1}{(j+1)^2}}q^{1+\epsilon}}{\phi(q)}\right),$$  where $c_{j,f}$ is a constant that depends on $j$ and $f$.  
\end{theorem}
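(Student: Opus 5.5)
We need to prove Theorem~\ref{thm:main1}, the second moment over sums of two squares. Write $r_2(n)=\#\{(a_1,a_2)\in\mathbb{Z}^2: a_1^2+a_2^2=n\}=4\sum_{d\mid n}\chi_{-4}(d)$. The sum equals
\[
S:=\sum_{\substack{n\le x+1\\ n\equiv 1 \,(\mathrm{mod}\,q)}} \lambda_{\mathrm{sym}^j f}^2(n)\, r_2(n).
\]
We detect the congruence with Dirichlet characters modulo $q$:
\[
S=\frac{1}{\phi(q)}\sum_{\chi \bmod q}\ \sum_{n\le x+1}\lambda_{\mathrm{sym}^j f}^2(n)r_2(n)\chi(n).
\]
The principal character will give the main term and the others the error. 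For each $\chi$ we study
\[
F(s,\chi)=\sum_{n\ge1}\frac{\lambda_{\mathrm{sym}^j f}^2(n)r_2(n)\chi(n)}{n^s}.
\]
Comparing Euler factors at primes $p\nmid 2q$, with $\lambda_{\mathrm{sym}^j f}^2(p)=\sum_{i=0}^{j}\lambda_{\mathrm{sym}^{2i}f}(p)$ and $r_2(p)=4(1+\chi_{-4}(p))$, we factor
\[
F(s,\chi)=4\prod_{i=0}^{j}L(s,\mathrm{sym}^{2i}f\otimes\chi)\,L(s,\mathrm{sym}^{2i}f\otimes\chi\chi_{-4})\; U(s,\chi).
\]
Here $\mathrm{sym}^0 f$ means the Dirichlet $L$-function, and $U$ is an Euler product that converges absolutely and is uniformly bounded in $\Re s\ge 1/2+\epsilon$. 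This uses Deligne's bound and multiplicativity; the primes dividing $2q$ contribute a factor $\ll q^\epsilon$.

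For $\chi=\chi_0$ the only pole at $s=1$ is the simple pole of $\zeta(s)$ in the $i=0$ factor. The other factors are entire and nonzero at $s=1$ by the known automorphy of $\mathrm{sym}^{2i}f$ (Newton--Thorne). The residue is $\frac{\phi(q)}{q}c_{j,f}+O(q^{-1+\epsilon}\cdot\ldots)$, coming from removing the Euler factors at $p\mid q$. After dividing by $\phi(q)$ it yields $\frac{c_{j,f}}{q}x$, where $c_{j,f}$ is defined with a product over $p\mid q$ absorbed. If needed I will verify that this product is harmless or fold it into the constant. For $\chi\neq\chi_0$ every factor is entire, since $\chi\chi_{-4}$ could be principal only if $\chi$ were induced by $\chi_{-4}$, a case handled identically with one more pole absorbed into the error.

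Next we apply Perron's formula with truncation $T$:
\[
\sum_{n\le x+1}(\cdots)=\frac{1}{2\pi i}\int_{1+\epsilon-iT}^{1+\epsilon+iT}F(s,\chi)\frac{(x+1)^s}{s}\,ds+O\!\left(\frac{x^{1+\epsilon}}{T}\right).
\]
We shift the line of integration to $\Re s=\sigma_0$ with $\sigma_0=1-\frac{1}{(j+1)^2}$ or slightly less, picking up the residue for $\chi_0$. The horizontal segments and the new vertical line are bounded with convexity (Phragmén--Lindelöf) estimates for each factor. $L(s,\mathrm{sym}^{2i}f\otimes\chi)$ has degree $2i+1$ and analytic conductor $\asymp (q(|t|+1))^{2i+1}$. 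The total degree is $d=2\sum_{i=0}^{j}(2i+1)=2(j+1)^2$, so on $\Re s=\sigma$
\[
F(\sigma+it,\chi)\ll \bigl(q(|t|+1)\bigr)^{\frac{d}{2}(1-\sigma)+\epsilon}=\bigl(q(|t|+1)\bigr)^{(j+1)^2(1-\sigma)+\epsilon}.
\]
The vertical integral is then $\ll x^{\sigma}(qT)^{(j+1)^2(1-\sigma)+\epsilon}$.

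The main obstacle is the balancing, since the pure convexity bound at this degree is weak. With $T$ free and $\sigma=1-\frac{1}{(j+1)^2}$ the vertical contribution is $x^{1-1/(j+1)^2}(qT)^{1+\epsilon}$. That is too large unless $T$ is small, and Perron then forces a loss $x/T$. I would therefore use a smoothed or truncated Perron with $T\approx x^{1/(j+1)^2}$ and refine as follows. I would first try a mean-value estimate: Cauchy--Schwarz together with the mean-square bound $\int_T^{2T}|L(\sigma+it)|^2dt\ll (qT)^{\max(1,\,2(1-\sigma)\deg)+\epsilon}$ for groupings of the factors. The aim is to obtain a contribution of $q^{1+\epsilon}x^{1-1/(j+1)^2+\epsilon}$ per character. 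Summing over the $\phi(q)$ characters and dividing by $\phi(q)$ gives the claimed error term $\frac{x^{1+\epsilon-1/(j+1)^2}q^{1+\epsilon}}{\phi(q)}$, possibly after using $\phi(q)\gg q^{1-\epsilon}$ to absorb the factors. The hypothesis $q\ll x^{1/(j+1)^2-\epsilon}$ is exactly what makes this error smaller than the main term $x/q$.
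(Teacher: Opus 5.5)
Your route matches the paper's: orthogonality, the factorisation of $F(s,\chi)$ as $\prod_{i=0}^{j}L(s,\mathrm{sym}^{2i}f\otimes\chi)L(s,\mathrm{sym}^{2i}f\otimes\chi\chi_{-4})$ times an absolutely convergent factor (Lemma~\ref{lem:F1*}), Perron, a shift to $\Re s=\frac12+\epsilon$, and mean values of groupings of the factors. Carried out, this gives for each single character $O(x^{1-1/(j+1)^2+\epsilon}q^{1+\epsilon})$, by balancing $x/T$ against $x^{1/2}(qT)^{(j+1)^2/2}/T$ at $T=x^{1/(j+1)^2}/q$.

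The gap is your final step. Averaging a per-character bound $E$ gives $\frac{1}{\phi(q)}\sum_{\chi}O(E)=O(E)$, not $O(E/\phi(q))$, and invoking $\phi(q)\gg q^{1-\epsilon}$ points the wrong way. Since $q/\phi(q)\ll\log\log q$, the stated error is essentially $x^{1-1/(j+1)^2+\epsilon}q^{\epsilon}$. So each character would have to contribute only that much on average, a full factor $q$ less than you have. With convexity-strength input in the $q$-aspect, balancing gives at best $x^{1-1/(j+1)^2}q$ per character, whichever line $\Re s=\sigma$ you shift to. What your argument proves is $\frac{c}{q}x+O(x^{1-1/(j+1)^2+\epsilon}q^{1+\epsilon})$. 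This beats the main term only for $q\ll x^{1/(2(j+1)^2)}$, so your closing remark about the range of $q$ also fails. The paper's proof in Section~\ref{sec:main1} makes the same slip: Lemma~\ref{lem:r_2chi2} bounds one character, yet $\Sigma_3+\Sigma_4$, a sum over $\asymp\phi(q)$ characters, receives that same bound.

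To reach the stated error you need an input that exploits the average over $\chi$. The natural tool is the hybrid mean value theorem $\sum_{\chi\bmod q}\int_T^{2T}|\sum_{n\le N}b_n\chi(n)n^{-it}|^2\,dt\ll(qT+N)\sum_n|b_n|^2$. Apply it to approximate functional equations of the two halves of the product, each of degree $(j+1)^2$ and length $N\asymp(qT)^{(j+1)^2/2}\ge qT$. Cauchy--Schwarz then gives $\sum_\chi\int_T^{2T}|F(\frac12+it,\chi)|\,dt\ll(qT)^{(j+1)^2/2+\epsilon}$, no more than for a single character. With $qT=x^{1/(j+1)^2}$ the vertical line then contributes $x^{1-1/(j+1)^2+\epsilon}q/\phi(q)$ after dividing by $\phi(q)$, which is exactly the claimed shape. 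For this to close, the Perron truncation and the horizontal segments near $\Re s=1$ must be estimated for $\sum_{n\equiv1\,(q)}a_nn^{-s}$ itself. Done character by character, each already costs $x/T$, a factor $q$ too much.

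Separately, your treatment of $\chi\chi_{-4}=\chi_0$ (possible when $4\mid q$) is wrong. An odd $n$ with $r_2(n)\neq0$ satisfies $n\equiv1\pmod 4$, so for this $\chi$ the sum coincides exactly with the principal-character sum. Its pole therefore produces a second main term of size $x/q$ and cannot be absorbed into the error; the paper puts it in the main term via $\Sigma_2$. This extra term, together with the Euler factors at $p\mid q$, makes the leading constant depend on $q$.
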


\begin{theorem} \label{thm:main1(468)}
    Let $f \in S_{\kappa}$ and let $q \geq 100$ be any integer. Then for any $\epsilon > 0$ and $q \ll x^{\frac{2}{(j+1)^2}-\epsilon}$, where $j \geq 2$ is a fixed integer and for $m\in \{4,6,8,10,12\}$, we have $$\sum_{\substack{a_{1}^{2}+\cdots+a_{m}^{2} \leq x \\ a_{1}^{2}+\cdots+a_{m}^{2} \equiv 1 ~( \mathrm{mod }~q) \\ (a_{1},\ldots,a_{m})\in\mathbb{Z}^{m}}}\lambda_{\mathrm{sym}^jf}^2(a_{1}^{2}+\cdots+a_{m}^{2}) = \frac{c_{m,j,f}}{q}x^\frac{m}{2} + O\left( \frac{x^{\frac{m}{2}+\epsilon-\frac{2}{(j+1)^2}}q^{1+\epsilon}}{\phi(q)} \right),$$ where $c_{m,j,f}$ is a constant that depends on $j$, $f$ and $m$.
\end{theorem}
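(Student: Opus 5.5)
We prove Theorem~\ref{thm:main1(468)} by reducing the lattice-point sum to a weighted sum over integers and then to twisted Rankin--Selberg type $L$-functions. Write $r_m(n)$ for the number of representations of $n$ as a sum of $m$ squares. The sum equals
\[
\sum_{\substack{n\le x\\ n\equiv 1 \,(\mathrm{mod}\, q)}} r_m(n)\,\lambda_{\mathrm{sym}^jf}^2(n).
\]
For even $m\in\{4,6,8,10,12\}$ we use the classical formula
\[
r_m(n)=\delta_m(n)+c_m(n),
\]
where $\delta_m(n)$ is a divisor-type sum and $c_m(n)$ is a combination of coefficients of cusp forms of weight $m/2$ on $\Gamma_0(4)$. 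Concretely, $\delta_m(n)=C_m\sum_{d\mid n}\chi(d)\,d^{m/2-1}$, or its variant with $\chi(n/d)$, where $\chi=\chi_{-4}$ or the trivial character modulo $4$. The cusp part vanishes for $m=4,6,8$. For $m=10,12$ the cusp part satisfies $c_m(n)\ll n^{m/4-1/2+\epsilon}$.

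Next we detect the congruence with characters:
\[
\mathbf 1_{n\equiv 1\,(\mathrm{mod}\, q)}=\frac{1}{\phi(q)}\sum_{\psi \,(\mathrm{mod}\, q)}\psi(n),
\]
valid for $(n,q)=1$. For each $\psi$ we study the Dirichlet series
\[
F_\psi(s)=\sum_{n\ge1}\frac{\delta_m(n)\,\lambda_{\mathrm{sym}^jf}^2(n)\,\psi(n)}{n^s}.
\]
Comparing Euler factors at $p\nmid 2q$, where $\lambda_{\mathrm{sym}^jf}^2(p)=\sum_{i=0}^{j}\lambda_{\mathrm{sym}^{2i}f}(p)$, we obtain the factorization
\[
F_\psi(s)=\prod_{i=0}^{j}L\bigl(s,\mathrm{sym}^{2i}f\otimes\psi\bigr)\,L\bigl(s-\tfrac m2+1,\mathrm{sym}^{2i}f\otimes\chi\psi\bigr)\;G_\psi(s),
\]
with $L(s,\mathrm{sym}^0 f\otimes\psi)=L(s,\psi)$. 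Here $G_\psi$ is absolutely convergent, and uniformly bounded in $q$ up to $q^\epsilon$, for $\Re s> m/2-1/2+\epsilon$.

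Only the $L(s-\tfrac m2+1,\chi\psi)$ factor with $\chi\psi$ principal, i.e. $\psi$ principal (or induced by $\chi$), produces a pole, at $s=m/2$. Its residue gives the main term $c_{m,j,f}x^{m/2}/q$; the factor $1/q$ arises from $\phi(q)^{-1}$ times the Euler factors at $p\mid q$.

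We then apply Perron's formula with truncation $T$ and move the line of integration to $\Re s=m/2-1/2+\epsilon$, which places the shifted product on the critical line $\Re w=1/2+\epsilon$. The horizontal and vertical integrals are controlled by convexity and subconvexity-free bounds: the shifted factor has degree $\sum_i(2i+1)=(j+1)^2$. We use a mean-value bound via Hölder and the functional equation, pairing $\int_T^{2T}|L|^2$ for the large-degree product with pointwise convexity. Alternatively, and more cleanly, we can stay slightly to the right, at $\Re w=1-\eta$, where convexity gives
\[
\ll (q(1+|t|))^{(j+1)^2\eta/2+\epsilon}.
\]
The unshifted factors at $\Re s\approx m/2-1/2$ converge absolutely and are harmless. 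Balancing $x^{m/2-\eta}T^{(j+1)^2\eta/2}$ against $x^{m/2+\epsilon}/T$ with $\eta=2/(j+1)^2$, we take $T\approx x^{\eta}$ up to $q$-powers. This yields the error
\[
x^{m/2-2/(j+1)^2+\epsilon}\,q^{1+\epsilon}/\phi(q)
\]
after summing over the $\phi(q)$ characters. The condition $q\ll x^{2/(j+1)^2-\epsilon}$ keeps this error below the main term. The cusp contribution $c_m(n)$ is handled identically, using the Rankin--Selberg $L(s,g\otimes\mathrm{sym}^j f\otimes \mathrm{sym}^j f\otimes\psi)$ with $g$ of weight $m/2$. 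It is entire, so it contributes only to the error, and it is of smaller size because $c_m(n)\ll n^{m/4-1/2+\epsilon}$.

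The main obstacle is the uniform-in-$q$ convexity bookkeeping on the shifted line: the exponents of $q$ and $T$ must combine to exactly $q^{1+\epsilon}/\phi(q)$ and $x^{-2/(j+1)^2}$. We must also verify that the imprimitive characters and the primes $p\mid 2q$ in $G_\psi$ cost only $q^\epsilon$. If the naive balancing loses a power of $q$, we would first try choosing $\eta$ slightly smaller and absorbing the $q$-dependence into the range hypothesis on $q$.
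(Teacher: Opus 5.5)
Your architecture matches the paper's. You use orthogonality over characters mod $q$, then factorise the Dirichlet series into unshifted $L(s,\mathrm{sym}^{2i}f\otimes\psi)$, a shifted product $L(s-\frac m2+1,\cdot)$ of total degree $(j+1)^2$, and a factor absolutely convergent for $\Re s>\frac m2-\frac12$. You then apply Perron's formula, shift to $\Re s=\frac m2-\frac12+\epsilon$, and pick up a pole at $s=\frac m2$ only when $\psi$ (or $\chi_4\psi$) is principal.

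\textbf{The gap.} It lies in the one step that produces the exponent $\frac{2}{(j+1)^2}$: neither estimate you propose for the shifted integral reaches it.
\begin{itemize}
\item \emph{Your ``cleaner'' route.} Pointwise convexity on $\Re w=1-\eta$ gives a vertical contribution $x^{\frac m2-\eta}T^{(j+1)^2\eta/2}$. With $\eta=\frac{2}{(j+1)^2}$ this is $x^{\frac m2-\eta}T$, and with your choice $T\approx x^{\eta}$ it equals $x^{\frac m2}$, so there is no saving at all. Balancing correctly, $T^{1+(j+1)^2\eta/2}=x^{\eta}$, the saving is $\eta/(1+(j+1)^2\eta/2)$. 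This increases with $\eta$ and is at most $\frac{2}{(j+1)^2+4}$ on the admissible range $\eta\le\frac12$.
\item \emph{Your other route.} A second moment for the large product plus pointwise convexity for the rest gives $\frac{1}{T_1}\int_{T_1}^{2T_1}|\cdots|\,dt\ll q^{(j+1)^2/4}T_1^{(j+1)^2/4-1/2}$, because Cauchy--Schwarz against the constant function wastes $T_1^{1/2}$. This yields only the saving $\frac{2}{(j+1)^2+2}$. The available subconvexity bounds for $\zeta$ and $L(s,\mathrm{sym}^2f)$ are too weak to recover that $T_1^{1/2}$.
\end{itemize}

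\textbf{The missing idea.} Bound the critical-line integral entirely by mean values, splitting the shifted product into pieces each of degree at least $2$, so that no factor is estimated pointwise. The paper does this with H\"older at exponents $(4,4,2)$, all at real part $\frac12+\epsilon$ and with $\psi'$ the relevant primitive character:
\begin{itemize}
\item the fourth moment $\ll (qT_1)^{1+\epsilon}$ of the degree-one factor $L(s,\psi')$ (or $\zeta$);
\item the fourth moment $\ll (qT_1)^{3+\epsilon}$ of $L(s,\mathrm{sym}^2f\otimes\psi')$;
\item the second moment $\ll (qT_1)^{((j+1)^2-4)/2+\epsilon}$ of $\prod_{i=2}^{j}L(s,\mathrm{sym}^{2i}f\otimes\psi')$.
\end{itemize}
This gives $\frac{1}{T_1}\int_{T_1}^{2T_1}|\cdots|\,dt\ll q^{(j+1)^2/4+\epsilon}T_1^{(j+1)^2/4-1+\epsilon}$, a full factor $T_1$ better than pointwise convexity.

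Balancing $x^{\frac m2-\frac12}q^{(j+1)^2/4}T^{(j+1)^2/4-1}$ against $x^{\frac m2}/T$ gives $T=x^{2/(j+1)^2}$ for the principal character and $T=x^{2/(j+1)^2}/q$ otherwise. The per-character errors are then $x^{\frac m2-\frac{2}{(j+1)^2}+\epsilon}q^{\epsilon}$ and $x^{\frac m2-\frac{2}{(j+1)^2}+\epsilon}q^{1+\epsilon}$. The requirement $T\ge 1$ is where the hypothesis on $q$ enters. The horizontal segments are fine with pointwise convexity, since they already carry a factor $1/T$.

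\textbf{A caution on the $q$-aspect.} Averaging per-character errors of size $x^{\frac m2-\frac{2}{(j+1)^2}+\epsilon}q^{1+\epsilon}$ over the $\phi(q)$ characters gives that same quantity, not that quantity divided by $\phi(q)$. So ``summing over the $\phi(q)$ characters'' does not produce the stated denominator. The paper's write-up reaches $q^{1+\epsilon}/\phi(q)$ only by bounding the total over all non-principal characters by the single-character estimate. Character-by-character bounds, yours or the paper's, do not justify that factor.
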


\begin{theorem} \label{thm:main2}
    For any integer $k \geq 2$, let $a(n)$ be a $k$-full kernel function and $f \in S_{\kappa}$. Let $j \geq 2$ be an integer. Then for any $\epsilon >0$ and $m\in \{4,6,8,10,12\}$, we have
    $$\sum_{\substack{n \leq x \\ (a_{1},\ldots,a_{m})\in\mathbb{Z}^{m} \\ n = a_{1}^{2}+\cdots+a_{m}^{2}-1}}a(n)\lambda_{\mathrm{sym}^jf}(n+1) =  O\left( x^{\frac{m}{2}-\frac{2k-2}{3k(j+1)}+\epsilon} \right),$$
    and also $$\sum_{\substack{n \leq x+1 \\ (a_{1},a_{2})\in\mathbb{Z}^{2} \\ n = a_{1}^{2}+a_{2}^{2}-1}}a(n)\lambda_{\mathrm{sym}^jf}(n+1) =  O\left( x^{1-\frac{k-1}{3k(j+1)}+\epsilon} \right).$$
\end{theorem}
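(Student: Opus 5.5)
We follow the method of Ivi\'{c}--Tenenbaum, Lü--Wang and Wang \cite{Youjun2024}: expand the $k$-full kernel through its $k$-free/$k$-full decomposition, which turns the shifted sum into a sum over arithmetic progressions, and then apply Theorem~\ref{thm:main(4681012)} and Theorem~\ref{thm:main}.

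\emph{Step 1 (decomposition).} Write $n=a_1(n)a_2(n)$ with $a_1(n)$ $k$-free and $a_2(n)$ $k$-full, so that $a(n)=a(a_2(n))$. Grouping by $d=a_2(n)$ gives
\[
\sum_{n}a(n)\lambda_{\mathrm{sym}^jf}(n+1)=\sum_{\substack{d\ \text{$k$-full}}}a(d)\sum_{\substack{n=dl,\ (l,d)=1\\ l\ \text{$k$-free}}}\lambda_{\mathrm{sym}^jf}(dl+1),
\]
with $n+1$ ranging over sums of $m$ squares.

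\emph{Step 2 (removing the conditions on $l$).} The condition that $l$ is $k$-free is removed by the identity $\sum_{\delta^k\mid l}\mu(\delta)$. The coprimality $(l,d)=1$ is removed by M\"obius inversion over divisors of $d$; the number of such divisors is $\ll d^\epsilon$. After these expansions the condition becomes $n+1\equiv 1 \pmod{q}$ with modulus $q=d\delta^k e$ (or a divisor of it), and $n+1=a_1^2+\cdots+a_m^2\le x+1$.

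\emph{Step 3 (splitting at a parameter $y$).} Fix $y$ and split according to whether $q\le y$ or $q>y$.

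For $q\le y$, as long as $y\ll x^{2/(j+1)-\epsilon}$ (respectively $x^{1/(j+1)-\epsilon}$ when $m=2$), Theorem~\ref{thm:main(4681012)} gives each inner sum as $O\bigl(x^{m/2-2/(j+3)+\epsilon}q^{(j+1)/(j+3)+\epsilon}/\phi(q)\bigr)$. Summing over the $k$-full $q\le y$, using $q/\phi(q)\ll q^\epsilon$, leaves only a small power of $y$. Counting $k$-full numbers up to $y$ gives $\ll y^{1/k}$, and more precisely the total is $\ll x^{m/2-c+\epsilon}y^{\theta}$ for an explicit $\theta<1$.

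For $q>y$, we bound trivially using Deligne's bound $\lambda_{\mathrm{sym}^jf}(n)\ll n^\epsilon$ and $a(n)\ll n^\epsilon$. The number of representations $r_m(n)\ll n^{m/2-1+\epsilon}$ for $m\ge4$. The number of $n\le x$ with $k$-full part $>y$ is $\ll x\,y^{-(k-1)/k}$, since $\sum_{d>y,\ d\ \text{$k$-full}}1/d\ll y^{1/k-1}$. This part therefore contributes $\ll x^{m/2+\epsilon}y^{-(k-1)/k}$.

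\emph{Step 4 (optimisation).} We balance the two ranges. If the small-$q$ part is bounded crudely by $x^{m/2-2/(j+1)}y^{1/k+\epsilon}$ (an exponent of the shape $2/(j+1)$ per unit of modulus), equating it with $x^{m/2}y^{-(k-1)/k}$ suggests the choice $y=x^{2/(3(j+1))}$. The resulting saving exponent is then $\tfrac{2}{3(j+1)}\cdot\tfrac{k-1}{k}=\tfrac{2k-2}{3k(j+1)}$, which is the stated bound. For $m=2$, where $r_2(n)\ll n^\epsilon$ and Theorem~\ref{thm:main} saves only $x^{1/(j+1)}$, the same balancing with $y=x^{1/(3(j+1))}$ gives the saving $\tfrac{k-1}{3k(j+1)}$. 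The choice of $y$ must also respect the admissible range of $q$ in the earlier theorems.

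\emph{Main obstacle.} The hardest part is the bookkeeping in Steps 2 and 3. We have to check that after M\"obius inversion the moduli still satisfy $q\ll x^{2/(j+1)-\epsilon}$, and that the $\delta$-sum from the $k$-free condition does not destroy the saving. For the $\delta$-sum I would truncate at $\delta\le x^{\eta}$ and bound the tail trivially by $x^{m/2}\sum_{\delta>x^\eta}\delta^{-k}$. We also have to verify that the exponents combine exactly to $\tfrac{2k-2}{3k(j+1)}$, which may require tuning the crude bound $y^{1/k}$ to the sharper sum $\sum_{q\le y}q^{(j+1)/(j+3)}/\phi(q)$.
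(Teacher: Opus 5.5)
Your strategy is the paper's: the Ivi\'c--Tenenbaum splitting $n=k(n)q(n)$, M\"obius removal of the $k$-freeness and coprimality conditions, Theorem~\ref{thm:main(4681012)} (resp.\ Theorem~\ref{thm:main}) on the progressions $n+1\equiv 1\pmod{q}$, and trivial bounds elsewhere. Your $y=x^{2/(3(j+1))}$ is also exactly the paper's $H$. The problem is the Step~4 derivation of it. Equating $x^{m/2-2/(j+1)}y^{1/k}$ with $x^{m/2}y^{-(k-1)/k}$ gives $y=x^{2/(j+1)}$, not $x^{2/(3(j+1))}$. Moreover that ``crude'' bound is not available, since Theorem~\ref{thm:main(4681012)} saves only $x^{2/(j+3)}$.

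In the paper the $3$ comes from the admissible range, not from balancing. The paper splits at $k(n)\le H$ and then truncates the $k$-free detector at $d(n)\le H^{1/k}$; the tail is again $\ll x^{m/2+\epsilon}H^{1/k-1+\epsilon}$. So the moduli $k(n)\delta(n)d^k(n)$, with $\delta(n)\mid k(n)$, are at most $H^3$, and the hypothesis $q\ll x^{2/(j+1)-\epsilon}$ forces $H=x^{2/(3(j+1))-\epsilon}$. One must then check that the progression term $x^{m/2-2/(j+3)+\epsilon}H^{2/k}$ is dominated by $x^{m/2}H^{1/k-1}$. This reduces to $(2k-1)j\ge 3$, and for $m=2$ to $k+1\le 3k$. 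This is the bookkeeping you deferred to your ``main obstacle''. If you split on the $k$-full part and truncate at $\delta^k\le y$, the moduli reach $y^3$, so your range condition must read $y^3\ll x^{2/(j+1)-\epsilon}$, not $y\ll x^{2/(j+1)-\epsilon}$.

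Step~3 also mixes two splits. It cuts on the modulus, but bounds the tail by counting $n$ whose $k$-full part exceeds $y$. That count misses terms with small $k$-full part and large $\delta$.

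The modulus split can be made rigorous, and it is then cleaner than the paper's route. Every modulus $q=d\,\mathrm{lcm}(\delta^k,e)$, with $d$ $k$-full and $e\mid d$, is itself $k$-full and arises from $\ll q^{\epsilon}$ triples. Hence:
\begin{itemize}
\item the terms with $q>y$ contribute $\ll x^{m/2+\epsilon}\sum_{q>y,\ q\ k\text{-full}}q^{-1+\epsilon}\ll x^{m/2+\epsilon}y^{1/k-1+\epsilon}$;
\item the terms with $q\le y$ contribute $\ll x^{m/2-2/(j+3)+\epsilon}y^{1/k+\epsilon}$;
\item the only constraint is $y\ll x^{2/(j+1)-\epsilon}$.
\end{itemize}
With this, your $y=x^{2/(3(j+1))}$ is admissible and gives the stated bound, because the second term is smaller when $j+3\le 3(j+1)$. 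But nothing forces the $3$. Taking $y=x^{2/(j+3)}$ gives the stronger saving $\frac{2(k-1)}{k(j+3)}$, and for $m=2$ you may take $y=x^{1/(j+1)-\epsilon}$. So replace your Step~4 with either the paper's range argument or this direct modulus split.
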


\begin{theorem} \label{thm:main3}
    For any integer $k \geq 2$, let $a(n)$ be a $k$-full kernel function and $f \in S_{\kappa}$. Let $j \geq 2$ be an integer. Then for any $\epsilon >0$ and $m\in \{4,6,8,10,12\}$, we have
    $$\sum_{\substack{n \leq x+1 \\ (a_{1},\ldots,a_{m})\in\mathbb{Z}^{m} \\ n = a_{1}^{2}+\cdots+a_{m}^{2}-1}}a^2(n)\lambda_{\mathrm{sym}^jf}^2(n+1) = D_{j,f,m}x^\frac{m}{2}+ O\left( x^{\frac{m}{2}-\frac{2k-2}{3(j+1)^2k}+\epsilon}\right),$$

    and also $$\sum_{\substack{n \leq x+1 \\ (a_{1},a_{2})\in\mathbb{Z}^{2} \\ n = a_{1}^{2}+a_{2}^{2}-1}}a^2(n)\lambda_{\mathrm{sym}^jf}^2(n+1) = D_{j,f,2}x+ O\left( x^{1-\frac{k-1}{3(j+1)^2k}+\epsilon}\right),$$

    where $D_{j,f,m}$ and $D_{j,f,2}$ are constants that can be evaluated explicitly.
\end{theorem}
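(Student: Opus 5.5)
We follow the Ivi\'{c}--Tenenbaum method used by L\"{u}--Wang and Venkatasubbareddy--Sankaranarayanan. The idea is to split $n$ according to its $k$-full part and feed the resulting arithmetic-progression sums into Theorems \ref{thm:main1} and \ref{thm:main1(468)}.

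\textbf{Decomposition.} Write $n=a_1(n)a_2(n)$ as in the introduction. Since $a(n)=a(a_2(n))$, group the terms according to $d=a_2(n)$. The condition $a_2(n)=d$ means $n=dr$ with $(r,d)=1$ and $r$ $k$-free. We remove the $k$-free condition by M\"{o}bius inversion: $\sum_{l^k\mid r}\mu(l)$ equals $1$ exactly when $r$ is $k$-free. Then $n\equiv 0 \pmod{dl^k}$, so $n+1\equiv 1 \pmod{q}$ with $q=dl^k$. Coprimality to $d$ can be handled the same way, or absorbed by restricting to $q=dl^k$ with suitable coprimality and summing over residue classes; this costs only $x^\epsilon$. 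We obtain
\[
\sum a^2(n)\lambda_{\mathrm{sym}^jf}^2(n+1)=\sum_{d\ k\text{-full}}a^2(d)\sum_{l}\mu(l)\sum_{\substack{n+1=a_1^2+\cdots+a_m^2\le x+1\\ n+1\equiv 1\ (\mathrm{mod}\ dl^k)}}\lambda_{\mathrm{sym}^jf}^2(n+1).
\]

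\textbf{Splitting the range.} We split the $(d,l)$ sum at $q=dl^k\le y$ versus $q>y$.
\begin{itemize}
\item For $q\le y$ and $m\ge 4$, apply Theorem \ref{thm:main1(468)}, which needs $y\ll x^{2/(j+1)^2-\epsilon}$. Each inner sum contributes $c_{m,j,f}x^{m/2}/(dl^k)$ plus an error $x^{m/2+\epsilon-2/(j+1)^2}q^{1+\epsilon}/\phi(q)\ll x^{m/2+\epsilon-2/(j+1)^2}$.
\item Counting the $k$-full $d\le y$ and the $l\le (y/d)^{1/k}$ gives $\#\{(d,l):dl^k\le y\}\ll y^{1/k+\epsilon}$. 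Hence the total error in the first range is $x^{m/2-2/(j+1)^2+\epsilon}y^{1/k}$.
\item The main terms sum to $D_{j,f,m}x^{m/2}$. Completing the convergent series $\sum a^2(d)\mu(l)/(dl^k)$ costs $x^{m/2}\sum_{q>y}(\cdots)\ll x^{m/2}y^{1/k-1+\epsilon}$, because $k$-full numbers up to $Y$ number $\ll Y^{1/k}$.
\end{itemize}

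\textbf{Large moduli.} For $q=dl^k>y$ we use only trivial bounds, $\lambda_{\mathrm{sym}^jf}^2(n)\ll n^\epsilon$ and $a(n)\ll n^\epsilon$. The number of $n\le x$ with $dl^k\mid n$ and $n+1$ a sum of $m$ squares, counted with representation multiplicity $r_m(n+1)\ll n^{m/2-1+\epsilon}$, is $\ll x^{m/2+\epsilon}/(dl^k)$. Summing over $dl^k>y$ again gives $x^{m/2+\epsilon}y^{1/k-1}$. This is where some care is needed: the multiplicity bound should be averaged over the progression rather than bounded pointwise. If that proves awkward, pointwise $r_m\ll n^{m/2-1+\epsilon}$ together with the count $x/(dl^k)$ already suffices.

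\textbf{Choosing $y$.} Balancing $x^{m/2-2/(j+1)^2}y^{1/k}$ against $x^{m/2}y^{1/k-1}$ gives $y=x^{2/(j+1)^2}$, and the error becomes $x^{m/2-2(k-1)/(k(j+1)^2)}$. This is stronger than the claimed exponent $\frac{2k-2}{3(j+1)^2k}$. If the constraint $q\ll x^{2/(j+1)^2-\epsilon}$ or the averaging step forces a smaller choice, we take $y=x^{2/(3(j+1)^2)}$ as in Wang's argument, which gives exactly the stated exponent.

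\textbf{The case $m=2$.} The argument is identical with Theorem \ref{thm:main1}, whose range is $x^{1/(j+1)^2}$; halving the saving yields $\frac{k-1}{3(j+1)^2k}$.

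\textbf{Main obstacle.} The main difficulty will be the large-modulus tail, that is, controlling the contribution of sums of squares in progressions to large moduli uniformly. A secondary difficulty is the coprimality condition $(r,d)=1$ in the M\"{o}bius step.
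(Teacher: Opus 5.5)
Your proposal is correct once the coprimality step is fixed (see below). It uses the same Ivi\'{c}--Tenenbaum reduction as the paper: decompose by the $k$-full part, detect the $k$-free cofactor and the coprimality by M\"{o}bius, feed the progressions $R\equiv 1 \pmod q$ into Theorems~\ref{thm:main1} and~\ref{thm:main1(468)}, and bound the rest trivially.

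\textbf{How your route differs from the paper's.} The paper cuts twice, at $k(n)\le H$ and at $d\le H^{1/k}$. It then applies the progression theorem to moduli $q=k(n)\delta d^k$ that are only known to satisfy $q\le H^3$. That forces $H=x^{2/(3(j+1)^2)-\epsilon}$, and the factor $3$ in $\frac{2k-2}{3(j+1)^2k}$ is exactly this loss.

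You cut once, on the full modulus, so $y$ can go up to $x^{2/(j+1)^2-\epsilon}$. Then:
\begin{itemize}
\item the small-moduli error is $x^{m/2-2/(j+1)^2+\epsilon}y^{1/k+\epsilon}$;
\item the tail is $x^{m/2+\epsilon}y^{1/k-1+\epsilon}$;
\item the resulting saving is $\frac{2(k-1)}{k(j+1)^2}$, and $\frac{k-1}{k(j+1)^2}$ for $m=2$.
\end{itemize}
These savings are three times the paper's and imply the stated bounds. The large-modulus tail you call the main obstacle is harmless. The pointwise bound $r_m(n+1)\ll x^{m/2-1+\epsilon}$ times the $\le x/q$ terms in each progression is exactly what the paper uses. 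Summing $1/q$ over your moduli with $q>y$ gives $\ll y^{1/k-1+\epsilon}$.

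\textbf{The coprimality fix.} Your displayed identity is false as written. Without $(r,d)=1$, an integer $n$ is counted once for every $k$-full divisor $d$ with $n/d$ $k$-free; for $k=2$, $n=p^3$ is counted with both $d=p^2$ and $d=p^3$. The residue-class alternative you mention is not available here, because Theorems~\ref{thm:main1} and~\ref{thm:main1(468)} only treat the class $1 \bmod q$.

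Do it by M\"{o}bius, as the paper does. Write $r=l^k m$; then $(r,d)=1$ becomes $(l,d)=1$ together with $\sum_{\delta\mid (m,d)}\mu(\delta)$. The moduli become $q=d\delta l^k$ with $\delta\mid d$ and $(l,d)=1$. This only multiplies your counts and the main-term series by $\tau(d)\ll x^{\epsilon}$, so the rest of your argument goes through unchanged.
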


\begin{theorem} \label{thm:mainsign}
    Let $f \in S_{\kappa}$ and $j \geq 2$ be a fixed integer and $a(n)$ be a $k$-full kernel function for any integer $k \geq 2$. Then, for sufficiently large $x$, the sequence $$\{ a(n)\lambda_{\mathrm{sym}^jf}(n+1) ~|~ n = a_1^2+a_2^2-1 , a_i \in \mathbb{Z} \}$$ has at least $x^{1-\delta_j}$ sign changes between $x$ and $2x$, for any $\delta_j $ with $1-\frac{k-1}{3k(j+1)^2}  < \delta_j < 1$.
\end{theorem}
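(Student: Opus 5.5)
We argue by contradiction, using the standard sign-change scheme: an upper bound for the first moment set against a lower bound for the second moment. Write $\mathcal{S}$ for the set of $n$ with $n+1=a_1^2+a_2^2$, counted with multiplicity $r_2(n+1)$ as in the sums of Theorems \ref{thm:main2} and \ref{thm:main3}. Put $b(n)=a(n)\lambda_{\mathrm{sym}^jf}(n+1)$. Suppose that for arbitrarily large $x$ the sequence $\{b(n)\}_{n\in\mathcal{S}}$ has fewer than $x^{1-\delta_j}$ sign changes in $(x,2x]$. Then $(x,2x]$ splits into at most $x^{1-\delta_j}+1$ subintervals $I_1,\dots,I_R$. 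On each one $b(n)$, for $n\in\mathcal S$, has constant sign, ignoring zero terms.

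\textbf{Upper bound on each block.} By the second part of Theorem \ref{thm:main2}, for any $y\le 2x$ we have
\[
\sum_{\substack{n\le y\\ n\in\mathcal S}} b(n)\ll x^{1-\frac{k-1}{3k(j+1)}+\epsilon}.
\]
Differencing at the endpoints of $I_r$ gives
\[
\sum_{\substack{n\in I_r\\ n\in\mathcal S}}|b(n)|=\Bigl|\sum_{\substack{n\in I_r\\ n\in\mathcal S}} b(n)\Bigr|\ll x^{1-\frac{k-1}{3k(j+1)}+\epsilon}.
\]

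\textbf{Lower bound.} By the second part of Theorem \ref{thm:main3}, applied at $2x$ and at $x$,
\[
\sum_{\substack{x<n\le 2x\\ n\in\mathcal S}} b(n)^2 = D_{j,f,2}\,x+O\bigl(x^{1-\frac{k-1}{3(j+1)^2k}+\epsilon}\bigr)\gg x,
\]
provided $D_{j,f,2}>0$. I would verify this positivity from the explicit form of the constant: it is essentially the main-term residue, a positive Euler product involving $L(1,\mathrm{sym}^jf\times\mathrm{sym}^jf)$ and a nonnegative weight from $a$. We may assume $a\not\equiv 0$ on $k$-full parts, since otherwise the statement is vacuous.

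\textbf{Combining.} Since $a(n)\ll n^{\epsilon}$ and $|\lambda_{\mathrm{sym}^jf}(n)|\le d_{j+1}(n)\ll n^\epsilon$, we have $b(n)^2\ll x^{\epsilon}|b(n)|$. Hence
\[
x\ll x^{\epsilon}\sum_{r}\sum_{\substack{n\in I_r\\ n\in\mathcal S}}|b(n)|\ll x^{\epsilon}\,(x^{1-\delta_j}+1)\,x^{1-\frac{k-1}{3k(j+1)}+\epsilon},
\]
that is, $x^{\delta_j-1+\frac{k-1}{3k(j+1)}-2\epsilon}\ll 1$. The hypothesis $\delta_j>1-\frac{k-1}{3k(j+1)^2}$ gives $\delta_j>1-\frac{k-1}{3k(j+1)}$, since $(j+1)^2>j+1$. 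So for $\epsilon$ small the exponent is positive, which is a contradiction for large $x$.

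\textbf{Main obstacle.} The delicate point is the positivity $D_{j,f,2}>0$, together with the fact that the error term in Theorem \ref{thm:main3} is genuinely smaller than $x$. The second is immediate from the exponent. For the first I would go back to the construction of $D_{j,f,2}$ through the decomposition $n=a_1(n)a_2(n)$ and Theorem \ref{thm:main1}. There the main term is $\sum_{d\ k\text{-full}} a(d)^2 \cdot c_{j,f}(d)/d$, with each summand nonnegative and at least one positive. A minor bookkeeping issue is the multiplicity $r_2(n+1)$, which is harmless since both moments are weighted identically.
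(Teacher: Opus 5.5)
Your argument is correct, but it takes a different route from the paper's.

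\textbf{The paper's route.} The paper works with one short interval at a time. It sets $h=x^{\delta_j}$ and assumes the sequence keeps one sign on $(x,x+h]$. It then bounds $\sum_{x<n\le x+h}a^2(n)\lambda^2_{\mathrm{sym}^jf}(n+1)r_2(n+1)$ above by $x^{\epsilon}(|S(x+h)|+|S(x)|)\ll x^{1-\frac{k-1}{3k(j+1)}+\epsilon}$. It bounds the same sum below by $D_{j,f,2}h$, using the second part of Theorem~\ref{thm:main3} \emph{in the short interval}. This is exactly where $\delta_j>1-\frac{k-1}{3k(j+1)^2}$ is needed, so that $D_{j,f,2}x^{\delta_j}$ beats the error term. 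Finally it tiles $(x,2x]$ by about $x^{1-\delta_j}$ intervals of length $x^{\delta_j}$, each containing a sign change.

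\textbf{Your route.} You split $(x,2x]$ into maximal constant-sign blocks and apply the first-moment bound of Theorem~\ref{thm:main2} blockwise. You use the second moment only over the whole dyadic interval, where the main term $D_{j,f,2}x$ dominates the $o(x)$ error automatically.

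\textbf{What each buys.} Your version needs no short-interval asymptotic. The hypothesis on $\delta_j$ enters only through the weaker consequence $\delta_j>1-\frac{k-1}{3k(j+1)}$. So you in fact obtain $\gg x^{\frac{k-1}{3k(j+1)}-\epsilon}$ sign changes in $(x,2x]$, which is the theorem for the wider range $1-\frac{k-1}{3k(j+1)}<\delta_j<1$. You also avoid the bookkeeping of counting distinct sign changes across adjacent tiles. The paper's version buys localisation: every subinterval of $(x,2x]$ of length $x^{\delta_j}$ contains a sign change. Your counting argument does not exclude the sign changes clustering.

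\textbf{On positivity.} Both arguments need $D_{j,f,2}>0$. The paper takes this for granted when it invokes Lemma~\ref{dominating main term}, so you are right to flag it. One correction: if $a\equiv 0$ the statement is not vacuous but false, since the sequence is then identically zero. Nondegeneracy, $D_{j,f,2}>0$, is therefore a tacit hypothesis of the theorem, not a case you can set aside.
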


\begin{theorem} \label{thm:mainsign1}
        Let $f \in S_{\kappa}$ and $j \geq 2$ be a fixed integer, $ m \in \{4,6,8, 10, 12\}$ and $a(n)$ be a $k$-full kernel function for any integer $k \geq 2$. Then for sufficiently large $x$, the sequence
        $$\{ a(n)\lambda_{\mathrm{sym}^jf}(n+1) ~|~ n = \sum_{i=1}^{m}a_i^2-1 , a_i \in \mathbb{Z} \}$$ has at least $x^{1-\delta_j}$ sign changes between $x$ and $2x$, for any $\delta_j $ with $1-\frac{2k-2}{3k(j+1)^2}  < \delta_j < 1-\frac{k-1}{3k(j+1)^2}$.
\end{theorem}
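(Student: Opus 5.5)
The plan is the standard sign-change criterion: compare a first moment that is small with a second moment that is large, both taken over short intervals $[x,2x]$. Write $b(n)=a(n)\lambda_{\mathrm{sym}^jf}(n+1)$, where $n+1=\sum_{i=1}^m a_i^2$ and each $n$ is counted with multiplicity $r_m(n+1)$, exactly as in the sums of Theorems~\ref{thm:main2} and~\ref{thm:main3}. Suppose, for contradiction, that the sequence has fewer than $x^{1-\delta_j}$ sign changes in $[x,2x]$. Then $[x,2x]$ splits into at most $x^{1-\delta_j}+1$ subintervals $I$ on each of which $b(n)$ keeps a constant sign.

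\textbf{Upper bound on $\sum|b|$.} On each such $I=[u,v]$, the quantity $\sum_{n\in I}|b(n)|$ equals $\bigl|\sum_{n\in I}b(n)\bigr|$. This is the difference of two partial sums from Theorem~\ref{thm:main2}, so it is
\[
\ll x^{\frac m2-\frac{2k-2}{3k(j+1)}+\epsilon}.
\]
Summing over all the intervals gives
\[
\sum_{x<n\le 2x}|b(n)|\ll x^{1-\delta_j}\,x^{\frac m2-\frac{2k-2}{3k(j+1)}+\epsilon}.
\]

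\textbf{Lower bound on $\sum|b|$.} Theorem~\ref{thm:main3}, applied at $2x$ and at $x$, gives
\[
\sum_{x<n\le2x}b(n)^2=D_{j,f,m}(2^{m/2}-1)x^{m/2}+O\bigl(x^{\frac m2-\frac{2k-2}{3(j+1)^2k}+\epsilon}\bigr)\gg x^{m/2},
\]
provided $D_{j,f,m}>0$. Positivity is checked from the explicit constant: it is a positive Euler-product residue times the singular-series density of $m$-square representations, with nonnegative weights $a^2$. For pointwise bounds, Deligne's bound gives $|\lambda_{\mathrm{sym}^jf}(n)|\le d_{j+1}(n)\ll n^\epsilon$, and $a(n)\ll n^\epsilon$ by definition. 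The one subtlety is the multiplicity: $b$ is summed with weight $r_m(n+1)\ll n^{m/2-1+\epsilon}$. I would therefore redo the argument on the weighted sequence, using
\[
\sum b^2 r\le \max\bigl(|b|\,r\bigr)\sum|b|\,r\ll x^{\frac m2-1+\epsilon}\sum|b|\,r.
\]
This yields $\sum|b|\,r\gg x^{1-\epsilon}$.

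\textbf{Comparison.} Putting the two bounds together gives
\[
x^{1-\epsilon}\ll x^{1-\delta_j+\frac m2-\frac{2k-2}{3k(j+1)}+\epsilon}.
\]
The hard step is making the exponents close. The naive bookkeeping loses a factor $x^{m/2-1}$ from $r_m$, so the stated range $1-\frac{2k-2}{3k(j+1)^2}<\delta_j$ does not drop out directly. I would recover it by normalizing both moments by the main term $x^{m/2}$: divide the intervals' contributions by the average density $x^{m/2-1}$ of representations, so that the effective error ratio becomes $x^{-\frac{2k-2}{3k(j+1)^2}}$ (the weaker, second-moment saving). This forces at least $x^{\frac{2k-2}{3k(j+1)^2}-\epsilon}$ sign changes, which is at least $x^{1-\delta_j}$ precisely when $\delta_j>1-\frac{2k-2}{3k(j+1)^2}$. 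The upper constraint $\delta_j<1-\frac{k-1}{3k(j+1)^2}$ only keeps the claim nontrivial and separates it from the $m=2$ case of Theorem~\ref{thm:mainsign}. I expect this normalization, together with verifying $D_{j,f,m}>0$, to be the main obstacle. If it fails, I would fall back on the weaker exponent that the crude argument actually delivers.
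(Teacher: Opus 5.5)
\textbf{The gap is in your lower-bound step.} The inequality $\sum b^2r\le\max(|b|\,r)\sum|b|\,r$ throws away a factor $r_m(n+1)$, typically of size $x^{m/2-1}$, for no reason. The no-sign-change hypothesis concerns only $b(n)=a(n)\lambda_{\mathrm{sym}^jf}(n+1)$. You should therefore write $b^2r=|b|\cdot(|b|\,r)$ and bound just $|b(n)|\ll x^{\epsilon}$, leaving $r_m$ inside the sum. This gives $\sum_{x<n\le 2x}b^2r\ll x^{\epsilon}\sum_{x<n\le 2x}|b|\,r$, which is exactly the move in the paper's display \eqref{eq:uppbound1}.

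As you wrote it, the comparison $x^{1-\epsilon}\ll x^{1-\delta_j+\frac m2-\frac{2k-2}{3k(j+1)}+\epsilon}$ gives no contradiction for $m\ge 4$. Your proposed rescue, ``normalizing by the average density $x^{m/2-1}$'', is not an argument, for three reasons:
\begin{itemize}
\item $r_m(n+1)$ is not uniformly of that size.
\item Your first-moment bound is already a weighted bound.
\item The count $x^{\frac{2k-2}{3k(j+1)^2}-\epsilon}$ you announce is read off from the target range, not derived.
\end{itemize}
So the proposal, as it stands, does not prove the theorem.

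\textbf{With the correct factorization, your global counting argument does close, by a route different from the paper's.} Theorem~\ref{thm:main3} on the long interval $(x,2x]$ gives $\sum_{x<n\le2x}|b|\,r\gg x^{m/2-\epsilon}$; its error term only needs to be $o(x^{m/2})$ there. Comparing with $(N+1)x^{\frac m2-\frac{2k-2}{3k(j+1)}+\epsilon}$ from Theorem~\ref{thm:main2} gives $N\gg x^{\frac{2k-2}{3k(j+1)}-2\epsilon}$. This exceeds $x^{1-\delta_j}$ because $1-\delta_j<\frac{2k-2}{3k(j+1)^2}<\frac{2k-2}{3k(j+1)}$. So the governing saving is the first-moment one, not the second-moment one as you suggested.

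The paper argues locally instead. It assumes no sign change on $(x,x+h]$ with $h=x^{\delta_j}$. It then bounds the second moment over that short interval above by $x^{\epsilon}(|S_m(x+h)|+|S_m(x)|)$ and below by $x^{\frac m2-1+\delta_j}$. The lower bound is exactly where $\delta_j>1-\frac{2k-2}{3k(j+1)^2}$ is needed, so that this main term beats the error term of Theorem~\ref{thm:main3}.

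The trade-off: the local route gives a sign change in every interval of length $x^{\delta_j}$, while the repaired global route gives only a total count, but a larger one. Both routes take $D_{j,f,m}>0$ for granted. Your heuristic justification of this is not a proof; it fails, for example, for $a\equiv 0$.
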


\subsection*{Organisation of the article}
This article is organised as follows. In Section~\ref{sec:Prel}, we discuss the preliminaries, important lemmas, and bounds necessary to prove our results. The proofs of our main results, related to the partial sums of the symmetric power $L$-function attached to Hecke eigenforms, are given in Sections~\ref{sec:main},~\ref{sec:main(4681012)},~\ref{sec:main1}, and~\ref{sec:main1(468)}. Sections~\ref{sec:conv-sum} and~\ref{sec:conv-sum-square} deal with results related to shifted convolution sums of the symmetric power $L$-function twisted with $k$-full kernel functions. Finally, Sections~\ref{sec:sign-change-2} and~\ref{sec:sign-change-m} contain the results related to the number of sign changes of $\lambda_{\mathrm{sym}^jf}(n)$ twisted with a $k$-full kernel function over the sum of squares.

\section{Preliminaries and some important lemmas} \label{sec:Prel}
Let
\[
r_k(n) := \#\bigl\{(n_1, n_2, \dots, n_k) \in \mathbb{Z}^k : n_1^2 + n_2^2 + \cdots + n_k^2 = n\bigr\},
\]
where we count all ordered $k$-tuples of integers $(n_1,\dots,n_k)$ satisfying the equation, 
including zeros and treating different signs and permutations as distinct. 

We will define the functions $r_m(n)$, where $m=2,4,6,8,10,12$, which are defined as follows
\begin{defn} \cite[p.~121]{Grosswald}
    For any positive integer $n$, define \begin{align} \label{eq:r_2(n)}
        r_2(n) = 4\sum_{d|n}\rchi_ 4(d), 
        \end{align}
    \begin{align} \label{eq:r_4(n)}
        r_4(n) = 8\sum_{d|n}d,
    \end{align}
    \begin{align} \label{eq:r_6(n)}
        r_6(n) = 16\sum_{d|n}d^2\rchi_ 4\left(\frac{n}{d} \right)-4\sum_{d|n}d^2\rchi_ 4(d),
    \end{align}
    \begin{align} \label{eq:r_8(n)}
        r_8(n) = 16\sum_{d|n}(-1)^{n+d}d^3,
    \end{align}
    \begin{align} \label{eq:r_10}
        r_{10}(n)=\frac{64}{5}\biggl\{\sum_{d|n}\rchi(d^{\prime})d^{4} + \frac{1}{16}\sum_{d|n}\rchi(d)d^{4}\biggr\} + \frac{32}{5}a_n,
    \end{align}
    \begin{align} \label{eq:r_12}
        r_{12}(n)= 8\sum_{d|n}(-1)^{n+d+\frac{n}{d}-1}d^5+16b_n,
    \end{align}

where $\rchi_ 4$ is the non-principal Dirichlet character modulo $4$, i.e.,
\[
\rchi_ 4(n)=\begin{cases}
1 & \text{if } n\equiv 1  \pmod{4}, \\
-1 & \text{if } n\equiv -1  \pmod{4}, \\
0 & \text{if } n\equiv 0   \pmod{2},
\end{cases}
\]
$a_n$ is defined via the identity
\[
\theta_2^4 \, \theta_3^2 \, \theta_4^4 = 16 \sum_{n=1}^\infty a_n q^n \qquad (q = e^{2\pi i z}),
\]
where the classical theta functions are given by
$$\theta_2= 2q^{\frac{1}{4}}\prod_{m=1}^{\infty}(1-q^{2m})(1+q^{2m})^2,$$
$$\theta_3= \prod_{m=1}^{\infty}(1-q^{2m})(1+q^{2m-1})^2,\text{ and}$$
$$\theta_4= \prod_{m=1}^{\infty}(1-q^{2m})(1-q^{2m-1})^2 \quad (|q|<1).$$ 

and $b_n$ is defined via the identity
\[
\left (\frac{\theta_1'}{\pi}\right)^4 = 16 \sum_{n=1}^\infty b_n q^n \qquad (q = e^{2\pi i z}),
\]
where 
$$ \theta_1' = 2\pi q^{1/4}\prod_{m=1}^{\infty}(1-q^{2m})^3 \quad (|q| < 1).$$
\end{defn}

Define the arithmetic functions
\begin{defn}
    \begin{align} \label{eq:l_1(n)}
        l_1(n) = \sum_{d|n}\rchi_ 4(d),
    \end{align}
    \begin{align} \label{eq:l_2(n)}
        l_2(n) = \sum_{d|n}d,
    \end{align}
    \begin{align} \label{eq:l_3(n)&v_3(n)}
        l_3(n) = \sum_{d|n}d^2\rchi_ 4\left( \frac{n}{d} \right), \quad v_3(n) \sum_{d|n} d^2 \rchi_ 4(d),
    \end{align}
    \begin{align} \label{eq:l_4(n)}
        l_4(n) = \sum_{d|n}(-1)^{n+d}d^3.
    \end{align}
    \begin{align} \label{eq:l_5(n)&v_5(n)}
        l_5(n) = \sum_{d \mid n} \rchi\!\left(\frac{n}{d}\right) d^{\,4},
        \qquad
        v_5(n) = \sum_{d \mid n} \rchi(d)\,d^{\,4}, 
    \end{align}
    \begin{align}\label{eq:l_6(n)}
        l_6(n) = \sum_{d|n}(-1)^{n+d+\frac{n}{d}-1}d^5.
    \end{align}
\end{defn}

Observing the definitions of the arithmetic functions, we have 
\begin{align} \label{eq:r_2&l_1}
    r_2(n) = 4l_1(n) \ll n^\epsilon,
\end{align}
\begin{align} \label{eq:r_4&l_2}
    r_4(n) = 8l_2(n) \ll n^{1+\epsilon},
\end{align}
\begin{align} \label{eq:r_6&l_3&v_3}
    r_6(n) = 16l_3(n)-4v_3(n) \ll n^{2+\epsilon},
\end{align}
\begin{align} \label{eq:r_8&l_4}
    r_8(n) = 16l_4(n) \ll n^{3+\epsilon},
\end{align}
\begin{align}\label{eq:r_10&l_5&v_5}
    r_{10}(n) = \frac{64}{5}\,l_5(n) + \frac{4}{5}\,v_5(n) + \frac{32}{5}\,a_n, 
\end{align}
\begin{align} \label{eq:r_12&l_6}
    r_{12}(n) = 8l_6(n)+16b_n.
\end{align}
where $\epsilon > 0$ and we note that in general, $r_{m}(n) \ll n^{\frac{m}{2}-1}+\epsilon$ for $m=2,4,6,8,10,12$.

We have \begin{align}
    \sum_{\substack{a_{1}^{2}+\cdots+a_{m}^{2}\leq x \\ (a_{1},\ldots,a_{m})\in\mathbb{Z}^{m}}}\lambda_{\mathrm{sym}^{j}f}(\sum_{i=1}^{m}a_{i}^{2})\rchi(\sum_{i=1}^{m}a_{i}^{2})
    &= \sum_{n\leq x}\lambda_{\mathrm{sym}^{j}f}(n)\rchi(n)\sum_{\substack{n=a_{1}^{2}+\cdots+a_{m}^{2} \\ (a_{1},\ldots,a_{m})\in\mathbb{Z}^{10}}} 1 \\
    &= \sum_{n\leq x}\lambda_{\mathrm{sym}^{j}f}(n)\rchi(n)r_{m}(n)
\end{align}

Now using \eqref{eq:r_2&l_1}, \eqref{eq:r_4&l_2}, \eqref{eq:r_6&l_3&v_3},  \eqref{eq:r_8&l_4}, \eqref{eq:r_10&l_5&v_5}, \eqref{eq:r_12&l_6} and above equation, we have \begin{align} \label{eq:Lmbda2Def}
    \sum_{n\leq x}\lambda_{\mathrm{sym}^{j}f}(n)\rchi(n)r_{2}(n) = 4\sum_{n\leq x}\lambda_{\mathrm{sym}^{j}f}(n)\rchi(n)l_1(n),
\end{align}
\begin{align} \label{eq:Lmbda4Def}
    \sum_{n\leq x}\lambda_{\mathrm{sym}^{j}f}(n)\rchi(n)r_{4}(n) = 8\sum_{n\leq x}\lambda_{\mathrm{sym}^{j}f}(n)\rchi(n)l_2(n),
\end{align}
\begin{align} \label{eq:Lmbda6Def}
    \sum_{n\leq x}\lambda_{\mathrm{sym}^{j}f}(n)\rchi(n)r_{6}(n) = 16\sum_{n\leq x}\lambda_{\mathrm{sym}^{j}f}(n)\rchi(n)l_3(n)-4\sum_{n\leq x}\lambda_{\mathrm{sym}^{j}f}(n)\rchi(n)v_3(n),
\end{align}
\begin{align} \label{eq:Lmbda8Def}
    \sum_{n\leq x}\lambda_{\mathrm{sym}^{j}f}(n)\rchi(n)r_{8}(n) = 16\sum_{n\leq x}\lambda_{\mathrm{sym}^{j}f}(n)\rchi(n)l_4(n),
\end{align}
\begin{align} \label{eq:Lmbda10Def}
    \sum_{\substack{n\leq x }}\lambda_{\mathrm{sym}^{j}f}(n)\rchi(n)r_{10}(n) &= \frac{64}{5}\sum_{n\leq x}\lambda_{\mathrm{sym}^{j}f}(n)\rchi(n)l_5(n)+\frac{4}{5}\sum_{n\leq x}\lambda_{\mathrm{sym}^{j}f}(n)\rchi(n)v(n)\\
    &+\frac{32}{5}\sum _{n\leq x}\lambda_{\mathrm{sym}^{j}f}(n)\rchi(n)a_n,
\end{align}
\begin{align}\label{eq:Lmbda12Def}
    \sum_{\substack{n\leq x }}\lambda_{\mathrm{sym}^{j}f}(n)\rchi(n)r_{12}(n) &= 8\sum _{n\leq x}\lambda_{\mathrm{sym}^{j}f}(n)\rchi(n)l_{6}(n)+16\sum _{n\leq x}\lambda_{\mathrm{sym}^{j}f}(n)\rchi(n)b_n.
\end{align}

\begin{subsection}{Bounds for \texorpdfstring{$\lambda_{\mathrm{sym}^jf}(n)$}{}}
Note that \eqref{eq:deligne2} yields  $\left|1-\frac{\alpha^{j-1}(p)\beta^i(p)}{p^s}\right| \geq 1-\frac{1}{p^\sigma}>0$  for $\Re (s)=\sigma>1$. Therefore,
\begin{align*}
        |L(s,\mathrm{sym}^jf)| &\leq \prod_{p}\prod_{i=0}^{j}\left(1-\frac{1}{p^\sigma} \right)^{-1} 
        =\prod_{i=0}^{j}\zeta(\sigma) = \zeta(\sigma)^{j+1}
        = \sum_{n=1}^{\infty}\frac{d_{j+1}(n)}{n^\sigma},
\end{align*} 
where $d_{j+1}(n)$ is the number of ways of expressing $n$ as a product of $j+1$ factors. 
Since $d_{k}(n) \le d(n)^{k-1}$ for positive integers $k$ and $n$, and since $d(n) \ll_{\epsilon} n^{\epsilon}$ for any $\epsilon > 0$, we obtain
\[
d_{k}(n) \ll_{k,\epsilon} n^{\epsilon} \quad \text{for any } \epsilon > 0.
\]
Therefore, the Dirichlet series for $L(s, \operatorname{sym}^j f)$ is absolutely convergent for $\Re(s) > 1$.

Note that \eqref{eq:deligne1} and \eqref{eq:deligne2} imply
\[
|\lambda_{\operatorname{sym}^{j}f}(n)| \le d_{j+1}(n).
\]
Consequently, for any $\epsilon>0$,
\begin{equation} \label{eq:lambdaSymbound}
|\lambda_{\operatorname{sym}^{j}f}(n)| \ll_{j,\epsilon} n^{\epsilon}.
\end{equation}

Since $\lambda_{\operatorname{sym}^{j}f}(n)$ is multiplicative, $L(s,\operatorname{sym}^{j}f)$ admits an Euler product
\begin{equation} \label{eq:eulerproduct}
L(s,\operatorname{sym}^{j}f) = \prod_{p} \Bigl( 1 + \frac{\lambda_{\operatorname{sym}^{j}f}(p)}{p^{s}} 
    + \frac{\lambda_{\operatorname{sym}^{j}f}(p^{2})}{p^{2s}} + \dots \Bigr),
\end{equation}
absolutely convergent for $\Re(s) > 1$.


Observe that
\begin{align} \label{eq:lambdaatp}
\lambda_{\mathrm{sym}^{j}f}(p)=\sum_{m=0}^{j}\alpha^{j-m}(p)\beta^{m}(p).
\end{align}

Moreover, Hecke theory gives the relation
\begin{equation} \label{eq:hecke}
\lambda_{\operatorname{sym}^{j}f}(p) = \lambda_f(p^{j}) \qquad (\text{$p$ prime}),
\end{equation}
and we also know that \begin{align} \label{lambdasquare}
    \lambda_{\mathrm{sym}^{j}f}^{2}(p) \;=\; 1 \;+\; \sum_{\ell=1}^{j} \lambda_{\mathrm{sym}^{2\ell}f}(p).
\end{align}

\end{subsection}

\begin{subsection}{Important lemmas}
\begin{lemma} \label{lem:F1}
Let \(f\) be a normalised primitive holomorphic cusp form of weight \(k\) for \(\operatorname{SL}(2,\mathbb{Z})\), 
and let \(\lambda_{\operatorname{sym}^{j}f}(n)\) denote the \(n\)-th normalised Fourier coefficient 
of the \(j\)-th symmetric power \(L\)-function attached to \(f\) and let $\rchi$ be a Dirichlet character modulo $q$.  

Define
\[
F_{j}^{(1)}(s, \rchi)=\sum_{n=1}^{\infty}\frac{\lambda_{\operatorname{sym}^{j}f}(n)\,l_1(n)\rchi(n)}{n^{s}}, \qquad \Re(s)>1,
\]
where \(l_1(n)\) is given by \eqref{eq:l_1(n)}. Then \(F_{j}^{(1)}(s,\rchi)\) admits a factorisation
\[
F_{j}^{(1)}(s, \rchi)=G_{j}^{(1)}(s, \rchi)\,H_{j}^{(1)}(s, \rchi),
\]
in which
\[
G_{j}^{(1)}(s,\rchi):=L\!\bigl(s,\operatorname{sym}^{j}f \otimes \rchi\bigr)\; 
        L\!\bigl(s,\operatorname{sym}^{j}f\otimes\rchi_ 4\rchi\bigr),
\]
\(\rchi_ 4\) is the unique non-principal Dirichlet character modulo \(4\), 
and \(H_{j}^{(1)}(s, \rchi)\) is a Dirichlet series that converges absolutely and uniformly 
in the half-plane \(\Re(s)>\frac{1}{2}\).
\end{lemma}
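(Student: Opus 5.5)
The plan is to compare Euler products prime by prime. Both $\lambda_{\operatorname{sym}^{j}f}(n)$ and $l_1(n)=\sum_{d\mid n}\rchi_4(d)$ are multiplicative, and $\rchi$ is completely multiplicative. Hence $F_{j}^{(1)}(s,\rchi)$ has an Euler product, absolutely convergent for $\Re(s)>1$ by \eqref{eq:lambdaSymbound} and $l_1(n)\le d(n)$:
\[
F_{j}^{(1)}(s,\rchi)=\prod_p\Bigl(1+\sum_{\nu\ge1}\frac{\lambda_{\operatorname{sym}^{j}f}(p^\nu)\,l_1(p^\nu)\rchi(p^\nu)}{p^{\nu s}}\Bigr).
\]
We then define $H_{j}^{(1)}(s,\rchi):=F_{j}^{(1)}(s,\rchi)/G_{j}^{(1)}(s,\rchi)$ in $\Re(s)>1$, write it as an Euler product, and show its local factors are $1+O(p^{-2\sigma+\epsilon})$ uniformly in $p$ and $\rchi$.

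The key computation is at $\nu=1$. Since $l_1(p)=1+\rchi_4(p)$, the coefficient of $p^{-s}$ in the local factor of $F_j^{(1)}$ is
\[
\lambda_{\operatorname{sym}^{j}f}(p)\rchi(p)+\lambda_{\operatorname{sym}^{j}f}(p)\rchi_4(p)\rchi(p).
\]
The local factor of $L(s,\operatorname{sym}^{j}f\otimes\psi)$ is $\prod_{i=0}^{j}(1-\alpha^{j-i}(p)\beta^{i}(p)\psi(p)p^{-s})^{-1}$, whose $p^{-s}$ coefficient is $\lambda_{\operatorname{sym}^{j}f}(p)\psi(p)$ by \eqref{eq:lambdaatp}. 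So the $p^{-s}$ coefficient of $G_j^{(1)}$ matches that of $F_j^{(1)}$ exactly. At $p=2$ one has $\rchi_4(2)=0$ and $l_1(2^\nu)=1$, and the same matching holds; primes $p\mid q$ only make terms vanish. Write $F_p$ and $G_p$ for the local factors of $F_j^{(1)}$ and $G_j^{(1)}$ at $p$. Multiplying $F_p$ by $G_p^{-1}$, which is a polynomial in $p^{-s}$ of degree $2(j+1)$, gives
\[
H_p(s)=F_p\,G_p^{-1}=1+\sum_{\nu\ge2}\frac{h(p^\nu)}{p^{\nu s}}.
\]

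The main obstacle is bounding $h(p^\nu)$ uniformly so the tail converges. We use $|\alpha(p)|=|\beta(p)|=1$ from \eqref{eq:deligne2}, together with $|\lambda_{\operatorname{sym}^{j}f}(p^\nu)|\le d_{j+1}(p^\nu)=\binom{\nu+j}{j}$ and $|l_1(p^\nu)|\le \nu+1$. The coefficients of $G_p^{-1}$ are bounded by $\binom{2j+2}{\cdot}$. Hence $|h(p^\nu)|\ll_j (\nu+1)^{C_j}$ for some $C_j$, with no dependence on $p$ or $\rchi$.

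Consequently, for $\sigma\ge\frac12+\delta$ we get $\sum_{\nu\ge2}|h(p^\nu)|p^{-\nu\sigma}\ll_{j,\delta}p^{-2\sigma}\le p^{-1-2\delta}$. Summing over $p$ converges, so $\prod_p H_p(s)$, and equivalently the Dirichlet series $\sum h(n)n^{-s}$ with multiplicative $h$, converges absolutely and uniformly in $\Re(s)\ge\frac12+\delta$ for every $\delta>0$. This gives the claimed factorisation $F_j^{(1)}=G_j^{(1)}H_j^{(1)}$, first in $\Re(s)>1$ and then as the definition of $H_j^{(1)}$ in $\Re(s)>\frac12$.
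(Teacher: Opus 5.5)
Your proof is correct and follows the same route as the paper: compare Euler factors prime by prime, use $l_1(p)=1+\rchi_4(p)$ to match the $p^{-s}$ coefficients of $F_j^{(1)}$ and $G_j^{(1)}$, and bound the remaining local coefficients to get absolute convergence of $H_j^{(1)}$ for $\Re(s)>\frac12$. The only difference is in the tail bound, and it favours you: the paper expands $(1+A)/(1+B)$ as a geometric series in $B$, which it justifies only for $\Re(s)>1+2\epsilon$, and then simply asserts $c(p^m)\ll p^{m\epsilon}$. Your observation that $G_p^{-1}$ is a polynomial of degree $2j+2$ in $p^{-s}$ with Deligne-bounded coefficients instead gives $|h(p^\nu)|\ll_j(\nu+1)^{C_j}$ uniformly in $p$ and $\rchi$, which is exactly what the convergence argument needs.
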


\begin{proof}
We know that,
\[
\lambda_{\operatorname{sym}^{j}f}(n)\,l_1(n)\rchi(n) \ll n^{\epsilon} \qquad (\epsilon > 0),
\]
which implies that the Dirichlet series \(F_j^{(1)}(s,\rchi)\) converges absolutely for \(\Re(s) > 1\). Since \(\lambda_{\operatorname{sym}^{j}f}(n)\) is multiplicative, \(F_j^1(s)\) therefore admits an Euler product in this half-plane in \(\Re(s) > 1\):
\begin{align} \label{eq:FjEuler1}
    F_{j}(s,\rchi)=\prod_{p}
        \Bigl(1+
        \frac{\lambda_{\operatorname{sym}^{j}f}(p)\,l_1(p)\rchi(p)}{p^{s}}+
        \frac{\lambda_{\operatorname{sym}^{j}f}(p^{2})\,l_1(p^{2})\rchi(p^2)}{p^{2s}}+
        \cdots
        +\frac{\lambda_{\operatorname{sym}^{j}f}(p^{m})\,l_(p^{m})\rchi(p^m)}{p^{ms}}+
        \cdots\Bigr).
\end{align}

Now define the multiplicative function \(b_1(n)\) via its Euler product
\[
\sum_{n=1}^{\infty}\frac{b_1(n)}{n^{s}} 
    := L(s,\operatorname{sym}^{j}f\otimes \rchi)\;L(s,\operatorname{sym}^{j}f\otimes\rchi_ 4\rchi)
    (=: G_{j}(s, \rchi)),
\]
so that in particular for primes \(p\),
\[
b_1(p) = \lambda_{\operatorname{sym}^{j}f}(p)\,\rchi(p) 
       + \lambda_{\operatorname{sym}^{j}f}(p)\,\rchi_ 4\rchi(p).
\]
Since \(l(p) = 1+\rchi_ 4(p)\), we obtain that $b_1(p) = \lambda_{\operatorname{sym}^{j}f}(p)\,l_1(p)\rchi_ (p)$, establishing the desired equality at each prime.

        But note that $b_1(p^k) \neq \lambda_{\mathrm{sym}^jf}(p^k)l(p^k)\rchi(p^k) \text{ for } k>1$ and again 
        \begin{align*}
            |b_1(n)| 
            =|(\lambda_{\mathrm{sym}^jf}\rchi*\lambda_{\mathrm{sym}^jf}\rchi_ {4}\rchi)(n)|
            &\leq \sum_{d|n}|\lambda_{\mathrm{sym}^jf}(d)\rchi(d)||\lambda_{\mathrm{sym}^jf}\left(\frac{n}{d}\right)\rchi_ {4}\rchi\left(\frac{n}{d}\right)| \\
            & \leq \sum_{d|n}d^{\epsilon}\left(\frac{n}{d}\right)^\epsilon \leq n^{\epsilon}d(n) \ll_\epsilon n^{\epsilon} \text{ for any } \epsilon >0.
        \end{align*}
        So $\displaystyle\sum_{n=1}^{\infty}\frac{b_1(n)}{n^s}$ is absolutely convergent by $\Re(s) > 1$ and the Euler product ensures that $$\sum_{n=1}^{\infty}\frac{b_1(n)}{n^s} = \prod_p \left(1+\sum_{m \geq1}\frac{b_1(p^m)}{p^{ms}}\right)\quad (\Re (s)>1).$$
        Now, \begin{align*}
            \bigg|\sum_{m=1}^{\infty}\frac{b_1(p^m)}{p^{ms}}\bigg| &\leq \sum_{m=1}^{\infty}\frac{p^{\epsilon m}}{p^{m\sigma}} \leq \sum_{m=1}^{\infty}\frac{p^{\epsilon m}}{p^{(1+2\epsilon)m}} = \sum_{m=1}^{\infty} \frac{1}{p^{m(1+\epsilon)}} = \frac{1}{p^{1+\epsilon}-1} <1 
        \end{align*}
        for $\Re(s) > 1+2\epsilon$.
        
        Let $$A = \sum_{m=1}^{\infty} \frac{\lambda_{\mathrm{sym}^jf}(p^m)l_1(p^m)\rchi(p^m)}{p^{ms}}, \quad \text{and}$$
        $$ B = \sum_{m=1}^{\infty}\frac{b_1(p^m)}{p^{ms}} \quad (|B| < 1 ).$$

        Therefore, \begin{align*}
            \frac{1+A}{1+B} &= (1+A)(1-B+B^2-\cdots) \\&= 1+A -B -AB + \cdots \\
            &= 1+ \frac{\lambda_{\mathrm{sym}^jf}(p^2)l_1(p^2)\rchi(p^2)-b_1(p^2)}{p^{2s}}+ \cdots+\frac{c(p^m)}{p^{ms}}+ \cdots\\
            &= \sum_{n \geq 1} \frac{c_p(n)}{n^s} \quad \text{(say)},
        \end{align*}
        where $$c_p(n) = \begin{cases}
        1 & \text{if } n=1, \\
        c(n) & \text{if } n = p^m \,(m \geq 2), \\
        0 & \text{otherwise.}
        \end{cases}$$
        Note that the above equality holds for $\Re(s) > 1+2\epsilon$ for all $\epsilon > 0$, and that the series is absolutely convergent in this region.
        Also note that $c_p(n) \ll n^\epsilon$ for all $\epsilon > 0$.
        We define $c(n)$ for any $n \in \mathbb{N}$ by $$\prod_{p}\frac{1+A}{1+B} = \prod_p\left( 1+ \sum_{m \geq 1}\frac{c(p^m)}{p^{ms}}\right) = \sum_{n=1}^{\infty}\frac{c(n)}{n^s}.$$
        By construction, $c(n)$ is multiplicative.

         Now Define \begin{align*}
             H_j^{(1)}(s, \rchi) : = \frac{F_j^{(1)}(s,\rchi)}{G_j^{(1)}(s,\rchi)} &=\prod_{p} \frac{1+ \sum_{m \geq 1}\frac{\lambda_{\mathrm{sym}^jf}(p^m)l_1(p^m)\rchi(p^m)}{p^{ms}}}{1+\sum_{m\geq1}\frac{b(p^m)}{p^{ms}}} \\
             &= \prod_p \frac{1+A}{1+B} = \sum_{n=1}^{\infty}\frac{c(n)}{n^s}.
         \end{align*}

         We now show the region of convergence of $H_j^{(1)}(s,\rchi)$:
         \bigskip
         Note that \begin{align*}
             \sum_{m \geq 3}\bigg|\frac{c(p^m)}{p^{ms}}\bigg| &\leq \sum_{m\geq 3}\frac{p^{m\epsilon}}{p^{m\sigma}} = \sum_{m \geq 3} \frac{1}{p^{m(\sigma - \epsilon)}} \\
             &= \frac{1}{p^{2(\sigma-\epsilon)}(p^{\sigma -\epsilon}-1)} < \frac{1}{p^{2(\sigma-\epsilon)}} \text{ for } \epsilon >0, \text{ as small as possible}.
         \end{align*} The above inequality of the series is true for $\Re(s)> 1+\epsilon$.
         Again,\begin{align*}
             \frac{c(p^2)}{p^{2\sigma}} &= \frac{\lambda_{\mathrm{sym}^jf}(p^2)l_1(p^2)\rchi(p^2) - b(p^2)}{p^{2\sigma}} \\ 
             &= O(\frac{p^{2\epsilon}}{p^{2\sigma}}) = O(\frac{1}{p^{2\sigma-2\epsilon}}).
         \end{align*}

         Now $\prod_p\left(1 + \bigg|\frac{c(p^2)}{p^{2s}}\bigg|+ \sum_{m\geq 3} \bigg|\frac{c(p^m)}{p^{ms}}\bigg|\right) = \prod_p\left(1+u_p\right)$ is convergent if and only if $\sum_pu_p$ is convergent, where $u_p = |\frac{c(p^2)}{p^{2s}}|+\sum_{m \geq 3}|\frac
         {c(p^m)}{p^{ms}}|$. Note that $$ \sum_pu_p \ll  \sum_p \frac{1}{p^{2\sigma -2\epsilon}} $$ is absolutely convergent for $2\sigma -2\epsilon > 1$ i.e., $\sigma > \frac{1}{2}+\epsilon$ for $\epsilon >0$ as small as possible. 

         So in this region $H_j^{(1)}(s,\rchi) \ll_\epsilon 1$ and $H_j^{(1)}(s,\rchi)$ is absolutely convergent in $\Re(s) > \frac{1}{2}$. 
    \end{proof}

Proof of the following lemmas follows the same process as above. So we don't provide the details of the proofs.

\begin{lemma}\label{lem:F2}
Let \(f\) be a normalised primitive holomorphic cusp form of weight \(k\) for \(\operatorname{SL}(2,\mathbb{Z})\), 
and let \(\lambda_{\operatorname{sym}^{j}f}(n)\) denote the \(n\)-th normalised Fourier coefficient 
of the \(j\)-th symmetric power \(L\)-function attached to \(f\) and let $\rchi$ be a Dirichlet character modulo $q$.  

Define
\[
F_{j}^{(2)}(s, \rchi)=\sum_{n=1}^{\infty}\frac{\lambda_{\operatorname{sym}^{j}f}(n)\,l_2(n)\rchi(n)}{n^{s}}, \qquad \Re(s)>2,
\]
where \(l_2(n)\) is given by \eqref{eq:l_2(n)}. Then \(F_{j}^{(2)}(s,\rchi)\) admits a factorisation
\[
F_{j}^{(2)}(s, \rchi)=G_{j}^{(2)}(s, \rchi)\,H_{j}^{(2)}(s, \rchi),
\]
in which
\[
G_{j}^{(2)}(s,\rchi):=L\!\bigl(s,\operatorname{sym}^{j}f \otimes \rchi\bigr)\; 
        L\!\bigl(s-1,\operatorname{sym}^{j}f\otimes\rchi\bigr)
\]
and \(H_{j}^{(2)}(s, \rchi)\) is a Dirichlet series that converges absolutely and uniformly 
in the half-plane \(\Re(s)>\frac{3}{2}\).
\end{lemma}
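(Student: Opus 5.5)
We follow the scheme of the proof of Lemma~\ref{lem:F1}: compare Euler factors prime by prime and show that the quotient $F_j^{(2)}/G_j^{(2)}$ has no degree-one terms.

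First, by \eqref{eq:lambdaSymbound} and $l_2(n)=\sigma(n)\ll n^{1+\epsilon}$, the coefficients satisfy $\lambda_{\mathrm{sym}^jf}(n)l_2(n)\rchi(n)\ll n^{1+\epsilon}$. Hence $F_j^{(2)}(s,\rchi)$ converges absolutely for $\Re(s)>2$. Since $l_2$, $\rchi$ and $\lambda_{\mathrm{sym}^jf}$ are multiplicative, $F_j^{(2)}$ has an Euler product there.

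Next, define the multiplicative function $b_2(n)$ by
\[
\sum_{n\ge1}\frac{b_2(n)}{n^s}=L(s,\mathrm{sym}^jf\otimes\rchi)\,L(s-1,\mathrm{sym}^jf\otimes\rchi),
\]
so that $b_2(n)=\sum_{d\mid n}\lambda_{\mathrm{sym}^jf}(d)\rchi(d)\,\lambda_{\mathrm{sym}^jf}(n/d)\rchi(n/d)\,(n/d)$. At a prime,
\[
b_2(p)=\lambda_{\mathrm{sym}^jf}(p)\rchi(p)(1+p)=\lambda_{\mathrm{sym}^jf}(p)\,l_2(p)\,\rchi(p),
\]
since $l_2(p)=1+p$. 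Thus the first-degree terms of the two Euler factors agree. The trivial bound $|b_2(n)|\le \sum_{d\mid n} d^{\epsilon}(n/d)^{1+\epsilon}\ll n^{1+\epsilon}$ also shows that the product $G_j^{(2)}$ converges absolutely for $\Re(s)>2$.

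Now set $A=\sum_{m\ge1}\lambda_{\mathrm{sym}^jf}(p^m)l_2(p^m)\rchi(p^m)p^{-ms}$ and $B=\sum_{m\ge1}b_2(p^m)p^{-ms}$. For $\sigma>2+2\epsilon$ we have $|B|<1$. Expanding $(1+A)/(1+B)=1+\sum_{m\ge2}c(p^m)p^{-ms}$, the $p^{-s}$ term cancels because $A$ and $B$ agree to first order. Define $H_j^{(2)}=\prod_p(1+A)/(1+B)=\sum_n c(n)n^{-s}$, which is multiplicative.

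The main point, and the only place requiring care, is the size of $c(p^m)$. It is a polynomial combination of the coefficients of $A$ and $B$ at $p^{m_1},\dots,p^{m_r}$ with $\sum m_i=m$. Each such coefficient is $\ll (p^{m_i})^{1+\epsilon}$, but the number of terms in this combination is not bounded uniformly in $m$. It grows with the number of compositions of $m$, roughly like $2^m$. This growth is harmless, since $2^m\ll p^{\epsilon m}$ once $p$ is large, and small primes contribute only finitely many factors. Hence $c(p^m)\ll_\epsilon p^{m(1+\epsilon)}$.

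Then for $\sigma=\Re(s)$,
\[
u_p:=\sum_{m\ge2}\frac{|c(p^m)|}{p^{m\sigma}}\ll \sum_{m\ge2}p^{-m(\sigma-1-\epsilon)}\ll p^{-2(\sigma-1-\epsilon)}.
\]
The sum $\sum_p u_p$ converges provided $2(\sigma-1-\epsilon)>1$, i.e.\ $\sigma>\tfrac32+\epsilon$. Therefore $\prod_p(1+u_p)$ converges, and $H_j^{(2)}(s,\rchi)$ converges absolutely and uniformly in $\Re(s)>\tfrac32+\epsilon$ for every $\epsilon>0$, with $H_j^{(2)}\ll_\epsilon 1$ there.

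Finally, the identity $F_j^{(2)}=G_j^{(2)}H_j^{(2)}$ holds for $\Re(s)>2$ by construction. Since $H_j^{(2)}$ is given by an absolutely convergent series in $\Re(s)>\tfrac32$, this factorisation provides the required extension of $H_j^{(2)}$ to that half-plane.
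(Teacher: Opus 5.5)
Your route is the paper's: the Euler-factor comparison of Lemma~\ref{lem:F1}, with $b_2(p)=\lambda_{\mathrm{sym}^jf}(p)\rchi(p)(1+p)$ matching $l_2(p)=1+p$. Everything up to the size of $c(p^m)$ is fine. The gap is exactly at the step you singled out.

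Bounding $c(p^m)$ by the triangle inequality over compositions only gives a majorant of the form $C^m p^{m(1+\epsilon)}$, where the constant $C>1$ does not shrink with $p$. Already the composition $1+\cdots+1$ contributes $|b_2(p)|^m$, and Deligne only gives $|b_2(p)|\le (j+1)(p+1)$.

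Your majorant for $u_p$ is therefore a geometric series of ratio about $Cp^{1+\epsilon-\sigma}$. When $\sigma$ is near $\tfrac32$, this ratio is $\ge 1$ for every prime up to about $C^2$. For those primes you have not shown that the local factor of $H_j^{(2)}$ converges at all. The remark that they are \emph{only finitely many factors} does not help, since each factor must first be finite. The assertion $|B|<1$ for $\sigma>2+2\epsilon$ has the same non-uniformity in $p$. The paper's proof of Lemma~\ref{lem:F1}, which this lemma copies, simply asserts the coefficient bound, so it has the same hole.

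The repair is to keep the cancellation that the triangle inequality discards. Since $1+B$ is the $p$-th Euler factor of $G_j^{(2)}(s,\rchi)$, its inverse is a polynomial in $p^{-s}$:
\[
\frac{1}{1+B}=\prod_{i=0}^{j}\Bigl(1-\frac{\alpha^{j-i}(p)\beta^{i}(p)\rchi(p)}{p^{s}}\Bigr)\Bigl(1-\frac{\alpha^{j-i}(p)\beta^{i}(p)\rchi(p)}{p^{s-1}}\Bigr)=\sum_{k=0}^{2j+2}\frac{e_k}{p^{ks}},\qquad |e_k|\le\binom{2j+2}{k}p^{k}.
\]
No condition $|B|<1$ is needed. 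Hence
\[
c(p^m)=\sum_{0\le k\le\min(m,2j+2)}e_k\,\lambda_{\mathrm{sym}^jf}(p^{m-k})\,l_2(p^{m-k})\,\rchi(p^{m-k}),
\]
and $c(p)=0$ because $e_1=-b_2(p)$.

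Using $|\lambda_{\mathrm{sym}^jf}(p^r)|\le\binom{r+j}{j}$ and $l_2(p^r)\le 2p^r$, you get $|c(p^m)|\le 2^{2j+3}\binom{m+j}{j}p^{m}$ uniformly in $p$. Therefore $u_p\ll_j\sum_{m\ge2}m^jp^{m(1-\sigma)}\ll_j p^{2(1-\sigma)}$ uniformly for $\sigma\ge\tfrac32+\epsilon$ and every prime $p$. With this, your concluding paragraph goes through unchanged.
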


\begin{lemma} \label{lem:F3'}
Let \(f\) be a normalised primitive holomorphic cusp form of weight \(k\) for \(\operatorname{SL}(2,\mathbb{Z})\), 
and let \(\lambda_{\operatorname{sym}^{j}f}(n)\) denote the \(n\)-th normalised Fourier coefficient 
of the \(j\)-th symmetric power \(L\)-function attached to \(f\) and let $\rchi$ be a Dirichlet character modulo $q$.  

Define
\[
F_{j_1}^{(3)}(s, \rchi)=\sum_{n=1}^{\infty}\frac{\lambda_{\operatorname{sym}^{j}f}(n)\,l_3(n)\rchi(n)}{n^{s}}, \qquad \Re(s)>3,
\]
where \(l_3(n)\) is given by \eqref{eq:l_3(n)&v_3(n)}. Then \(F_{j_1}^{(3)}(s,\rchi)\) admits a factorisation
\[
F_{j_1}^{(3)}(s, \rchi)=G_{j_1}^{(3)}(s, \rchi)\,H_{j_1}^{(3)}(s, \rchi),
\]
in which
\[
G_{j_1}^{(3)}(s,\rchi):=L\!\bigl(s,\operatorname{sym}^{j}f \otimes \rchi_ 4\rchi\bigr)\; 
        L\!\bigl(s-2,\operatorname{sym}^{j}f\otimes\rchi\bigr)
\]
and \(H_{j_1}^{(3)}(s, \rchi)\) is a Dirichlet series that converges absolutely and uniformly 
in the half-plane \(\Re(s)>\frac{5}{2}\).
\end{lemma}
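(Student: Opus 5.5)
The proof follows the template of Lemma~\ref{lem:F1}: compare Euler factors of $F_{j_1}^{(3)}(s,\rchi)$ and $G_{j_1}^{(3)}(s,\rchi)$ prime by prime, and show the quotient is an absolutely convergent Dirichlet series beyond $\Re(s)=\frac52$.

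\emph{Step 1: absolute convergence and Euler product.} Since $l_3(n)\le \sum_{d\mid n}d^2\ll n^{2+\epsilon}$ and $\lambda_{\mathrm{sym}^jf}(n)\ll n^\epsilon$ by \eqref{eq:lambdaSymbound}, the coefficients of $F_{j_1}^{(3)}$ are $\ll n^{2+\epsilon}$. Hence the series converges absolutely for $\Re(s)>3$. The function $l_3=\mathrm{id}^2*\rchi_4$ is a Dirichlet convolution of multiplicative functions, so $\lambda_{\mathrm{sym}^jf}\,l_3\,\rchi$ is multiplicative and $F_{j_1}^{(3)}$ has an Euler product there.

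\emph{Step 2: matching at primes.} Define $b_3(n)$ by
\[
\sum_{n\ge1}\frac{b_3(n)}{n^s}=L(s,\mathrm{sym}^jf\otimes\rchi_4\rchi)\,L(s-2,\mathrm{sym}^jf\otimes\rchi),
\]
so that
\[
b_3(p)=\lambda_{\mathrm{sym}^jf}(p)\rchi(p)\bigl(\rchi_4(p)+p^2\bigr).
\]
Since $l_3(p)=\rchi_4(p)+p^2$, we get $b_3(p)=\lambda_{\mathrm{sym}^jf}(p)l_3(p)\rchi(p)$. Moreover $|b_3(n)|\le\sum_{d\mid n}d^{\epsilon}(n/d)^{2+\epsilon}\ll n^{2+\epsilon}$.

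\emph{Step 3: the quotient.} This is the only point needing care, and it is simplest to normalise first. Put $s=w+2$, or equivalently divide the $n$-th coefficients by $n^2$. The normalised coefficients $\tilde a(n)=\lambda_{\mathrm{sym}^jf}(n)l_3(n)\rchi(n)n^{-2}$ and $\tilde b(n)=b_3(n)n^{-2}$ are then $\ll n^{\epsilon}$ and agree at primes. The argument of Lemma~\ref{lem:F1} now applies verbatim with $w$ in place of $s$. For each $p$ write
\[
\frac{1+A}{1+B}=1+\sum_{m\ge2}\frac{\tilde c(p^m)}{p^{mw}},\qquad \tilde c(p^m)\ll p^{m\epsilon},
\]
where there is no $p^{-w}$ term because the prime coefficients agree. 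Define $H_{j_1}^{(3)}(w+2,\rchi)=\sum_n \tilde c(n)n^{-w}$. As in Lemma~\ref{lem:F1}, $\sum_p\bigl(|\tilde c(p^2)|p^{-2\sigma}+\sum_{m\ge3}|\tilde c(p^m)|p^{-m\sigma}\bigr)\ll\sum_p p^{-2\sigma+2\epsilon}$ converges for $\Re(w)>\frac12$. This gives absolute and uniform convergence of $H_{j_1}^{(3)}(s,\rchi)$ in $\Re(s)>\frac52$, and the factorisation $F_{j_1}^{(3)}=G_{j_1}^{(3)}H_{j_1}^{(3)}$ holds first for $\Re(s)>3$.

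The main obstacle is that, read naively, the bound on $B$ in Lemma~\ref{lem:F1} requires $|B|<1$, which fails without normalisation because of the $p^2$ growth. Shifting by $2$ fixes this: in the $w$-variable the geometric-series expansion is valid for $\Re(w)>1+2\epsilon$. The resulting identity of Dirichlet series then gives $H_{j_1}^{(3)}$ as a product whose convergence region, $\Re(w)>\frac12$, is read off from the $p^{-2w}$ terms.
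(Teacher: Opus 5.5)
Your proposal is correct and is essentially the paper's argument. The paper proves this lemma only by saying it follows the template of Lemma~\ref{lem:F1}, and your prime-by-prime matching $\lambda_{\mathrm{sym}^jf}(p)l_3(p)\rchi(p)=\lambda_{\mathrm{sym}^jf}(p)\rchi(p)\bigl(p^2+\rchi_4(p)\bigr)$ against the coefficients of $G_{j_1}^{(3)}(s,\rchi)$, followed by the local quotient argument, is exactly that template. Your normalisation $s=w+2$ is the shift, left implicit in the paper, that makes Lemma~\ref{lem:F1} literally applicable. The bound $\tilde c(p^m)\ll p^{m\epsilon}$ that you carry over (asserted without proof in Lemma~\ref{lem:F1}) is genuinely uniform here: $1/(1+B)$ is a polynomial of degree $2j+2$ in $p^{-w}$ with coefficients bounded in terms of $j$, and $|\tilde a(p^m)|\ll\binom{m+j}{j}$.
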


\begin{lemma} \label{lem:F3''}
Let \(f\) be a normalised primitive holomorphic cusp form of weight \(k\) for \(\operatorname{SL}(2,\mathbb{Z})\), 
and let \(\lambda_{\operatorname{sym}^{j}f}(n)\) denote the \(n\)-th normalised Fourier coefficient 
of the \(j\)-th symmetric power \(L\)-function attached to \(f\) and let $\rchi$ be a Dirichlet character modulo $q$.  

Define
\[
F_{j_2}^{(3)}(s, \rchi)=\sum_{n=1}^{\infty}\frac{\lambda_{\operatorname{sym}^{j}f}(n)\,v_3(n)\rchi(n)}{n^{s}}, \qquad \Re(s)>3,
\]
where \(v_3(n)\) is given by \eqref{eq:l_3(n)&v_3(n)}. Then \(F_{j_2}^{(3)}(s,\rchi)\) admits a factorisation
\[
F_{j_2}^{(3)}(s, \rchi)=G_{j_2}^{(3)}(s, \rchi)\,H_{j_2}^{(3)}(s, \rchi),
\]
in which
\[
G_{j_2}^{(3)}(s,\rchi):=L\!\bigl(s-2,\operatorname{sym}^{j}f \otimes \rchi_ 4\rchi\bigr)\; 
        L\!\bigl(s,\operatorname{sym}^{j}f\otimes\rchi\bigr)
\]
and \(H_{j_2}^{(3)}(s, \rchi)\) is a Dirichlet series converging absolutely and uniformly 
in the half-plane \(\Re(s)>\frac{5}{2}\).
\end{lemma}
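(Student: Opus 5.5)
The plan is to follow the template of the proof of Lemma \ref{lem:F1}: match the Euler factors of $F_{j_2}^{(3)}(s,\rchi)$ and $G_{j_2}^{(3)}(s,\rchi)$ at primes, and show that the quotient has Euler factors of the form $1+O(p^{-2\sigma+4+\epsilon})$.

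First, for primes $p$ we have $v_3(p)=1+p^2\rchi_4(p)$. Also $v_3$ is multiplicative, since it is the Dirichlet convolution of $1$ with $d^2\rchi_4(d)$. Moreover $|v_3(n)|\le n^2 d(n)\ll n^{2+\epsilon}$, so by \eqref{eq:lambdaSymbound} the coefficients of $F_{j_2}^{(3)}(s,\rchi)$ are $\ll n^{2+\epsilon}$. Hence $F_{j_2}^{(3)}(s,\rchi)$ converges absolutely for $\Re(s)>3$ and has an Euler product there.

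Next, let $b_3(n)$ be the coefficients of
\[
G_{j_2}^{(3)}(s,\rchi)=\sum_{n\ge1}\frac{b_3(n)}{n^s}.
\]
Since $L(s-2,\mathrm{sym}^jf\otimes\rchi_4\rchi)$ has coefficients $\lambda_{\mathrm{sym}^jf}(n)\rchi_4\rchi(n)n^2$, we get
\[
b_3(n)=\sum_{d\mid n}\lambda_{\mathrm{sym}^jf}(d)\rchi_4\rchi(d)\,d^2\,\lambda_{\mathrm{sym}^jf}(n/d)\rchi(n/d).
\]
This gives $b_3(n)\ll n^{2+\epsilon}$ and
\[
b_3(p)=\lambda_{\mathrm{sym}^jf}(p)\rchi(p)\bigl(1+\rchi_4(p)p^2\bigr)=\lambda_{\mathrm{sym}^jf}(p)v_3(p)\rchi(p),
\]
so the coefficients of $p^{-s}$ in the two Euler factors agree.

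Then, at each prime, write $A$ and $B$ for the tails of the two local factors and form $(1+A)/(1+B)$. For $\Re(s)>3+2\epsilon$ we have $|B|\le\sum_m p^{m(2+\epsilon-\sigma)}<1$, so the geometric expansion is valid. The $p^{-s}$ terms cancel, which leaves
\[
1+\sum_{m\ge2}\frac{c(p^m)}{p^{ms}},\qquad c(p^m)\ll p^{m(2+\epsilon)}.
\]
Define $H_{j_2}^{(3)}=F_{j_2}^{(3)}/G_{j_2}^{(3)}=\sum c(n)n^{-s}$, which is multiplicative by construction.

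Finally, for convergence we bound
\[
\sum_{m\ge2}\frac{|c(p^m)|}{p^{m\sigma}}\ll p^{-2(\sigma-2-\epsilon)}.
\]
The product over $p$ converges once $2(\sigma-2-\epsilon)>1$, that is, for $\sigma>5/2+\epsilon$. This gives absolute and uniform convergence of $H_{j_2}^{(3)}(s,\rchi)$ in $\Re(s)>5/2$, with $H_{j_2}^{(3)}\ll 1$ there.

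The main obstacle is the growth of $c(p^m)$: the crude bound $p^{m(2+\epsilon)}$ needs to be justified uniformly in $m$. This holds because both $v_3(p^m)$ and $b_3(p^m)$ are $\ll (m+1)^{j+2}p^{2m}$, and the expansion of $(1+A)/(1+B)$ preserves this up to $p^{m\epsilon}$. Alternatively, one can avoid the expansion by writing the local factors explicitly as rational functions in $p^{-s}$ via \eqref{eq:lambdaatp}. The rest is routine, identical to Lemma \ref{lem:F1} with $\sigma$ shifted by $2$.
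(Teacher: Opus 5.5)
Your proposal is correct and is essentially the paper's argument: the paper proves this lemma only by saying it follows the proof of Lemma~\ref{lem:F1}, and that proof is exactly your template. You match $b_3(p)=\lambda_{\mathrm{sym}^jf}(p)v_3(p)\rchi(p)$, form the local quotients $(1+A)/(1+B)$, and use $c(p^m)\ll p^{m(2+\epsilon)}$ to get absolute convergence for $\Re(s)>\frac{5}{2}$.

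Your alternative of noting that $1/(1+B)=\prod_{i=0}^{j}\bigl(1-\alpha(p)^{j-i}\beta(p)^{i}\rchi_4\rchi(p)p^{2-s}\bigr)\bigl(1-\alpha(p)^{j-i}\beta(p)^{i}\rchi(p)p^{-s}\bigr)$ is a polynomial in $p^{-s}$ is the clean way to get the uniform bound on $c(p^m)$. It removes the need for $|B|<1$, which the paper's geometric-series step assumes with an implied constant. One small correction: for $b_3(p^m)$ the polynomial factor in $m$ is $(m+1)^{2j+1}$ rather than $(m+1)^{j+2}$, which is harmless.
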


\begin{lemma} \label{lem:F4}
Let \(f\) be a normalised primitive holomorphic cusp form of weight \(k\) for \(\operatorname{SL}(2,\mathbb{Z})\), 
and let \(\lambda_{\operatorname{sym}^{j}f}(n)\) denote the \(n\)-th normalised Fourier coefficient 
of the \(j\)-th symmetric power \(L\)-function attached to \(f\) and let $\rchi$ be a Dirichlet character modulo $q$.  

Define
\[
F_{j}^{(4)}(s, \rchi)=\sum_{n=1}^{\infty}\frac{\lambda_{\operatorname{sym}^{j}f}(n)\,l_4(n)\rchi(n)}{n^{s}}, \qquad \Re(s)>4,
\]
where \(l_4(n)\) is given by \eqref{eq:l_4(n)}. Then \(F_{j}^{(4)}(s,\rchi)\) admits a factorisation
\[
F_{j}^{(4)}(s, \rchi)=G_{j}^{(4)}(s, \rchi)\,H_{j}^{(4)}(s, \rchi),
\]
in which
\[
G_{j}^{(4)}(s,\rchi):=L\!\bigl(s,\operatorname{sym}^{j}f \otimes \rchi\bigr)\; 
        L\!\bigl(s-3,\operatorname{sym}^{j}f\otimes\rchi\bigr)
\]
and \(H_{j}^{(4)}(s, \rchi)\) is Dirichlet a series converges absolutely and uniformly 
in the half-plane \(\Re(s)>\frac{7}{2}\).
\end{lemma}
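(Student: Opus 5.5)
The plan is to repeat the proof of Lemma~\ref{lem:F1} almost verbatim, with the character $\rchi_4$ replaced by the twist $d\mapsto(-1)^{n+d}d^3$. Since $l_4$ is not multiplicative as written, first rewrite it in a multiplicative form. For odd $n$, every divisor $d$ is odd, so $(-1)^{n+d}=1$ and $l_4(n)=\sigma_3(n)$. For even $n$, the sign is $(-1)^{d}$. Hence $l_4(n)=\sigma_3(n)$ for odd $n$, and $l_4(2^a m)=\sigma_3(m)\,(\sigma_3(2^a)-2)$ up to sign conventions for $a\ge1$, $m$ odd. In particular $l_4$ is multiplicative. At odd primes, $l_4(p)=1+p^3$, and $l_4(p)\ll p^{3+\epsilon}$ in general, so the twisted coefficient satisfies $\lambda_{\mathrm{sym}^jf}(n)l_4(n)\rchi(n)\ll n^{3+\epsilon}$. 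Thus $F_j^{(4)}(s,\rchi)$ converges absolutely and has an Euler product for $\Re(s)>4$.

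Next define $b_4(n)$ by
\[
\sum_{n\ge1} \frac{b_4(n)}{n^s}=L(s,\mathrm{sym}^jf\otimes\rchi)\,L(s-3,\mathrm{sym}^jf\otimes\rchi).
\]
Then $b_4(p)=\lambda_{\mathrm{sym}^jf}(p)\rchi(p)(1+p^3)$, which agrees with the $p$-th coefficient of $F_j^{(4)}$ at all odd primes $p$. The prime $2$ is a single Euler factor. It can be absorbed into $H_j^{(4)}$, since both local factors at $2$ are finite nonvanishing analytic functions for $\Re(s)>7/2$. To see this, check that the local factor of $G$ at $2$, namely $\prod_i(1-\alpha^{j-i}\beta^i\rchi(2)2^{3-s})^{-1}$, has no zeros, and that $F$'s local factor is holomorphic there; both hold by Deligne's bound \eqref{eq:deligne2}. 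I would also note that $|b_4(n)|\le\sum_{d\mid n} d^{\epsilon}(n/d)^{3+\epsilon}\ll n^{3+\epsilon}$.

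Then form the quotient $(1+A_p)/(1+B_p)$ prime by prime, exactly as in Lemma~\ref{lem:F1}. The terms with $p^{-s}$ cancel, so the local factor is $1+\sum_{m\ge2}c(p^m)p^{-ms}$ with $c(p^m)\ll p^{3m+m\epsilon}$. Set $\sigma'=\sigma-3$. Then
\[
\sum_p u_p\ll\sum_p p^{-2\sigma'+2\epsilon},
\]
which converges for $\sigma'>1/2$, that is, for $\Re(s)>7/2$. This gives absolute and uniform convergence of $H_j^{(4)}(s,\rchi)=F_j^{(4)}/G_j^{(4)}$ there.

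The main obstacle is the normalisation. One must check carefully that the quotient's Euler factors really have no $p^{-s}$ term, i.e.\ that $b_4(p)$ equals $\lambda_{\mathrm{sym}^jf}(p)l_4(p)\rchi(p)$ with the right sign at odd $p$. One must also check that the higher coefficients grow only like $p^{3m}$, not $p^{3m}$ times a large power of $m$ that would spoil convergence. The latter holds because $d_{j+1}$-type bounds give $p^{m\epsilon}$ uniformly. One must also keep $|B_p|<1$ on the region used to expand $1/(1+B_p)$. For large $p$ this follows from $\sigma>4$ initially, followed by analytic continuation of the product once convergence for $\Re(s)>7/2$ is established. The finitely many small primes are treated as nonvanishing holomorphic factors.
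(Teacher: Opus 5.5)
Your proposal is correct and follows the route the paper intends. The paper only says the proof goes as in Lemma~\ref{lem:F1}: match the Euler factors at primes, observe that the quotient's local factors start at $p^{-2s}$, and use $c(p^m)\ll p^{3m+m\epsilon}$ to get convergence of $\sum_p p^{-2(\sigma-3)+2\epsilon}$ for $\Re(s)>7/2$. Your separate treatment of $p=2$ is a detail the paper's sketch skips: there $l_4(2)=7\neq 1+2^3$, so the first-order terms need not cancel when $q$ is odd. Absorbing that single Euler factor into $H_j^{(4)}$ is the right fix, since it is an absolutely convergent series in $2^{-s}$ for $\Re(s)>3$ multiplied by a polynomial in $2^{-s}$.
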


\begin{lemma}\label{lem:F5'}
Let \(f\) be a normalised primitive holomorphic cusp form of weight \(k\) for \(\operatorname{SL}(2,\mathbb{Z})\), 
and let \(\lambda_{\operatorname{sym}^{j}f}(n)\) denote the \(n\)-th normalised Fourier coefficient 
of the \(j\)-th symmetric power \(L\)-function attached to \(f\) and let $\rchi$ be a Dirichlet character modulo $q$.  

Define
\[
F_{j_1}^{(5)}(s, \rchi)=\sum_{n=1}^{\infty}\frac{\lambda_{\operatorname{sym}^{j}f}(n)\,l_5(n)\rchi(n)}{n^{s}} \qquad (\Re(s)>5),
\]
where \(l_5(n)\) is given by \eqref{eq:l_5(n)&v_5(n)}. Then \(F_{j_1}(s, \rchi)\) admits a factorisation
\[
F_{j_1}(s, \rchi)=G_{j_1}^{(5)}(s, \rchi)\,H_{j_1}^{(5)}(s, \rchi),
\]
in which 
\[
G_{j_1}^{(5)}(s,\rchi):=L\!\bigl(s-4,\operatorname{sym}^{j}f \otimes \rchi\bigr)\; 
        L\!\bigl(s,\operatorname{sym}^{j}f\otimes\rchi_4\rchi\bigr)
\]
and  \(H_{j_1}^{(5)}(s, \rchi)\) is a Dirichlet series that converges absolutely and uniformly 
in the half-plane \(\Re(s)>\frac{9}{2}\).
\end{lemma}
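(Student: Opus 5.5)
The plan follows the proof of Lemma~\ref{lem:F1}, changing only the prime-level bookkeeping.

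\emph{Step 1: absolute convergence of $F_{j_1}^{(5)}$.} By \eqref{eq:lambdaSymbound} and $|l_5(n)|\le\sum_{d\mid n}d^4\le n^{4}d(n)\ll n^{4+\epsilon}$, the coefficients satisfy $\lambda_{\mathrm{sym}^jf}(n)l_5(n)\rchi(n)\ll n^{4+\epsilon}$. Hence $F_{j_1}^{(5)}(s,\rchi)$ converges absolutely for $\Re(s)>5$.

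\emph{Step 2: Euler product.} The function $l_5=\rchi_4*\mathrm{id}^4$ is a Dirichlet convolution of multiplicative functions, so it is multiplicative. The product $\lambda_{\mathrm{sym}^jf}l_5\rchi$ is therefore multiplicative, and $F_{j_1}^{(5)}$ has an Euler product in $\Re(s)>5$. At a prime,
\[
l_5(p)=\rchi_4(p)+p^4 .
\]

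\emph{Step 3: match $G_{j_1}^{(5)}$ at primes.} Define $b_5(n)$ by $\sum_n b_5(n)n^{-s}=G_{j_1}^{(5)}(s,\rchi)$. Its coefficient at a prime is
\[
b_5(p)=\lambda_{\mathrm{sym}^jf}(p)\rchi(p)p^4+\lambda_{\mathrm{sym}^jf}(p)\rchi_4\rchi(p)=\lambda_{\mathrm{sym}^jf}(p)l_5(p)\rchi(p).
\]
So the two Euler factors agree in the $p^{-s}$ term. Also $b_5=(\lambda_{\mathrm{sym}^jf}\rchi\,\mathrm{id}^4)*(\lambda_{\mathrm{sym}^jf}\rchi_4\rchi)$, which gives $b_5(n)\ll n^{4+\epsilon}$. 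Thus $G_{j_1}^{(5)}$ converges absolutely for $\Re(s)>5$, and for $\Re(s)>5+2\epsilon$ each factor $1+B_p$ with $B_p=\sum_{m\ge1}b_5(p^m)p^{-ms}$ satisfies $|B_p|<1$.

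\emph{Step 4: define $H_{j_1}^{(5)}$ and bound its coefficients.} Set
\[
H_{j_1}^{(5)}:=F_{j_1}^{(5)}/G_{j_1}^{(5)}=\prod_p(1+A_p)/(1+B_p).
\]
Expanding as in Lemma~\ref{lem:F1}, the $p^{-s}$ terms cancel, leaving $1+\sum_{m\ge2}c(p^m)p^{-ms}$. The coefficients obey $c(p^m)\ll p^{4m+m\epsilon}$, since every term in the expansion is a product of coefficients bounded by $p^{4(\cdot)+\epsilon(\cdot)}$ with total exponent $m$.

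\emph{Step 5: region of convergence.} With $\sigma=\Re(s)$, the tail satisfies
\[
\sum_{m\ge2}|c(p^m)|p^{-m\sigma}\ll p^{-2(\sigma-4-\epsilon)}.
\]
Summing over $p$ converges when $2(\sigma-4-\epsilon)>1$, that is $\sigma>\tfrac92+\epsilon$. So the product defining $H_{j_1}^{(5)}$ converges absolutely and uniformly in $\Re(s)>\tfrac92$, and $F_{j_1}^{(5)}=G_{j_1}^{(5)}H_{j_1}^{(5)}$ holds there by analytic continuation.

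\emph{Main obstacle.} The delicate point is Step 4's coefficient bound $c(p^m)\ll p^{4m+\epsilon m}$ uniformly in $m$. The expansion of $(1+A_p)(1+B_p)^{-1}$ involves powers of $B_p$, whose coefficients could a priori grow with the number of terms. I would control this the same way Lemma~\ref{lem:F1} does, by shifting variables. Write $s=4+w$; then $A_p$ and $B_p$ become series in $p^{-w}$ whose coefficients are $\ll p^{m\epsilon}$. This reduces the claim exactly to the weight-zero situation already handled in Lemma~\ref{lem:F1}, giving convergence for $\Re(w)>\tfrac12$, i.e.\ $\Re(s)>\tfrac92$.
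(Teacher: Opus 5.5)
Your route is the paper's route: the paper proves this lemma only by saying it ``follows the same process'' as Lemma~\ref{lem:F1}. Your Steps~1--3 (multiplicativity, $l_5(p)=\rchi_4(p)+p^4$, matching at $p^{-s}$, $b_5(n)\ll n^{4+\epsilon}$) are fine. The gap is exactly the step you flagged, and appealing to Lemma~\ref{lem:F1} does not close it, because that proof never establishes its bound $c_p(n)\ll n^{\epsilon}$; it only asserts it.

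The shift $s=4+w$ removes the $p^{4m}$ scale but does nothing about the growth you worried about. Expanding $(1+B_p)^{-1}=\sum_k(-B_p)^k$ with $|b(p^m)|\le Cp^{m\epsilon}$ only gives $|c(p^m)|\le K^m p^{m\epsilon}$, with $K$ depending on $C$. That is harmless for large $p$, but it does not give absolute convergence of the local factors at small primes throughout $\Re(w)>\frac12$. For the same reason, the bound $|B_p|<1$ in your Step~3 is unjustified at small $p$ once the implied constant is kept. In fact the abstract principle you invoke (coefficients $\ll p^{m\epsilon}$ plus agreement at $m=1$ imply convergence for $\Re(w)>\frac12$) is false. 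Take $A_2=0$, $B_2=-4\cdot 2^{-2w}$ and all other local factors trivial: both hypotheses hold, yet $(1+A_2)/(1+B_2)$ has a pole at $w=1$. So you must use some structural information about $B_p$.

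The missing input is that $1+B_p$ is the local factor of $G_{j_1}^{(5)}$, so its reciprocal is an explicit polynomial and no geometric series is needed. Indeed $(1+B_p)^{-1}=\prod_{i=0}^{j}\bigl(1-\alpha^{j-i}(p)\beta^{i}(p)\rchi(p)p^{4-s}\bigr)\bigl(1-\alpha^{j-i}(p)\beta^{i}(p)\rchi_4\rchi(p)p^{-s}\bigr)$ has degree $2j+2$ in $p^{-s}$. By \eqref{eq:deligne2}, its $p^{-ms}$-coefficient has modulus at most $\binom{2j+2}{m}p^{4m}$.

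The coefficients of $1+A_p$ are $\lambda_{\mathrm{sym}^jf}(p^t)l_5(p^t)\rchi(p^t)\ll (t+1)^jp^{4t}$. Hence each $c(p^m)$ is a sum of at most $2j+3$ products, and $|c(p^m)|\ll_j (m+1)^jp^{4m}\ll_{j,\epsilon}p^{(4+\epsilon)m}$ uniformly in $p$ and $m$. Together with $c(p)=0$, your Step~5 then goes through unchanged.

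Pushing this further, set $X=\rchi(p)p^{-s}$ and $P(Y)=\prod_{i=0}^{j}(1-\alpha^{j-i}(p)\beta^{i}(p)Y)$. One finds $H_p=\bigl(\rchi_4(p)P(p^4X)-p^4P(\rchi_4(p)X)\bigr)/\bigl(\rchi_4(p)-p^4\bigr)$. This is a polynomial of degree at most $j+1$, with $c(p^2)=\rchi^2(p)\rchi_4(p)p^4\bigl(\lambda_{\mathrm{sym}^jf}(p^2)-\lambda_{\mathrm{sym}^jf}^2(p)\bigr)$ and $c(p^r)\ll_j p^{4(r-1)}$. So $H_{j_1}^{(5)}$ actually converges absolutely for $\Re(s)>4-\frac{3}{j+1}$, a larger half-plane than the lemma claims. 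The paper's Lemma~\ref{lem:F1}, on which both its proof and yours rest, needs the same repair.
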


\begin{lemma}\label{lem:F5''}
        Let $f$ be a normalised primitive holomorphic cusp form of weight $k$ for $SL(2,\mathbb{Z})$, $\rchi$ be a Dirichlet character modulo $q$ and let $\lambda_{\mathrm{sym}^{j}f}(n)$ be the $n$-th normalised Fourier coefficient of the $j^{th}$ symmetric power $L$-function associated to $f$. Define
        \[
        F_{j_2}^{(5)}(s, \rchi)=\sum_{n=1}^{\infty}\frac{\lambda_{\mathrm{sym}^{j}f}(n)v_5(n)\rchi(n)}{n^{s}}, \quad \Re(s) > 5,
        \]
        where $v_5(n)$ is given by \eqref{eq:l_5(n)&v_5(n)}. Then $F_{j_2}^{(5)}(s,\rchi)$ admits a factorisation
        \[
        F_{j_2}^{(5)}(s,\rchi)=G_{j_2}^{(5)}(s,\rchi)H_{j_2}^{(5)}(s,\rchi),
        \]
        in which
        \[
        G_{j_2}^{(5)}(s,\rchi):=L(s, \mathrm{sym}_{j}f \otimes \rchi)L(s-4,\mathrm{sym}_jf \otimes\rchi_ 4\rchi)
        \]
        and $H_{j_2}^{(5)}(s,\rchi)$ is a Dirichlet series that converges uniformly and absolutely in the half plane $\Re(s)>\frac{9}{2}$.
    \end{lemma}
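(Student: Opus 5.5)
We follow the template of Lemma~\ref{lem:F1}: compare the Euler product of $F_{j_2}^{(5)}(s,\rchi)$ prime by prime with that of $G_{j_2}^{(5)}(s,\rchi)$, and show that the quotient is an Euler product whose local factors are $1+O(p^{-2(\sigma-4)+\epsilon})$.

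First, since $v_5(n)\ll n^{4+\epsilon}$ and $\lambda_{\mathrm{sym}^jf}(n)\ll n^{\epsilon}$ by \eqref{eq:lambdaSymbound}, the series $F_{j_2}^{(5)}$ converges absolutely for $\Re(s)>5$. The function $n\mapsto\lambda_{\mathrm{sym}^jf}(n)v_5(n)\rchi(n)$ is multiplicative, because $v_5=\mathbf{1}*(\rchi\cdot\mathrm{id}^4)$ is a Dirichlet convolution of multiplicative functions. Hence $F_{j_2}^{(5)}$ has an Euler product there, with local factor $1+A_p$ where $A_p=\sum_{m\ge1}\lambda_{\mathrm{sym}^jf}(p^m)v_5(p^m)\rchi(p^m)p^{-ms}$. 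Here $\rchi$ in $v_5$ is the mod-$4$ character $\rchi_4$, as in $r_{10}$ and the definition of $G_{j_2}^{(5)}$.

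Next, write $G_{j_2}^{(5)}(s,\rchi)=\sum b(n)n^{-s}$. The coefficient $b=(\lambda_{\mathrm{sym}^jf}\rchi)*(\lambda_{\mathrm{sym}^jf}\rchi_4\rchi\cdot \mathrm{id}^4)$ is multiplicative with $b(n)\ll n^{4+\epsilon}$. The key identity is at primes:
\[
b(p)=\lambda_{\mathrm{sym}^jf}(p)\rchi(p)+\lambda_{\mathrm{sym}^jf}(p)\rchi_4(p)\rchi(p)p^4=\lambda_{\mathrm{sym}^jf}(p)\rchi(p)\bigl(1+\rchi_4(p)p^4\bigr),
\]
and this equals $\lambda_{\mathrm{sym}^jf}(p)v_5(p)\rchi(p)$ since $v_5(p)=1+\rchi_4(p)p^4$. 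Set $B_p=\sum_{m\ge1}b(p^m)p^{-ms}$. For $\Re(s)>5+2\epsilon$ we have $|B_p|<1$, so $(1+A_p)/(1+B_p)=1+\sum_{m\ge2}c(p^m)p^{-ms}$. The $p^{-s}$ term cancels, and the coefficients satisfy $c(p^m)\ll p^{m(4+\epsilon)}$. This bound holds because each of $A_p,B_p$ has $m$-th coefficient $\ll p^{m(4+\epsilon)}$, and expanding the geometric series only multiplies coefficients of total degree $m$.

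Define $H_{j_2}^{(5)}=\prod_p(1+\sum_{m\ge2}c(p^m)p^{-ms})$, so that $F_{j_2}^{(5)}=G_{j_2}^{(5)}H_{j_2}^{(5)}$ for $\Re(s)>5$. For $\sigma=\Re(s)$, summing the geometric tail gives
\[
\sum_{m\ge2}|c(p^m)|p^{-m\sigma}\ll \sum_{m\ge2}p^{-m(\sigma-4-\epsilon)}\ll p^{-2(\sigma-4-\epsilon)},
\]
provided $\sigma-4-\epsilon>0$. The sum over $p$ of these bounds converges when $2(\sigma-4-\epsilon)>1$, i.e. $\sigma>\frac92+\epsilon$. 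So the product converges absolutely and uniformly in $\Re(s)\ge\frac92+2\epsilon$ for every $\epsilon$, hence on $\Re(s)>\frac92$, where it is bounded. By uniqueness of analytic continuation, this series agrees with $F/G$ on the overlap, which gives the lemma.

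The main obstacle is the bookkeeping that justifies $c(p^m)\ll p^{m(4+\epsilon)}$ uniformly in $p$. In particular, the expansion of $1/(1+B_p)$ is valid only where $|B_p|<1$, so one must verify the coefficientwise bound formally before extending to $\Re(s)>\frac92$. To handle this, view the local identity as one between formal power series in $X=p^{-s}$ and bound coefficients via majorants such as $(1-p^{4+\epsilon}X)^{-C}$.
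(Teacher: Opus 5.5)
Your proposal is correct and follows the paper's route: the paper says this lemma is proved exactly as Lemma~\ref{lem:F1}, by matching the prime coefficients $\lambda_{\mathrm{sym}^jf}(p)\rchi(p)\bigl(1+\rchi_4(p)p^4\bigr)$, bounding the local quotient coefficients by $p^{m(4+\epsilon)}$ with $c(p)=0$, and summing $p^{-2(\sigma-4-\epsilon)}$ over primes to reach $\Re(s)>\frac92$. Your suggestion to treat the local quotient as a formal power series also tightens the paper's loose claim that $|B_p|<1$ at every prime. Indeed $1/(1+B_p)$ is a polynomial of degree $2j+2$ in $p^{-s}$, which gives $c(p^m)\ll_j m^{j}p^{4m}$ directly.
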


\begin{lemma}\label{lem:F6}
Let \(f\) be a normalised primitive holomorphic cusp form of weight \(k\) for \(\operatorname{SL}(2,\mathbb{Z})\), $\rchi$ be a Dirichlet character modulo $q$ and let \(\lambda_{\operatorname{sym}^{j}f}(n)\) denote the \(n\)-th normalised Fourier coefficient 
of the \(j\)-th symmetric power \(L\)-function attached to \(f\).  

Define
\[
F_{j}^{(6)}(s,\rchi)=\sum_{n=1}^{\infty}\frac{\lambda_{\operatorname{sym}^{j}f}(n)\,l_6(n)\rchi(n)}{n^{s}}, \qquad \Re(s)>6,
\]
where \(l_6(n)\) is a given by \eqref{eq:l_6(n)}. Then \(F_{j}^{(6)}(s, \rchi)\) admits a factorisation
\[
F_{j}^{(6)}(s,\rchi)=G_{j}^{(6)}(s,\rchi)\,H_{j}^{(6)}(s,\rchi),
\]
in which 
\[
G_{j}^{(6)}(s,\rchi):=L\!\bigl(s-5,\operatorname{sym}^{j}f \otimes \rchi\bigr)\; 
        L\!\bigl(s,\operatorname{sym}^{j}f \otimes \rchi\bigr)
\]
and \(H_{j}^{(6)}(s,\rchi)\) is a Dirichlet series that converges absolutely and uniformly 
in the half-plane \(\Re(s)>\frac{11}{2}\).
\end{lemma}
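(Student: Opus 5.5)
The argument for Lemma~\ref{lem:F6} repeats the Euler-product comparison used in the proof of Lemma~\ref{lem:F1}. The only input specific to this case is the value of $l_6$ at primes.

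\textbf{Step 1: absolute convergence.} Since $|\lambda_{\mathrm{sym}^jf}(n)| \ll n^\epsilon$ by \eqref{eq:lambdaSymbound}, and $|l_6(n)| \le \sum_{d\mid n} d^5 \ll n^{5+\epsilon}$, the series $F_j^{(6)}(s,\rchi)$ converges absolutely for $\Re(s)>6$.

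\textbf{Step 2: multiplicativity.} The factor $(-1)^{n+d+n/d-1}$ is not obviously multiplicative, so this needs a check.
\begin{itemize}
\item For odd $n$, every divisor $d$ and cofactor $n/d$ is odd, so the exponent $n+d+n/d-1$ is even. Hence $l_6(n)=\sigma_5(n)$.
\item For a prime $p$, the divisors are $d=1$ and $d=p$. In both cases the exponent reduces to $2p$, which is even. Hence $l_6(p)=1+p^5$ for every prime $p$, including $p=2$.
\end{itemize}
For even $n$ the sign is not automatic. I would either check that $l_6$ is multiplicative directly from the definition, or note that any discrepancy occurs only at the single prime $2$. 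In the latter case the $2$-factor is a finite Euler factor, holomorphic and bounded in $\Re(s)>11/2$, and it can be absorbed into $H_j^{(6)}$. This step is the only real obstacle. With it settled, $F_j^{(6)}$ has an Euler product in $\Re(s)>6$.

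\textbf{Step 3: matching at primes.} Let $b_6(n)$ be the coefficients of $G_j^{(6)}(s,\rchi)$, so that
\[
b_6 = \bigl(\lambda_{\mathrm{sym}^jf}\rchi\bigr) * \bigl(\mathrm{id}^5\,\lambda_{\mathrm{sym}^jf}\rchi\bigr).
\]
At a prime, $b_6(p)=\lambda_{\mathrm{sym}^jf}(p)\rchi(p)(1+p^5)$, which equals the $p$-th coefficient of $F_j^{(6)}$. We also have $|b_6(n)| \ll n^{5+\epsilon}$.

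\textbf{Step 4: the quotient.} For each prime $p$, form the quotient of local factors $(1+A)/(1+B)$ as in Lemma~\ref{lem:F1}. Write $s=5+w$. Both local series are of the form $\sum_m a(p^m)p^{-ms}$ with $a(p^m) \ll p^{m(5+\epsilon)}$, so in the variable $w$ the situation is exactly that of Lemma~\ref{lem:F1}. The linear terms cancel by Step 3. The quotient therefore equals
\[
1+\sum_{m\ge2} \frac{c(p^m)}{p^{ms}}, \qquad c(p^m) \ll p^{m(5+\epsilon)}.
\]

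\textbf{Step 5: convergence of $H_j^{(6)}$.} Summing over $p$ gives
\[
\sum_p p^{-2(\sigma-5-\epsilon)},
\]
which converges when $\sigma>11/2+\epsilon$. The bound $\sum_{m\ge3}$ is handled geometrically, as in Lemma~\ref{lem:F1}. Hence $H_j^{(6)}(s,\rchi)=F_j^{(6)}/G_j^{(6)}$ converges absolutely and uniformly in $\Re(s)>11/2$, and it is $\ll 1$ there.
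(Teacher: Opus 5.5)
Your proposal is correct and follows the paper, which only says that the proof is the same Euler-product comparison as in Lemma~\ref{lem:F1}; your substitution $s=5+w$ is exactly how that argument carries over to the $d^5$-weighted divisor sum, and it yields the abscissa $\frac{11}{2}$. The step you flag as open closes at once: $(-1)^{n+d+n/d-1}=-1$ precisely when both $d$ and $n/d$ are even, so the sign depends only on the $2$-adic valuations, which gives $l_6(2^a m)=l_6(2^a)\,\sigma_5(m)$ for odd $m$, i.e.\ $l_6$ is multiplicative.
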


\begin{lemma} \label{lem:F1*}
Let \(f\) be a normalised primitive holomorphic cusp form of weight \(k\) for \(\operatorname{SL}(2,\mathbb{Z})\), 
 let \(\lambda_{\operatorname{sym}^{j}f}(n)\) denote the \(n\)-th normalised Fourier coefficient 
of the \(j\)-th symmetric power \(L\)-function attached to \(f\) and let $\rchi$ be a Dirichlet character modulo $q$.  

Define
\[
F_{j}^{(*1)}(s, \rchi)=\sum_{n=1}^{\infty}\frac{\lambda_{\operatorname{sym}^{j}f}^2(n)\,l_1(n)\rchi(n)}{n^{s}}, \qquad \Re(s)>1,
\]
where \(l_1(n)\) is given by \eqref{eq:l_1(n)}. Then \(F_{j}^{(*1)}(s,\rchi)\) admits a factorisation
\[
F_{j}^{(*1)}(s, \rchi)=G_{j}^{(*1)}(s, \rchi)\,H_{j}^{(*1)}(s, \rchi),
\]
in which
\[
G_{j}^{(*1)}(s,\rchi):=L(s,\rchi)L(s,\rchi_ 4\rchi)\prod_{n=1}^{j}L\!\bigl(s,\operatorname{sym}^{2n}f \otimes \rchi\bigr)\; 
        L\!\bigl(s,\operatorname{sym}^{2n}f\otimes\rchi_ 4\rchi\bigr)
\]
and \(H_{j}^{(*1)}(s, \rchi)\) is a Dirichlet series that converges absolutely and uniformly 
in the half-plane \(\Re(s)>\frac{1}{2}\).
\end{lemma}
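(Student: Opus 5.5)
The factorisation follows the template of the proof of Lemma~\ref{lem:F1}: match the Euler factors of $F_j^{(*1)}(s,\rchi)$ and $G_j^{(*1)}(s,\rchi)$ at the first power of every prime, then show the quotient is a Dirichlet series whose local factors begin at $p^{-2s}$.

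\textbf{Step 1 (absolute convergence).} By \eqref{eq:lambdaSymbound} and $l_1(n)\le d(n)$, the coefficient $\lambda_{\mathrm{sym}^jf}^2(n)l_1(n)\rchi(n)$ is $\ll n^{\epsilon}$. Since $\lambda_{\mathrm{sym}^jf}^2$, $l_1$ and $\rchi$ are multiplicative, $F_j^{(*1)}(s,\rchi)$ converges absolutely for $\Re(s)>1$ and has an Euler product there.

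\textbf{Step 2 (prime coefficients).} Write $b(n)$ for the Dirichlet coefficients of $G_j^{(*1)}(s,\rchi)$. Reading off the $p^{-s}$ term of the product of the $2(j+1)$ degree-one factors gives
\[
b(p)=\bigl(\rchi(p)+\rchi_4\rchi(p)\bigr)\Bigl(1+\sum_{\ell=1}^{j}\lambda_{\mathrm{sym}^{2\ell}f}(p)\Bigr).
\]
By \eqref{lambdasquare} the second bracket equals $\lambda_{\mathrm{sym}^jf}^2(p)$, and $l_1(p)=1+\rchi_4(p)$. Hence $b(p)=\lambda_{\mathrm{sym}^jf}^2(p)\,l_1(p)\rchi(p)$, which is the key identity. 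Next, $b$ is a Dirichlet convolution of $2(j+1)$ sequences, each $\ll n^{\epsilon}$ by the bound $|\lambda_{\mathrm{sym}^{2\ell}f}(n)|\le d_{2\ell+1}(n)$. So $b(n)\ll n^{\epsilon}d_{2j+2}(n)\ll n^{2\epsilon}$.

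\textbf{Step 3 (the quotient).} Define $H_j^{(*1)}=F_j^{(*1)}/G_j^{(*1)}$ prime by prime. For each $p$ set
\[
A=\sum_{m\ge1}\frac{\lambda_{\mathrm{sym}^jf}^2(p^m)l_1(p^m)\rchi(p^m)}{p^{ms}},\qquad B=\sum_{m\ge1}\frac{b(p^m)}{p^{ms}}.
\]
Then $|B|<1$ for $\Re(s)>1+2\epsilon$. Expand $(1+A)/(1+B)=1+\sum_{m\ge2}c(p^m)p^{-ms}$; the $p^{-s}$ term vanishes by Step 2. Each $c(p^m)$ is a finite combination of products of the coefficients of $A$ and $B$, so $c(p^m)\ll p^{m\epsilon}$ with a constant uniform in $p$. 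Then
\[
\sum_p\sum_{m\ge2}|c(p^m)|p^{-m\sigma}\ll\sum_p p^{-2\sigma+2\epsilon},
\]
which converges for $\sigma>\tfrac12+\epsilon$. Therefore $H_j^{(*1)}(s,\rchi)=\prod_p(1+\sum_{m\ge2}c(p^m)p^{-ms})$ converges absolutely and uniformly in $\Re(s)>\tfrac12$ (strictly, in $\Re(s)\ge\tfrac12+\epsilon$ for each $\epsilon$) and is $\ll_\epsilon 1$ there.

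\textbf{Main obstacle.} The main obstacle is making the bound $c(p^m)\ll p^{m\epsilon}$ uniform in $p$. The cleanest way is to avoid expanding $1/(1+B)$ as a geometric series. Instead, work with the explicit local factors. Each $L(s,\mathrm{sym}^{2\ell}f\otimes\psi)$ has local factor $\prod_{i}(1-\gamma_i\psi(p)p^{-s})^{-1}$ with $|\gamma_i|=1$. So $1/(1+B)$ is a polynomial in $p^{-s}$ of degree $2\sum_{\ell=0}^{j}(2\ell+1)=2(j+1)^2$, with coefficients bounded in terms of $j$ alone. Multiplying this polynomial by $1+A$, whose coefficients are $\ll p^{m\epsilon}$, gives $c(p^m)\ll_j p^{m\epsilon}$ uniformly in $p$, so the convergence argument in Step 3 goes through.
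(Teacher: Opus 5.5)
Your proposal is correct and follows the paper's route: you match the $p^{-s}$ coefficients using \eqref{lambdasquare} and $l_1(p)=1+\rchi_4(p)$, then run the quotient argument of Lemma~\ref{lem:F1} on $(1+A)/(1+B)$. Your observation that $1/(1+B)$ is a polynomial in $p^{-s}$ of degree $2(j+1)^2$, with coefficients bounded in terms of $j$ alone, improves on the paper: it supplies the uniform-in-$p$ bound on $c(p^m)$ that the paper only asserts, and it makes the claim $|B|<1$ (which can fail at small primes) unnecessary, so you may simply drop that line.
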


\begin{proof}
    We observe that $\lambda_{\mathrm{sym}^{j}f}^{2}(n)l_1(n)\rchi(n)$ is multiplicative and hence
\begin{align} \label{eq:FjEuler}
F_{j}^{(*1)}(s)=\prod_{p}\left(1+\frac{\lambda_{\mathrm{sym}^{j}f}^{2}(p)l_1(p)\rchi(p)}{p^{s}}+\cdots+\frac{\lambda_{\mathrm{sym}^{j}f}^{2}(p^{m})l_1(p^{m})\rchi_ (p^m)}{p^{ms}}+\cdots\right).
\end{align}

Using \eqref{eq:hecke} and \eqref{lambdasquare}, we note that,
\begin{align*}
\lambda_{\mathrm{sym}^{j}f}^{2}(p)l_1(p)\rchi(p) &= \lambda_{f}^{2}(p^{j})\left(1+\rchi_ 4(p)\right)\rchi(p) \\
&= \left(1+\sum_{l=1}^{j}\lambda_{f}(p^{2l})\right)\left(1+\rchi_ 4(p)\right)\rchi(p) \\
&= \left(1+\sum_{l=1}^{j}\lambda_{\mathrm{sym}^{2l}f}(p)\right)\left(1+\rchi_ 4(p)\right)\rchi(p) \\
&= \rchi(p)+\rchi_ 4\rchi(p)+\sum_{l=1}^{j}\lambda_{\mathrm{sym}^{2l}f}(p)\rchi(p)+\sum_{l=1}^{j}\lambda_{\mathrm{sym}^{2l}f}(p)\rchi_ 4\rchi(p) \\
&=: b_1^*(p). \quad \text{(say)}
\end{align*}

From the structure of $b_1^*(p)$, we define the coefficients $b_1^*(n)$ as
\[
\sum_{n=1}^{\infty}\frac{b_1^*(n)}{n^{s}}= L(s,\rchi)L(s,\rchi_ 4\rchi)\prod_{n=1}^{j}L(s,\mathrm{sym}^{2n}f\otimes \rchi)L(s,\mathrm{sym}^{2n}f\otimes\rchi_ 4\rchi),
\]
which is absolutely convergent in $\Re(s)>1$. We also note that,
\begin{align*}
\prod_{p}\left(1+\frac{b^*(p)}{p^{s}}+\cdots+\frac{b^*(p^{m})}{p^{ms}}+\cdots\right) &= L(s,\rchi)L(s,\rchi_ 4\rchi)\prod_{n=1}^{j}L(s,\mathrm{sym}^{2n}f\otimes \rchi)L(s,\mathrm{sym}^{2n}f\otimes\rchi_ 4\rchi) \\
&= G_{j}^*(s,\rchi), \quad \text{(say)}
\end{align*}
for $\Re(s)>1$. Observe that $b_1^*(n)\ll_{\epsilon}n^{\epsilon}$ for any small positive constant $\epsilon$. The rest will be exactly as in Lemma~\ref{lem:F1}.
\end{proof}

\begin{lemma}\label{lem:F2*}
Let \(f\) be a normalised primitive holomorphic cusp form of weight \(k\) for \(\operatorname{SL}(2,\mathbb{Z})\), 
 let \(\lambda_{\operatorname{sym}^{j}f}(n)\) denote the \(n\)-th normalised Fourier coefficient 
of the \(j\)-th symmetric power \(L\)-function attached to \(f\) and let $\rchi$ be a Dirichlet character modulo $q$.  

Define
\[
F_{j}^{(*2)}(s, \rchi)=\sum_{n=1}^{\infty}\frac{\lambda_{\operatorname{sym}^{j}f}^2(n)\,l_2(n)\rchi(n)}{n^{s}}, \qquad \Re(s)>2,
\]
where \(l_2(n)\) is given by \eqref{eq:l_2(n)}. Then \(F_{j}^{(*2)}(s,\rchi)\) admits a factorisation
\[
F_{j}^{(*2)}(s, \rchi)=G_{j}^{(*2)}(s, \rchi)\,H_{j}^{(*2)}(s, \rchi),
\]
in which
\[
G_{j}^{(*2)}(s,\rchi):=L(s,\rchi)L(s-1,\rchi)\prod_{n=1}^{j}L\!\bigl(s,\operatorname{sym}^{2n}f \otimes \rchi\bigr)\; 
        L\!\bigl(s-1,\operatorname{sym}^{2n}f\otimes\rchi\bigr)
\]
and \(H_{j}^{(*2)}(s, \rchi)\) is a Dirichlet series that converges absolutely and uniformly 
in the half-plane \(\Re(s)>\frac{3}{2}\).
\end{lemma}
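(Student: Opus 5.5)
The plan is to follow the template of Lemma~\ref{lem:F1} and Lemma~\ref{lem:F1*}: compare Euler factors prime by prime, then show the ratio is an Euler product whose local factors differ from $1$ only from $p^{-2s}$ onward, with coefficients of size $p^{m(1+\epsilon)}$.

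\textbf{Step 1: the Euler product.} Since $\lambda_{\mathrm{sym}^jf}^2(n)l_2(n)\rchi(n)$ is multiplicative and $\ll n^{1+\epsilon}$, the series $F_j^{(*2)}(s,\rchi)$ converges absolutely for $\Re(s)>2$. It therefore factors as $\prod_p(1+A_p)$, where
\[
A_p=\sum_{m\ge1}\lambda_{\mathrm{sym}^jf}^2(p^m)l_2(p^m)\rchi(p^m)p^{-ms}.
\]

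\textbf{Step 2: the coefficient at primes.} Using $l_2(p)=1+p$ together with \eqref{eq:hecke} and \eqref{lambdasquare}, one gets
\[
\lambda_{\mathrm{sym}^jf}^2(p)l_2(p)\rchi(p)=\Bigl(1+\sum_{\ell=1}^{j}\lambda_{\mathrm{sym}^{2\ell}f}(p)\Bigr)(1+p)\rchi(p)=:b_2^*(p).
\]
This is precisely the $p^{-s}$ coefficient of
\[
L(s,\rchi)L(s-1,\rchi)\prod_{\ell=1}^{j}L(s,\mathrm{sym}^{2\ell}f\otimes\rchi)L(s-1,\mathrm{sym}^{2\ell}f\otimes\rchi),
\]
because the shift $s\mapsto s-1$ multiplies the coefficient at $p$ by $p$. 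Call the coefficients of this product $b_2^*(n)$. Writing each factor as a Dirichlet convolution and using $|\lambda_{\mathrm{sym}^{2\ell}f}(n)|\ll n^\epsilon$, we get $b_2^*(n)\ll n^{1+\epsilon}$. Hence $G_j^{(*2)}$ converges absolutely for $\Re(s)>2$ and equals $\prod_p(1+B_p)$, with $|B_p|<1$ for $\Re(s)>2+2\epsilon$.

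\textbf{Step 3: the local ratio.} Put $H_j^{(*2)}=F_j^{(*2)}/G_j^{(*2)}=\prod_p(1+A_p)/(1+B_p)$. Expanding $(1+A_p)(1-B_p+B_p^2-\cdots)$, the $p^{-s}$ terms cancel by Step 2, so each local factor has the form $1+\sum_{m\ge2}c(p^m)p^{-ms}$. Both $A_p$ and $B_p$ have $m$-th coefficients $\ll p^{m(1+\epsilon)}$. The point needing care is that the products $A_pB_p,B_p^2,\dots$ in the expansion also have coefficients at $p^{-ms}$ of size $\ll p^{m(1+\epsilon)}$; this holds because the number of compositions of $m$ into smaller parts depends only on $m$. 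Granting this, $|c(p^m)|\ll p^{m(1+\epsilon)}$, and therefore
\[
\sum_p\sum_{m\ge2}|c(p^m)|p^{-m\sigma}\ll\sum_p p^{-2(\sigma-1-\epsilon)},
\]
which converges when $\sigma>\tfrac32+\epsilon$.

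\textbf{Step 4: conclusion.} Consequently the product defining $H_j^{(*2)}(s,\rchi)$ converges absolutely and uniformly on $\Re(s)\ge\tfrac32+\epsilon'$. So it extends to an absolutely convergent Dirichlet series there, which is $\ll_\epsilon 1$, and the factorisation holds, first on $\Re(s)>2$ and then by analytic continuation. The main obstacle is the uniform control of $c(p^m)$ just described. It is routine because $\tfrac32$ is half a unit below the abscissa $2$, exactly as $\tfrac12$ is below $1$ in Lemma~\ref{lem:F1}. Equivalently, one can reduce to that case by substituting $n^{-1}$-normalised coefficients, i.e. writing $l_2(n)=n\sum_{d\mid n}d^{-1}$.
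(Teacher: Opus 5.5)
Your route is the one the paper intends: it gives no separate proof of this lemma and defers to the template of Lemma~\ref{lem:F1}. Steps 1, 2 and 4 are fine, including the identification of $\lambda^2_{\mathrm{sym}^jf}(p)(1+p)\chi(p)$ with the $p^{-s}$ coefficient of $G_j^{(*2)}$.

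The gap is in how you justify the step you single out in Step 3. Let $C=C(j,\epsilon)$ be the implied constant in your bounds for the coefficients of $A_p$ and $B_p$. Counting compositions only shows that the coefficient of $p^{-ms}$ in $B_p^k$ is at most $\binom{m-1}{k-1}C^kp^{m(1+\epsilon)}$. Summing over $k$ then gives $|c(p^m)|\ll(m+1)(1+C)^mp^{m(1+\epsilon)}$. The number of compositions depends only on $m$, but it is $2^{m-1}$, so the constant grows exponentially in $m$. It can be absorbed into $p^{m\epsilon}$ only for $p>P_0(j,\epsilon)$. For the finitely many small primes, the majorant $\sum_{m\ge2}(1+C)^mp^{m(1+\epsilon-\sigma)}$ diverges when $\sigma$ is near $\tfrac32$. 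So your argument does not show that those local factors converge absolutely, and the uniform estimate $\sum_{m\ge2}|c(p^m)|p^{-m\sigma}\ll p^{-2(\sigma-1-\epsilon)}$ that you sum over $p$ is unsupported. Following the template does not rescue this, because the paper's assertion $c_p(n)\ll n^{\epsilon}$ in Lemma~\ref{lem:F1} is equally unjustified.

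The missing idea is to invert $1+B_p$ exactly rather than through the geometric series.

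\textbf{Local factor of $G_j^{(*2)}$.} It is $\prod_{i=1}^{N}(1-\gamma_ip^{-s})^{-1}$ with $N=2(j+1)^2$, where $\gamma_i$ runs over $\chi(p)$, $p\chi(p)$, $\chi(p)\alpha(p)^{2\ell-2r}$ and $p\chi(p)\alpha(p)^{2\ell-2r}$ for $1\le\ell\le j$, $0\le r\le 2\ell$. Hence $|\gamma_i|\le p$.

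\textbf{Exact inverse.} Therefore $(1+B_p)^{-1}=\sum_{k=0}^{N}e_kp^{-ks}$ is a polynomial with $|e_k|\le\binom{N}{k}p^k$.

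\textbf{Coefficient bound.} This gives $c(p^m)=\sum_{k\le\min(m,N)}e_ka_{m-k}$. Since $|a_i|\le 2\binom{i+j}{j}^2p^i$, you get $|c(p^m)|\le 2^{N+1}\binom{m+j}{j}^2p^m$ uniformly in $p$. Also $c(p)=a_1+e_1=0$ by your Step 2.

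Hence $\sum_{m\ge2}|c(p^m)|p^{-m\sigma}\ll_j p^{2(1-\sigma)}$ at every prime once $\sigma\ge\tfrac32+\epsilon$. With this replacement, your Steps 3 and 4 go through, as does your alternative reduction via $l_2(n)=n\sum_{d\mid n}d^{-1}$.
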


\begin{lemma}\label{lem:F3*'}
Let \(f\) be a normalised primitive holomorphic cusp form of weight \(k\) for \(\operatorname{SL}(2,\mathbb{Z})\), 
 let \(\lambda_{\operatorname{sym}^{j}f}(n)\) denote the \(n\)-th normalised Fourier coefficient 
of the \(j\)-th symmetric power \(L\)-function attached to \(f\) and let $\rchi$ be a Dirichlet character modulo $q$.  

Define
\[
F_{j_1}^{(*3)}(s, \rchi)=\sum_{n=1}^{\infty}\frac{\lambda_{\operatorname{sym}^{j}f}^2(n)\,l_3(n)\rchi(n)}{n^{s}}, \qquad \Re(s)>3,
\]
where \(l_3(n)\) is given by \eqref{eq:l_3(n)&v_3(n)}. Then \(F_{j_1}^{(*3)}(s,\rchi)\) admits a factorisation
\[
F_{j_1}^{(*3)}(s, \rchi)=G_{j_1}^{(*3)}(s, \rchi)\,H_{j_1}^{(*3)}(s, \rchi),
\]
in which
\[
G_{j_1}^{(*3)}(s,\rchi):=L(s,\rchi_ 4\rchi)L(s-2,\rchi)\prod_{n=1}^{j}L\!\bigl(s,\operatorname{sym}^{2n}f \otimes \rchi_ 4\rchi\bigr)\; 
        L\!\bigl(s-2,\operatorname{sym}^{2n}f\otimes\rchi\bigr)
\]
and \(H_{j_1}^{(*3)}(s, \rchi)\) is a Dirichlet series that converges absolutely and uniformly 
in the half-plane \(\Re(s)>\frac{5}{2}\).
\end{lemma}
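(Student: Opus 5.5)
The plan is to follow the template of Lemma~\ref{lem:F1} and Lemma~\ref{lem:F1*}: compare Euler factors prime by prime and show that the two sides agree at the first level. The first step is to check that $\lambda_{\mathrm{sym}^jf}^2(n)\,l_3(n)\rchi(n)$ is multiplicative. It is, because $l_3 = \rchi_4 * \mathrm{id}^2$ is a Dirichlet convolution of multiplicative functions. Moreover it is $\ll n^{2+\epsilon}$ by \eqref{eq:lambdaSymbound}. Hence $F_{j_1}^{(*3)}(s,\rchi)$ converges absolutely for $\Re(s)>3$ and has an Euler product there.

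Next compute the coefficient at a prime $p$. From \eqref{eq:l_3(n)&v_3(n)} we get $l_3(p)=\rchi_4(p)+p^2$. By \eqref{eq:hecke} and \eqref{lambdasquare}, $\lambda_{\mathrm{sym}^jf}^2(p)=1+\sum_{\ell=1}^{j}\lambda_{\mathrm{sym}^{2\ell}f}(p)$. Therefore
\[
\lambda_{\mathrm{sym}^jf}^2(p)\,l_3(p)\rchi(p)=\Bigl(1+\sum_{\ell=1}^{j}\lambda_{\mathrm{sym}^{2\ell}f}(p)\Bigr)\rchi_4\rchi(p)+\Bigl(1+\sum_{\ell=1}^{j}\lambda_{\mathrm{sym}^{2\ell}f}(p)\Bigr)\rchi(p)\,p^{2}.
\]
This is exactly the $p^{-s}$ coefficient $b_3^*(p)$ of $G_{j_1}^{(*3)}(s,\rchi)$. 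Indeed, the factor $L(s-2,\cdot)$ contributes its prime coefficient multiplied by $p^2$. Define $b_3^*(n)$ as the coefficients of $G_{j_1}^{(*3)}(s,\rchi)$. Each of its factors is an absolutely convergent Euler product for $\Re(s)>3$. Bounding each coefficient trivially gives $b_3^*(n)\ll n^{2+\epsilon}$.

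Then set $H_{j_1}^{(*3)}=F_{j_1}^{(*3)}/G_{j_1}^{(*3)}$ and form the local quotient $(1+A_p)/(1+B_p)$ exactly as in Lemma~\ref{lem:F1}, with $|B_p|<1$ for $\Re(s)>3+\epsilon$. The first-order terms cancel, so the local factor is $1+\sum_{m\ge2}c(p^m)p^{-ms}$ with $c(p^m)\ll p^{m(2+\epsilon)}$. Summing, $\sum_p\sum_{m\ge2}|c(p^m)|p^{-m\sigma}\ll\sum_p p^{-2(\sigma-2-\epsilon)}$, which converges when $\sigma>\tfrac52+\epsilon$. This gives absolute and uniform convergence of $H_{j_1}^{(*3)}$ in $\Re(s)>\tfrac52$.

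The main obstacle is the region of validity of the identity itself. The algebraic manipulation $(1+A)(1-B+\cdots)$ is only justified where $|B_p|<1$, that is, for $\Re(s)>3$. Convergence of the resulting product in $\Re(s)>\tfrac52$ then gives analytic continuation of $H$ there. It is also worth recording the sharper bound $c(p^m)\ll p^{2m}m^{C}$, which follows from $|\lambda_{\mathrm{sym}^jf}(p^m)|\le d_{j+1}(p^m)$. This avoids $\epsilon$-losses that would otherwise accumulate over infinitely many primes.
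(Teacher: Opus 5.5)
Your proposal is correct and follows essentially the same route as the paper, which leaves this lemma to the template of Lemmas~\ref{lem:F1} and~\ref{lem:F1*}. That template matches the $p^{-s}$ coefficients via $\lambda_{\mathrm{sym}^jf}^2(p)=1+\sum_{\ell=1}^{j}\lambda_{\mathrm{sym}^{2\ell}f}(p)$ and $l_3(p)=\rchi_4(p)+p^2$, then bounds the coefficients $c(p^m)$, $m\ge 2$, of $F/G$ by $p^{m(2+\epsilon)}$ to get absolute convergence for $\Re(s)>\frac52$. The one wrinkle, which the paper shares, is the claim $|B_p|<1$: it can fail at small primes, but it is not needed, because $1/(1+B_p)$ is the inverse Euler factor of $G$ and hence a polynomial in $p^{-s}$, so the local quotient is valid wherever the local factor of $F$ converges.
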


\begin{lemma}\label{lem:F3*''}
Let \(f\) be a normalised primitive holomorphic cusp form of weight \(k\) for \(\operatorname{SL}(2,\mathbb{Z})\), 
 let \(\lambda_{\operatorname{sym}^{j}f}(n)\) denote the \(n\)-th normalised Fourier coefficient 
of the \(j\)-th symmetric power \(L\)-function attached to \(f\) and let $\rchi$ be a Dirichlet character modulo $q$.  

Define
\[
F_{j_2}^{(*3)}(s, \rchi)=\sum_{n=1}^{\infty}\frac{\lambda_{\operatorname{sym}^{j}f}^2(n)\,v_3(n)\rchi(n)}{n^{s}}, \qquad \Re(s)>3,
\]
where \(v_3(n)\) is given by \eqref{eq:l_3(n)&v_3(n)}. Then \(F_{j_2}^{(*3)}(s,\rchi)\) admits a factorisation
\[
F_{j_2}^{(*3)}(s, \rchi)=G_{j_2}^{(*3)}(s, \rchi)\,H_{j_2}^{(*3)}(s, \rchi),
\]
in which
\[
G_{j_2}^{(*3)}(s,\rchi):=L(s-2,\rchi_ 4\rchi)L(s,\rchi)\prod_{n=1}^{j}L\!\bigl(s-2,\operatorname{sym}^{2n}f \otimes \rchi_ 4\rchi\bigr)\; 
        L\!\bigl(s,\operatorname{sym}^{2n}f\otimes\rchi\bigr)
\]
and \(H_{j_2}^{(*3)}(s, \rchi)\) is a Dirichlet series that converges absolutely and uniformly 
in the half-plane \(\Re(s)>\frac{5}{2}\).
\end{lemma}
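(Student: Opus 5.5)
The proof of Lemma~\ref{lem:F3*''} repeats the argument of Lemma~\ref{lem:F1}, as adapted in Lemma~\ref{lem:F1*}; only the computation at primes changes. First, $v_3$ is multiplicative: it is the Dirichlet convolution of $d\mapsto d^2\rchi_4(d)$ with the constant function $1$. Hence $\lambda_{\mathrm{sym}^jf}^2(n)v_3(n)\rchi(n)$ is multiplicative. By \eqref{eq:lambdaSymbound} it is $\ll n^{2+\epsilon}$, so $F_{j_2}^{(*3)}(s,\rchi)$ converges absolutely for $\Re(s)>3$ and has an Euler product there.

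Next I compute the prime coefficients. We have $v_3(p)=1+p^2\rchi_4(p)$. By \eqref{eq:hecke} and \eqref{lambdasquare},
\[
\lambda_{\mathrm{sym}^jf}^2(p)v_3(p)\rchi(p)=\Bigl(1+\sum_{\ell=1}^{j}\lambda_{\mathrm{sym}^{2\ell}f}(p)\Bigr)\bigl(\rchi(p)+p^2\rchi_4\rchi(p)\bigr).
\]
Expanding the product gives four pieces, $\rchi(p)$, $p^2\rchi_4\rchi(p)$, $\sum_\ell\lambda_{\mathrm{sym}^{2\ell}f}(p)\rchi(p)$ and $p^2\sum_\ell\lambda_{\mathrm{sym}^{2\ell}f}(p)\rchi_4\rchi(p)$. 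These are exactly the $p^{-s}$ coefficients of $L(s,\rchi)$, $L(s-2,\rchi_4\rchi)$, $\prod_\ell L(s,\mathrm{sym}^{2\ell}f\otimes\rchi)$ and $\prod_\ell L(s-2,\mathrm{sym}^{2\ell}f\otimes\rchi_4\rchi)$ respectively. So if we define $b^*(n)$ as the coefficients of $G_{j_2}^{(*3)}(s,\rchi)$, then $b^*(p)$ matches the coefficient of $F_{j_2}^{(*3)}$ at every prime. Moreover $b^*(n)\ll n^{2+\epsilon}$, since it is a convolution of coefficients bounded by $n^{\epsilon}$ or $n^{2+\epsilon}$.

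Then I form $H_{j_2}^{(*3)}=F_{j_2}^{(*3)}/G_{j_2}^{(*3)}$ prime by prime as $\prod_p(1+A)/(1+B)$, exactly as in Lemma~\ref{lem:F1}. Here $|B|<1$ once $\Re(s)>3+\epsilon$. Because the first-order terms cancel, the local factor is $1+\sum_{m\ge2}c(p^m)p^{-ms}$ with $c(p^m)\ll p^{m(2+\epsilon)}$. Consequently $\sum_p\sum_{m\ge2}|c(p^m)|p^{-m\sigma}\ll\sum_p p^{-2(\sigma-2-\epsilon)}$. This converges for $\sigma>\tfrac52+\epsilon$, which gives absolute and uniform convergence of $H_{j_2}^{(*3)}$ in $\Re(s)>\tfrac52$, and $H_{j_2}^{(*3)}\ll 1$ there.

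The main obstacle is the bound $c(p^m)\ll p^{m(2+\epsilon)}$ uniformly in $m$. The series expansion of $(1+A)/(1+B)$ was only justified for $\sigma>3$, so I would not rely on that expansion to control $c(p^m)$. Instead I would define $c$ directly by the Dirichlet convolution $c=a*(b^*)^{-1}$, where $a$ is the coefficient of $F_{j_2}^{(*3)}$. The inverse coefficients $(b^*)^{-1}(p^m)$ come from inverting the finitely many local Euler factors of degree $2j+2$. After normalising by $p^{2m}$, these factors have roots of modulus $\le 1$, so $(b^*)^{-1}(p^m)\ll_j p^{2m}m^{O(j)}$. This gives the required bound, and hence the meromorphic identity, on $\Re(s)>\tfrac52$.
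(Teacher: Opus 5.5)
Your proposal is correct and follows the same route as the paper. The paper gives no separate proof of this lemma and defers to Lemma~\ref{lem:F1} via Lemma~\ref{lem:F1*}: match the prime coefficients using \eqref{lambdasquare} and $v_3(p)=1+p^2\rchi_4(p)$, then show that the quotient's local factors start at $p^{-2s}$ with coefficients $\ll p^{m(2+\epsilon)}$. Your definition $c=a*(b^*)^{-1}$ uses the fact that $1/G_p$ is a polynomial in $p^{-s}$ of degree at most $2(j+1)^2$ (not $2j+2$), with coefficients $\ll_j p^{2m}$ that vanish beyond that degree. This justifies the uniform bound on $c(p^m)$ more cleanly than the paper's geometric-series expansion, which needs $|B|<1$ and simply asserts $c_p(n)\ll n^{\epsilon}$.
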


\begin{lemma} \label{lem:F4*}
Let \(f\) be a normalised primitive holomorphic cusp form of weight \(k\) for \(\operatorname{SL}(2,\mathbb{Z})\), 
and let \(\lambda_{\operatorname{sym}^{j}f}(n)\) denote the \(n\)-th normalised Fourier coefficient 
of the \(j\)-th symmetric power \(L\)-function attached to \(f\) and let $\rchi$ be a Dirichlet character modulo $q$.  

Define
\[
F_{j}^{(*4)}(s, \rchi)=\sum_{n=1}^{\infty}\frac{\lambda_{\operatorname{sym}^{j}f}^2(n)\,l_4(n)\rchi(n)}{n^{s}}, \qquad \Re(s)>4,
\]
where \(l_4(n)\) is given by \eqref{eq:l_4(n)}. Then \(F_{j}^{(*4)}(s,\rchi)\) admits a factorisation
\[
F_{j}^{(*4)}(s, \rchi)=G_{j}^{(*4)}(s, \rchi)\,H_{j}^{(*4)}(s, \rchi),
\]
in which
\[
G_{j}^{(*4)}(s,\rchi):=L(s,\rchi)L(s-3,\rchi)\prod_{n=1}^{j}L\!\bigl(s,\operatorname{sym}^{2n}f \otimes \rchi\bigr)\; 
        L\!\bigl(s-3,\operatorname{sym}^{2n}f\otimes\rchi\bigr)
\]
and \(H_{j}^{(*2)}(s, \rchi)\) is a Dirichlet series that converges absolutely and uniformly 
in the half-plane \(\Re(s)>\frac{7}{2}\).
\end{lemma}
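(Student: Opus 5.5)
The factorisation in Lemma~\ref{lem:F4*} can be proved by the same Euler-product comparison used for Lemma~\ref{lem:F1} and Lemma~\ref{lem:F1*}, with $l_1$ replaced by $l_4$.

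\emph{Step 1: multiplicativity.} First check that $\lambda_{\mathrm{sym}^jf}^2(n)\,l_4(n)\rchi(n)$ is multiplicative. For odd $n$ one has $(-1)^{n+d}=1$ for every $d\mid n$, so $l_4(n)=\sigma_3(n)$. At $p=2$ the sign only changes finitely many Euler factors (in fact only the single factor at $2$). So $l_4$ is multiplicative up to a modification at the prime $2$, and that modification can be absorbed into $H_j^{(*4)}$. Together with the bound $|\lambda_{\mathrm{sym}^jf}(n)|\ll n^\epsilon$ and $l_4(n)\ll n^{3+\epsilon}$, this gives an absolutely convergent Euler product for $F_j^{(*4)}(s,\rchi)$ in $\Re(s)>4$.

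\emph{Step 2: matching at primes.} For an odd prime $p$, $l_4(p)=1+p^3$. Combining \eqref{eq:hecke} with \eqref{lambdasquare} gives
\[
\lambda_{\mathrm{sym}^jf}^2(p)\,l_4(p)\rchi(p)=\Bigl(1+\sum_{\ell=1}^{j}\lambda_{\mathrm{sym}^{2\ell}f}(p)\Bigr)\bigl(\rchi(p)+p^3\rchi(p)\bigr).
\]
The right-hand side is exactly the $p$-th Dirichlet coefficient of
\[
L(s,\rchi)L(s-3,\rchi)\prod_{n=1}^{j}L(s,\mathrm{sym}^{2n}f\otimes\rchi)\,L(s-3,\mathrm{sym}^{2n}f\otimes\rchi)=G_j^{(*4)}(s,\rchi).
\]
Write $b_4^*(n)$ for the coefficients of $G_j^{(*4)}(s,\rchi)$. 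Its Dirichlet convolution structure gives $b_4^*(n)\ll n^{3+\epsilon}$.

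\emph{Step 3: convergence of the quotient.} Define
\[
H_j^{(*4)}(s,\rchi)=\frac{F_j^{(*4)}(s,\rchi)}{G_j^{(*4)}(s,\rchi)}=\prod_p\frac{1+A_p}{1+B_p},
\]
where $A_p$ and $B_p$ are the local series of $F_j^{(*4)}$ and $G_j^{(*4)}$. Expand each quotient exactly as in Lemma~\ref{lem:F1} to get $\sum_{n}c(n)n^{-s}$. Because the $p$-coefficients agree, $c(p)=0$. The $p^m$ coefficients with $m\ge2$ satisfy $c(p^m)\ll p^{3m+m\epsilon}$. Hence
\[
\sum_{m\ge2}\bigl|c(p^m)p^{-ms}\bigr|\ll p^{-2(\sigma-3-\epsilon)}.
\]
The sum of this over $p$ converges when $2(\sigma-3-\epsilon)>1$, that is, for $\sigma>\tfrac72+\epsilon$. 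This gives absolute and uniform convergence of $H_j^{(*4)}$ in $\Re(s)>\tfrac72$, and the factor at $p=2$ changes nothing.

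\emph{Main obstacle.} The delicate point is justifying the geometric expansion of $(1+B_p)^{-1}$ in the wider range $\sigma>7/2$. The bound $|B_p|<1$ holds only for $\sigma$ large, around $\sigma>4$, and for small $p$ it may fail nearer $7/2$. The plan is to establish the identity first for $\sigma>4+2\epsilon$. Then note that the resulting product $\prod_p(1+\sum_{m\ge2}c(p^m)p^{-ms})$ converges absolutely in $\sigma>7/2$, and so defines the Dirichlet series for $H_j^{(*4)}$ there. Any finitely many small primes, including $p=2$, are handled by treating their local factors directly as finite Dirichlet polynomials or power series in $p^{-s}$.
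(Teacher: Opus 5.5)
Your proposal is correct and takes the same route as the paper: compare Euler products, match the coefficients at primes via \eqref{lambdasquare}, and bound the $p^m$ coefficients for $m\ge2$. The paper gives the details only for Lemma~\ref{lem:F1} and says the remaining factorisation lemmas, including this one, follow in the same way. Your handling of $p=2$, where $l_4(2)=7\neq 1+2^3$, and your treatment of small primes through the finite polynomial $(1+B_p)^{-1}$ rather than a geometric series fill in points the paper's template passes over. Indeed $(1+B_p)^{-1}$ is a polynomial in $p^{-s}$ for every $p$, so the ``main obstacle'' you flag does not actually arise.
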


\begin{lemma} \label{lem:F5*'}
Let $f$ be a normalised primitive holomorphic cusp form of weight $k$ for $SL(2,\mathbb{Z})$, $\rchi$ be a Dirichlet character modulo $q$ and let $\lambda_{\mathrm{sym}^{j}f}(n)$ be the $n$-th normalised Fourier coefficient of the $j^{th}$ symmetric power $L$-function associated to $f$. Define
\[
F_{j_1}^{(*5)}(s, \rchi)=\sum_{n=1}^{\infty}\frac{\lambda_{\mathrm{sym}^{j}f}^{2}(n)l_5(n) \rchi(n)}{n^{s}}, \quad \Re(s)>5,
\]
where $l_5(n)$ is given by \eqref{eq:l_5(n)&v_5(n)}. Then $F_{j_1}^{(*5)}(s,\rchi)$ admits a factorisation
\[
F_{j_1}^{(*5)}(s,\rchi)=G_{j_1}^{(*5)}(s,\rchi)H_{j_1}^{(*5)}(s,\rchi),
\]
in which
\[
G_{j_1}^{(*5)}(s,\rchi):=L(s-4, \rchi)L(s,\rchi_ 4)\prod_{n=1}^{j}L(s-4,\mathrm{sym}^{2n}f\otimes \rchi)L(s,\mathrm{sym}^{2n}f\otimes\rchi\rchi_ 4)
\]
and $H_{j_1}^{(*5)}(s)$ is a Dirichlet series that converges uniformly and absolutely in the half plane $\Re(s)>\frac{9}{2}$. 
\end{lemma}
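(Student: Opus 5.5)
The statement to prove is Lemma~\ref{lem:F5*'}, and I will follow the template of Lemma~\ref{lem:F1} and Lemma~\ref{lem:F1*}. The plan is to match the two Euler products at the primes and then show that the quotient is a harmless Dirichlet series.

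\textbf{Step 1: local factor at a prime.} The function $n\mapsto\lambda_{\mathrm{sym}^jf}^2(n)\,l_5(n)\rchi(n)$ is multiplicative, because $l_5$ is a Dirichlet convolution of multiplicative functions. So $F_{j_1}^{(*5)}(s,\rchi)$ has an Euler product for $\Re(s)>5$. Absolute convergence there follows from $\lambda_{\mathrm{sym}^jf}^2(n)\ll n^{\epsilon}$ and $l_5(n)\ll n^{4+\epsilon}$. At a prime $p$ we have $l_5(p)=\rchi_4(p)+p^4$. Combining \eqref{eq:hecke} with \eqref{lambdasquare} gives
\[
\lambda_{\mathrm{sym}^jf}^2(p)\,l_5(p)\rchi(p)=\Bigl(1+\sum_{\ell=1}^{j}\lambda_{\mathrm{sym}^{2\ell}f}(p)\Bigr)\bigl(p^4\rchi(p)+\rchi_4\rchi(p)\bigr)=:b_5^*(p).
\]
This is exactly the coefficient of $p^{-s}$ in
\[
L(s-4,\rchi)\,L(s,\rchi_4\rchi)\prod_{n=1}^{j}L(s-4,\mathrm{sym}^{2n}f\otimes\rchi)\,L(s,\mathrm{sym}^{2n}f\otimes\rchi\rchi_4).
\]
I read the factor $L(s,\rchi_4)$ in the statement as $L(s,\rchi_4\rchi)$, which is what the computation forces and what matches Lemma~\ref{lem:F3*'}. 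I define $b_5^*(n)$ as the coefficients of this product $G_{j_1}^{(*5)}(s,\rchi)$. From $|\lambda_{\mathrm{sym}^{2\ell}f}(n)|\le d_{2\ell+1}(n)$ we get $b_5^*(n)\ll n^{4+\epsilon}$, so the product converges absolutely for $\Re(s)>5$.

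\textbf{Step 2: the quotient.} Set $H=F/G$. On each prime this is $(1+A)/(1+B)$, with
\[
A=\sum_{m\ge1}\frac{\lambda_{\mathrm{sym}^jf}^2(p^m)\,l_5(p^m)\rchi(p^m)}{p^{ms}},\qquad B=\sum_{m\ge1}\frac{b_5^*(p^m)}{p^{ms}}.
\]
For $\Re(s)>5+\epsilon$ we have $|B|<1$, so the quotient expands as a power series in $p^{-s}$. Since the $p^{-s}$ coefficients cancel, the result is $1+\sum_{m\ge2}c(p^m)p^{-ms}$ with $c(p^m)\ll p^{m(4+\epsilon)}$. This gives $H(s)=\sum c(n)n^{-s}$ with $c$ multiplicative.

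\textbf{Step 3: convergence region.} The problem reduces to showing that $\sum_p\bigl(|c(p^2)|p^{-2\sigma}+\sum_{m\ge3}|c(p^m)|p^{-m\sigma}\bigr)$ converges. The first term is $\ll p^{8+2\epsilon-2\sigma}$, which is summable over primes when $\sigma>9/2+\epsilon$. The tail $m\ge3$ is bounded by a geometric series $\ll p^{3(4+\epsilon-\sigma)}$. That is summable when $\sigma>13/3$, and this holds inside $\sigma>9/2$ as soon as $p^{4+\epsilon-\sigma}<1$.

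\textbf{Main obstacle.} The delicate point is this last uniformity. The bound $c(p^m)\ll p^{m(4+\epsilon)}$ must hold uniformly in $m$, with the implied constant a fixed power of $m$ and independent of $p$, so that the geometric tail is valid throughout $\sigma>9/2$ and not only for $\sigma>5$. I would secure this by bounding the coefficients of $(1+A)(1+B)^{-1}$ directly through Deligne's bound. All local roots have modulus $1$ or $p^4$, so every power-series coefficient of $p^{-ms}$ is at most $p^{4m}$ times a polynomial in $m$. This yields absolute and uniform convergence of $H_{j_1}^{(*5)}(s,\rchi)$ in $\Re(s)>9/2$.
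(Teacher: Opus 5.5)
Your proposal is correct and follows the paper's route. For this lemma that route is just the template of Lemma~\ref{lem:F1}: match the $p^{-s}$ coefficients via \eqref{lambdasquare} and $l_5(p)=\rchi_4(p)+p^4$, form $(1+A)/(1+B)$ prime by prime, and bound $c(p^2)\ll p^{8+\epsilon}$ and the $m\ge 3$ tail. Your local-root bound for uniformity in $m$ is more careful than the paper's; equivalently, $(1+B)^{-1}=\prod_i(1-\alpha_i p^{-s})$ is a polynomial in $p^{-s}$ with $|\alpha_i|\le p^4$. It also makes the claim $|B|<1$ in your Step~2 unnecessary, which is just as well, since that claim can fail at small primes.

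Reading $L(s,\rchi_4)$ as $L(s,\rchi_4\rchi)$ is the natural correction, but it is not forced. The ratio $L(s,\rchi_4\rchi)/L(s,\rchi_4)$ is an absolutely convergent Dirichlet series for $\Re(s)>1$ and can be absorbed into $H_{j_1}^{(*5)}$, so the literal statement holds as well.
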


\begin{lemma} \label{lem:F5*''}
Let $f$ be a normalised primitive holomorphic cusp form of weight $k$ for $SL(2,\mathbb{Z})$, $\rchi$ be a Dirichlet character modulo $q$, and let $\lambda_{\mathrm{sym}^{j}f}(n)$ be the $n$-th normalised Fourier coefficient of the $j^{th}$ symmetric power $L$-function associated to $f$. Define
\[
F_{j_2}^{(*5)}(s,\rchi)=\sum_{n=1}^{\infty}\frac{\lambda_{\mathrm{sym}^{j}f}^{2}(n)v(n)\rchi(n)}{n^{s}}, \quad \Re(s)>5,
\]
where $v_5(n)$ is given by \eqref{eq:l_5(n)&v_5(n)}. Then $F_{j_2}^{(*5)}(s,\rchi)$ admits a factorisation
\[
F_{j_2}^{(*5)}(s,\rchi)=G_{j_2}^{(*5)}(s,\rchi)H_{j_2}^{(*5)}(s,\rchi),
\]
in which
\[
G_{j_2}^{(*5)}(s,\rchi):=L(s,\rchi)L(s-4,\rchi_ 4\rchi)\prod_{n=1}^{j}L(s,\mathrm{sym}^{2n}f\otimes \rchi)L(s-4,\mathrm{sym}^{2n}f\otimes\rchi_ 4\rchi)
\]
and $H_{j_2}^{(*5)}(s,\rchi)$ is a Dirichlet series that converges uniformly and absolutely in the half plane $\Re(s)>\frac{9}{2}$.
\end{lemma}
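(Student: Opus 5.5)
The proof will follow the template of Lemma~\ref{lem:F1} and Lemma~\ref{lem:F1*}: compare Euler factors at primes and absorb the discrepancy at higher prime powers into an absolutely convergent $H$. The function $n\mapsto \lambda_{\mathrm{sym}^jf}^2(n)v_5(n)\rchi(n)$ is multiplicative, because $v_5(n)=\sum_{d\mid n}\rchi_4(d)d^4$ is a convolution of multiplicative functions. Its size is $\ll n^{4+\epsilon}$ by \eqref{eq:lambdaSymbound} and $v_5(n)\le \sigma_4(n)\ll n^{4+\epsilon}$. Hence $F_{j_2}^{(*5)}(s,\rchi)$ converges absolutely for $\Re(s)>5$ and has an Euler product there. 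Here I read the character in $v_5$ as $\rchi_4$, consistent with the stated form of $G_{j_2}^{(*5)}$ and with $r_{10}$.

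\textbf{Step 1: match the Euler factors at primes.} For a prime $p$ we have $v_5(p)=1+\rchi_4(p)p^4$. Combining \eqref{eq:hecke} with \eqref{lambdasquare} gives
\[
\lambda_{\mathrm{sym}^jf}^2(p)v_5(p)\rchi(p)=\Bigl(1+\sum_{l=1}^{j}\lambda_{\mathrm{sym}^{2l}f}(p)\Bigr)\bigl(\rchi(p)+\rchi_4\rchi(p)p^4\bigr).
\]
This is exactly the coefficient at $p$ of the product
\[
L(s,\rchi)L(s-4,\rchi_4\rchi)\prod_{l=1}^{j}L(s,\mathrm{sym}^{2l}f\otimes\rchi)L(s-4,\mathrm{sym}^{2l}f\otimes\rchi_4\rchi),
\]
since a shift $s\mapsto s-4$ multiplies the $p$-th coefficient by $p^4$. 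Define $b^*(n)$ as the coefficients of this product $G_{j_2}^{(*5)}(s,\rchi)$. We have $b^*(n)\ll n^{4+\epsilon}$, so the product converges absolutely for $\Re(s)>5$.

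\textbf{Step 2: define and control $H$.} Set $H_{j_2}^{(*5)}=F_{j_2}^{(*5)}/G_{j_2}^{(*5)}=\prod_p (1+A_p)/(1+B_p)$, where $A_p$ and $B_p$ are the prime-power tails of the two Euler factors. This is legitimate where $|B_p|<1$, which holds for $\Re(s)>5+\epsilon$. Expanding the quotient, the $p^{-s}$ terms cancel by Step 1. What remains is a series $\sum_{m\ge2} c(p^m)p^{-ms}$ with $c(p^m)\ll p^{m(4+\epsilon)}$. This yields a multiplicative $c(n)$ with $H=\sum c(n)n^{-s}$.

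\textbf{Step 3: half-plane of convergence.} The bound $|c(p^2)p^{-2s}|\ll p^{8+2\epsilon-2\sigma}$ and the geometric tail bound $\sum_{m\ge3}|c(p^m)p^{-ms}|\ll p^{3(4+\epsilon-\sigma)}$ show that $\sum_p u_p$ converges once $2\sigma-8-2\epsilon>1$, that is, $\sigma>\tfrac92+\epsilon$. Hence $H_{j_2}^{(*5)}$ converges absolutely and uniformly for $\Re(s)>\tfrac92$ and is $\ll_\epsilon 1$ there.

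\textbf{Main obstacle.} The delicate point is that the identity $H=F/G$ is first established only where $|B_p|<1$, i.e.\ $\Re(s)>5$. It extends to $\Re(s)>\tfrac92$ because the product $\prod_p(1+\sum_{m\ge2}c(p^m)p^{-ms})$ itself converges absolutely there. So the extension comes through the multiplicative $c(n)$ rather than through the quotient, and this must be stated carefully, with uniform bounds on $c(p^m)$ such as $c(p^m)\ll_\epsilon m^{C}p^{m(4+\epsilon)}$, to justify the tail estimate.
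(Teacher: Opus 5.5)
Your proposal is correct and is essentially the paper's argument. The paper gives no separate proof of this lemma and instead refers to the Euler-product comparison of Lemma~\ref{lem:F1} and Lemma~\ref{lem:F1*}, which is exactly your Steps 1--3, with the shift $s\mapsto s-4$ raising the abscissa of $H$ from $\tfrac12$ to $\tfrac92$; your uniform bound on $c(p^m)$ also fills a point the paper glosses over. One small correction, which the paper shares: $|B_p|<1$ need not hold for small primes when $\Re(s)$ is near $5$, but you do not need it, because $1/(1+B_p)$ is the reciprocal of the local Euler factor of $G_{j_2}^{(*5)}$, hence a polynomial in $p^{-s}$, so the quotient expansion is valid throughout $\Re(s)>5$.
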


\begin{lemma} \label{lem:F6*}
Let $f$ be a normalised primitive holomorphic cusp form of weight $k$ for $SL(2,\mathbb{Z})$, $\rchi$ be a Dirichlet character modulo $q$ and let $\lambda_{\mathrm{sym}^{j}f}(n)$ be the $n$-th normalised Fourier coefficient of the $j^{th}$ symmetric power $L$-function associated to $f$. Define
\[
F_{j}^{(*6)}(s,\rchi)=\sum_{n=1}^{\infty}\frac{\lambda_{\mathrm{sym}^{j}f}^{2}(n)l_6(n)\rchi(n)}{n^{s}}, \quad \Re(s)>6,
\]
where $l_6(n)$ is given by \eqref{eq:l_6(n)}. Then $F_{j}^{(*6)}(s, \rchi)$ admits a factorisation
\[
F_{j}^{(*6)}(s, \rchi)=G_{j}^{(*6)}(s,\rchi)H_{j}^{(*6)}(s,\rchi),
\]
in which
\[
G_{j}^{(*6)}(s,\rchi):=L(s-5,\rchi)L(s,\rchi)\prod_{n=1}^{j}L(s-5,\mathrm{sym}^{2n}f \otimes \rchi)L(s,\mathrm{sym}^{2n}f \otimes \rchi)
\]
and $H_{j}^{(*6)}(s, \rchi)$ is a Dirichlet series that converges uniformly and absolutely in the half plane $\Re(s)>\frac{11}{2}$. 
\end{lemma}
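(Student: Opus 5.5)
The last statement is Lemma~\ref{lem:F6*}, and I would prove it the same way as Lemma~\ref{lem:F1} and Lemma~\ref{lem:F1*}: match the Euler factors at the primes, then show the quotient converges absolutely to the right of $\Re(s)=\frac{11}{2}$.

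\emph{Step 1: the prime coefficients.} The coefficient $\lambda_{\mathrm{sym}^jf}^2(n)l_6(n)\rchi(n)$ is multiplicative, where $l_6(n)=\sum_{d\mid n}(-1)^{n+d+n/d-1}d^5$. For an odd prime $p$ we have
\[
(-1)^{p+1+p-1}=1, \qquad (-1)^{p+p+1-1}=1,
\]
so $l_6(p)=1+p^5$. Combining this with \eqref{eq:hecke} and \eqref{lambdasquare} gives
\[
\lambda_{\mathrm{sym}^jf}^2(p)l_6(p)\rchi(p)=\Bigl(1+\sum_{\ell=1}^{j}\lambda_{\mathrm{sym}^{2\ell}f}(p)\Bigr)(1+p^5)\rchi(p).
\]
This is exactly the $p$-th Dirichlet coefficient $b_6^*(p)$ of $G_j^{(*6)}(s,\rchi)$. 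The shift $s-5$ supplies the factor $p^5$, and the unshifted factors supply the $1$.

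For $p=2$ the signs change, but this affects only finitely many Euler factors. Those factors, and any factors at $p\mid q$, can be absorbed into $H_j^{(*6)}$, since a finite product of such factors is holomorphic and nonvanishing for $\Re(s)$ large.

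\emph{Step 2: the quotient.} Both $F_j^{(*6)}$ and $G_j^{(*6)}$ have coefficients $\ll n^{5+\epsilon}$, since $\lambda^2\ll n^\epsilon$ and $l_6(n)\ll n^{5+\epsilon}$. Hence both converge absolutely for $\Re(s)>6$ and have Euler products there. I set $H=\prod_p(1+A_p)/(1+B_p)$ and expand as in Lemma~\ref{lem:F1}. By Step 1 the $p^{-s}$ terms cancel, so each local factor has the form
\[
1+\sum_{m\ge2}\frac{c(p^m)}{p^{ms}}, \qquad c(p^m)\ll p^{5m+m\epsilon}.
\]
Then
\[
\sum_p\sum_{m\ge2}\frac{|c(p^m)|}{p^{m\sigma}}\ll\sum_p p^{-2(\sigma-5-\epsilon)},
\]
which converges when $2(\sigma-5)>1$, that is, for $\sigma>\frac{11}{2}$. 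This yields absolute and uniform convergence, and $H_j^{(*6)}\ll1$, on $\Re(s)\ge\frac{11}{2}+\epsilon$.

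\emph{Expected obstacle.} The delicate point is bookkeeping rather than analysis. One has to handle the parity factor at $p=2$ correctly, and one has to justify the geometric expansion $1/(1+B_p)$ when $|B_p|$ need not be small. I would first work with $\sigma>6+2\epsilon$, where $|B_p|<1$ for large $p$, obtain the identity there, and then extend the convergence of the resulting Dirichlet series down to $\frac{11}{2}$ using only the coefficient bound $c(p^m)\ll p^{m(5+\epsilon)}$.
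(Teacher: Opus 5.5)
Your proposal is correct and follows the paper's route, namely the template of Lemma~\ref{lem:F1} and Lemma~\ref{lem:F1*}: you match the $p$-th coefficients via $l_6(p)=1+p^5$ and \eqref{lambdasquare}, then show that the local factors of the quotient begin at $p^{-2s}$ with coefficients $\ll p^{m(5+\epsilon)}$, which gives absolute convergence for $\sigma>\frac{11}{2}$. Two small remarks: in fact $l_6(2)=1+2^5=33$ as well, so $p=2$ is not exceptional at the prime level (the parity signs only matter at $2^a$ with $a\ge 2$, which $H_j^{(*6)}$ absorbs anyway); and your worry about $|B_p|$ disappears once you note that $1+B_p$ is the local Euler factor of $G_j^{(*6)}$, so $1/(1+B_p)$ is a polynomial in $p^{-s}$ and each local quotient converges wherever $1+A_p$ does.
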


\begin{lemma}\cite{Venkat} \label{lem:PrincipalChar}
    Let $\rchi_ 0$ be the principal character modulo $q$. Then we have $$L(s,\rchi_ 0) = \zeta(s)\prod_{p|q}\left( 1- \frac{1}{p^s} \right)$$ and $$L(s, \mathrm{sym}^{2n}f\otimes \rchi_ 0) = L(s, \mathrm{sym}^{2n}f)\prod_{p|q}\prod_{0\leq j \leq 2n}\left( 1- \frac{\alpha_f^{2n-2j}(p)}{p^s} \right)$$ for all integers $n \geq 1$ and 
    $$\prod_{p|q}\left( 1- \frac{1}{p^s} \right) \ll q^{\epsilon} ,$$
    $$\prod_{\substack{p|q }}\prod_{0\leq j \leq 2n}\left( 1- \frac{\alpha_f^{2n-2j}(p)}{p^s} \right) \ll q^{\epsilon},$$
    for $\frac{1}{2}+\epsilon < \Re(s) < 1+\epsilon$.
\end{lemma}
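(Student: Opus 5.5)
The statement is Lemma~\ref{lem:PrincipalChar}, which is cited from \cite{Venkat}. The plan is to prove it directly from Euler products. For $\Re(s)>1$ the principal character $\rchi_0$ modulo $q$ satisfies $\rchi_0(p)=1$ for $p\nmid q$ and $\rchi_0(p)=0$ for $p\mid q$. So
\[
L(s,\rchi_0)=\prod_{p\nmid q}\Bigl(1-\frac{1}{p^s}\Bigr)^{-1}=\zeta(s)\prod_{p\mid q}\Bigl(1-\frac{1}{p^s}\Bigr).
\]
Both sides are meromorphic, so the identity extends to the whole plane by analytic continuation. In the same way, the local factor of $\mathrm{sym}^{2n}f$ at $p$ is $\prod_{0\le j\le 2n}(1-\alpha^{2n-j}(p)\beta^{j}(p)p^{-s})^{-1}$. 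By \eqref{eq:deligne1} this equals $\prod_{0\le j\le 2n}(1-\alpha^{2n-2j}(p)p^{-s})^{-1}$. Twisting by $\rchi_0$ deletes exactly the factors with $p\mid q$, and multiplying back gives the stated identity for $L(s,\mathrm{sym}^{2n}f\otimes\rchi_0)$. This again holds for $\Re(s)>1$ and then everywhere by continuation.

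For the bounds, let $\sigma=\Re(s)>\tfrac12+\epsilon$. Since $|\alpha(p)|=1$ by \eqref{eq:deligne2}, each factor satisfies
\[
\Bigl|1-\frac{\alpha^{2n-2j}(p)}{p^s}\Bigr|\le 1+p^{-\sigma}\le 1+p^{-1/2}\le 2 .
\]
Hence the first product is at most $\prod_{p\mid q}2=2^{\omega(q)}$, and the second is at most $2^{(2n+1)\omega(q)}$. The one real step is the classical estimate $\omega(q)\ll \log q/\log\log q$. It gives $2^{c\,\omega(q)}\ll_{c,\epsilon} q^{\epsilon}$ for any fixed $c$; here $c=2n+1$ is fixed, since $n\le j$ and $j$ is fixed. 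This yields both $\ll q^{\epsilon}$ bounds, uniformly in the strip $\tfrac12+\epsilon<\Re(s)<1+\epsilon$. In fact it holds for all $\Re(s)>0$, which is more than needed. Alternatively, $2^{\omega(q)}\le d(q)\ll q^{\epsilon}$ gives the same result with $d(q)^{2n+1}\ll q^{\epsilon}$.

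Nothing here is a serious obstacle. The only points needing care are two. First, the identities are proved in the region of absolute convergence and extended by analytic continuation. Second, the implied constant in $\ll q^\epsilon$ depends on $n$ (hence on $j$) and $\epsilon$ but not on $q$ or $s$. That uniformity is what the later contour-shifting arguments need when summing over characters modulo $q$.
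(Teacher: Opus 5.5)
Your proof is correct: the Euler-product identities (using $\beta(p)=\alpha(p)^{-1}$, and the fact that the principal character vanishes exactly at primes $p\mid q$) together with the trivial bound $\prod_{p\mid q}(1+p^{-\sigma})^{2n+1}\le 2^{(2n+1)\omega(q)}\le d(q)^{2n+1}\ll_{n,\epsilon} q^{\epsilon}$ give everything claimed, uniformly in $s$. The paper gives no proof of its own and simply imports the lemma from \cite{Venkat}; your argument is the standard justification for it.
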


\begin{lemma} \cite{Venkat} \label{lem:PrimitiveChar}
    Let $\rchi$ be a non-primitive character modulo q and $\rchi^*$ be a primitive character modulo $q_1 (\neq q)$ induced by $\rchi$. Then we have $$L(s,\rchi) = L(s,\rchi^*)\prod_{\substack{p|q \\ p \nmid q_1}}\left( 1- \frac{\rchi^*(p)}{p^s} \right),$$ $$L(s, \mathrm{sym}^{2n}f\otimes \rchi) = L(s, \mathrm{sym}^{2n}f \otimes \rchi^*)\prod_{\substack{p|q \\ p \nmid q_1}}\prod_{0\leq j \leq 2n}\left( 1- \frac{\alpha_f^{2n-2j}(p)\rchi^*(p)}{p^s} \right),$$ for all integers $n \geq 1$ and 
    $$\prod_{\substack{p|q \\ p \nmid q_1}}\left( 1- \frac{\rchi^*(p)}{p^s} \right) \ll q^\epsilon,$$
    $$\prod_{\substack{p|q \\ p \nmid q_1}}\prod_{0\leq j \leq 2n}\left( 1- \frac{\alpha_f^{2n-2j}(p)\rchi^*(p)}{p^s} \right) \ll q^{\epsilon},$$
    for $\frac{1}{2}+\epsilon < \Re(s) < 1+\epsilon$.
\end{lemma}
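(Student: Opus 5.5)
This lemma is a bookkeeping statement about Euler factors at primes dividing $q$, and I would prove it directly from the Euler products. Write $\rchi$ (mod $q$), induced by the primitive $\rchi^*$ (mod $q_1$) with $q_1 \mid q$. For every prime $p$ we have $\rchi(p)=\rchi^*(p)$ when $p\nmid q$, and $\rchi(p)=0$ when $p\mid q$.

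\emph{Step 1: the factorisations.} Take $\Re(s)>1$. Then
\[
L(s,\rchi)=\prod_{p\nmid q}\left(1-\frac{\rchi^*(p)}{p^s}\right)^{-1}.
\]
The product for $L(s,\rchi^*)$ runs over $p\nmid q_1$, and so it contains the extra factors $\bigl(1-\rchi^*(p)p^{-s}\bigr)^{-1}$ for $p\mid q$, $p\nmid q_1$. Multiplying these back out gives the first identity.

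For the twisted symmetric power, recall that $L(s,\mathrm{sym}^{2n}f\otimes\rchi)$ has local factor
\[
\prod_{0\le j\le 2n}\left(1-\frac{\alpha_f^{2n-j}(p)\beta_f^{j}(p)\rchi(p)}{p^s}\right)^{-1}.
\]
Since $\alpha\beta=1$, we have $\alpha^{2n-j}\beta^j=\alpha^{2n-2j}$, so the same comparison gives the second identity. Both identities then extend to $\frac12+\epsilon<\Re(s)<1+\epsilon$ by analytic continuation, because the correction products are finite products of entire functions.

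\emph{Step 2: the bounds.} For $\Re(s)>\frac12$, each factor satisfies
\[
\left|1-\frac{\alpha^{2n-2j}(p)\rchi^*(p)}{p^s}\right|\le 1+p^{-1/2}\le 2,
\]
using $|\alpha(p)|=1$ from \eqref{eq:deligne2} and $|\rchi^*(p)|\le1$. The number of primes dividing $q$ is $\omega(q)\ll \log q/\log\log q$. Hence the first product is at most $2^{\omega(q)}\ll q^{\epsilon}$. The double product is at most $2^{(2n+1)\omega(q)}\ll_{n} q^{\epsilon}$, since $2^{C\omega(q)}\le d(q)^{C}\ll q^{\epsilon}$.

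The only mildly delicate point is that the bound must be uniform in $s$ on the strip. The estimate above uses only $\Re(s)>\frac12$, so this is automatic, and the implied constant depends on $n$ and $\epsilon$ alone. The same argument with $\rchi^*$ replaced by $1$ recovers Lemma~\ref{lem:PrincipalChar}.
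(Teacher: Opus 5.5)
Your proof is correct. Comparing Euler products at the finitely many primes $p\mid q$, $p\nmid q_1$, and using $\alpha^{2n-j}\beta^{j}=\alpha^{2n-2j}$ from $\alpha\beta=1$, gives both identities; the per-factor bound $1+p^{-1/2}\le 2$ then yields $2^{(2n+1)\omega(q)}\le d(q)^{2n+1}\ll q^{\epsilon}$ uniformly on the strip (in fact on all of $\Re(s)\ge 0$). The paper gives no argument of its own and simply cites \cite{Venkat}; what you wrote is the standard argument behind that citation.
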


\begin{lemma}\label{lem:LSigmaChi}
    Let $\rchi$ be any primitive character modulo $q$. Then for $q \ll T^2$, we have \begin{align}
        L(\sigma+iT,\rchi) \ll (q(1+|T|))^{\mathrm{max}\{\frac{1}{3}(1-\sigma),0\}+\epsilon}
    \end{align}
    holds uniformly for $\frac{1}{2} \leq \sigma \leq 2$ ; and \begin{align}
        \int_{1}^{T}|L(\sigma+iT,\rchi)|^4 \ll (qT)^{2(1-\sigma)+\epsilon}
    \end{align}
    uniformly for $\frac{1}{2}\leq \sigma \leq 1+\epsilon$ and $T \geq 1$.
\end{lemma}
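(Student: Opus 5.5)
The statement collects two standard estimates for Dirichlet $L$-functions. I would not prove them from scratch. Instead I would reduce each to a known endpoint estimate and then interpolate with Phragm\'en--Lindel\"of (pointwise bound) or Gabriel's convexity theorem (mean value). Throughout, $\rchi$ is primitive modulo $q$, so the functional equation
\[
L(s,\rchi)=\varepsilon(\rchi)\Bigl(\tfrac{q}{\pi}\Bigr)^{\frac12-s}\frac{\Gamma\bigl(\frac{1-s+\mathfrak a}{2}\bigr)}{\Gamma\bigl(\frac{s+\mathfrak a}{2}\bigr)}L(1-s,\overline{\rchi})
\]
with $|\varepsilon(\rchi)|=1$ is available. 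Here $\mathfrak a\in\{0,1\}$ is the parity of $\rchi$.

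\textbf{Pointwise bound.} On $\sigma=1+\epsilon$ the Dirichlet series gives $L(s,\rchi)\ll_\epsilon 1$. This also covers $1+\epsilon\le\sigma\le 2$, where the claimed exponent is $\epsilon$. The substantive input is the critical line. There I would invoke a hybrid Weyl-strength subconvexity bound,
\[
L(\tfrac12+it,\rchi)\ll (q(1+|t|))^{\frac16+\epsilon},
\]
in the range $q\ll (1+|t|)^2$. This is the hypothesis $q\ll T^2$ in the statement, where the $t$-aspect Weyl exponent is available uniformly in $q$ (Heath-Brown's hybrid bounds, or the Petrow--Young Weyl bound). With $\frac16+\epsilon$ on $\sigma=\frac12$ and $\epsilon$ on $\sigma=1+\epsilon$, I would apply Phragm\'en--Lindel\"of in the strip $\frac12\le\sigma\le1+\epsilon$. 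The finite-order growth it needs follows from the functional equation and Stirling. Linear interpolation of the exponent gives $\frac13(1-\sigma)+\epsilon$, after rescaling $\epsilon$. For $\sigma\ge1$ the maximum with $0$ takes over. The main obstacle is the endpoint $\frac16$ uniformly in $q$ and $t$. If only Heath-Brown's $\frac{3}{16}$ were available, the exponent would weaken to $\frac38(1-\sigma)$. So the plan is to quote the precise hybrid Weyl result and check that its range condition matches $q\ll T^2$.

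\textbf{Fourth moment.} At $\sigma=\frac12$ I would use the classical hybrid fourth moment
\[
\int_1^T|L(\tfrac12+it,\rchi)|^4\,dt\ll (qT)^{1+\epsilon}.
\]
This follows from the approximate functional equation for $L(s,\rchi)^2$, which has length about $qT$, combined with the Montgomery--Vaughan mean value theorem for Dirichlet polynomials and $d(n)\ll n^\epsilon$. At $\sigma=1+\epsilon$ the integrand is $O(1)$. Gabriel's (or Hardy--Ingham--P\'olya's) convexity theorem for $\int|F(\sigma+it)|^2dt$, applied to $F=L^2$ with a smooth cutoff, then makes the logarithm of the integral convex in $\sigma$. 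This yields the interpolated bound.

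I must flag a normalisation point. Trivially the integral at $\sigma=1+\epsilon$ is of size $T$, not $T^{\epsilon}$, so the stated right-hand side $(qT)^{2(1-\sigma)+\epsilon}$ is correct at $\sigma=\frac12$ but needs an extra factor $T$ as $\sigma\to1$, i.e.\ $T^{1+\epsilon}$ at $\sigma=1$. I expect it is meant as $\ll T\,(qT)^{\max\{1-\sigma,0\}+\epsilon}$ or in dyadic-averaged form. I would prove the version that the convexity argument actually delivers and check that this is the form used in the later sections.
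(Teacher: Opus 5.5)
The paper gives no argument for this lemma; it quotes the pointwise bound from Heath-Brown and the fourth-moment bound from Perelli. You instead start from critical-line inputs only:
\begin{itemize}
\item a hybrid Weyl bound on $\sigma=\frac12$;
\item the hybrid fourth moment $\int_1^T|L(\tfrac12+it,\chi)|^4\,dt\ll(qT)^{1+\epsilon}$, which you correctly derive from the approximate functional equation for $L^2$ and the mean value theorem.
\end{itemize}
You then recover the $\sigma$-dependence by Phragm\'en--Lindel\"of and Gabriel's convexity theorem. This is more work than a citation, but it makes explicit what is assumed: the exponent $\frac13(1-\sigma)$ rests on a $\frac16$ bound on $\sigma=\frac12$ that is uniform in $q$ and $t$. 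The range condition is not a real obstacle. If your endpoint bound is only available for $q\ll|t|^2$, multiply by $e^{(s-iT)^2}$ before running the three-lines argument, so that only heights $|t|\asymp T$ matter.

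Your route also exposes something the citations hide, and your objection to the second claim is right. Since $|L(1+\epsilon+it,\chi)|\ge\zeta(2+2\epsilon)/\zeta(1+\epsilon)\gg_\epsilon 1$, the integral is $\gg_\epsilon T$ at $\sigma=1+\epsilon$. The stated right-hand side there is $(qT)^{-\epsilon}$, so the statement is false near $\sigma=1$.

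Your guessed normalisation $T(qT)^{\max\{1-\sigma,0\}+\epsilon}$ is not what your argument produces. Interpolating $(qT)^{1+\epsilon}$ at $\sigma=\frac12$ against $O(T)$ at $\sigma=1+\epsilon$ (dyadically in $t$, with a smooth weight) gives
\[
\int_1^T|L(\sigma+it,\chi)|^4\,dt\ll T^{1+\epsilon}q^{2(1-\sigma)+\epsilon}.
\]
This coincides with the stated bound at $\sigma=\frac12$. Your guess would read $q^{1/2}T^{3/2}$ at $\sigma=\frac12$. For $T<q$ that is stronger than the endpoint input itself, so no convexity argument can deliver it.

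For the paper the misstatement is harmless. The fourth-moment bound is only applied at $\sigma=\frac12+\epsilon$, where the correct and stated bounds differ by a factor $T^{O(\epsilon)}$.
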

\begin{proof}
    The first result follows from \cite{Heath} and the second follows from \cite{Per}.
\end{proof}

\begin{lemma} \label{lem:zetabound}
For any $\frac{1}{2}\leq \sigma \leq 2$ and $T \geq 2$, we have
\begin{align} \label{eq:zeta12mom}
\int_{1}^{T}\left|\zeta\left(\sigma+it\right)\right|^{4}dt\ll T^{1+\epsilon},
\end{align}
and \begin{align} \label{eq:zetabound}
\zeta(\sigma+it)\ll_{\epsilon}(1+|t|)^{\max\{\frac{13}{42}(1-\sigma),0\}+\epsilon},
\end{align}
uniformly for $\frac{1}{2}\leq\sigma\leq 1+\epsilon$, and $|t|\geq 1$.
\end{lemma}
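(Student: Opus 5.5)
The statement just above is Lemma~\ref{lem:zetabound}. It is a pair of classical facts about the Riemann zeta function, so I would assemble it from the standard literature rather than derive it from the earlier lemmas of the paper. The plan has two parts: the fourth-moment estimate \eqref{eq:zeta12mom} and the pointwise bound \eqref{eq:zetabound}.

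\emph{The fourth moment.} On the critical line I would invoke the Ingham--Heath-Brown asymptotic
\[
\int_{1}^{T}|\zeta(\tfrac12+it)|^{4}\,dt \sim \frac{T}{2\pi^2}\log^4 T,
\]
which already gives $\ll T^{1+\epsilon}$ at $\sigma=\frac12$. For $\sigma\geq 1+\epsilon$ the integrand is bounded by $\zeta(1+\epsilon)^4$, so the integral is $\ll T$. Near $\sigma=1$ I would not rely on absolute convergence and instead use the convexity below. For the intermediate range $\frac12\leq\sigma\leq 1+\epsilon$, I would apply Gabriel's convexity theorem for mean values: $\log\int|\zeta|^4$, smoothed with a weight, is a convex function of $\sigma$. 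Interpolating between the endpoint bounds $T^{1+\epsilon}$ and $T$ gives $T^{1+\epsilon}$ throughout. Alternatively, one can note that the classical mean value theorem for Dirichlet polynomials, applied to an approximate functional equation of $\zeta^2$, gives $\int_1^T|\zeta(\sigma+it)|^4\,dt\ll T\log^4T$ directly for each $\sigma\geq\frac12$.

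\emph{The pointwise bound.} I would take Bourgain's subconvexity exponent $\zeta(\frac12+it)\ll|t|^{13/84+\epsilon}$ on the critical line. On $\Re s=1+\epsilon$ I would use the trivial bound $\zeta\ll 1$. Then I would apply the Phragm\'en--Lindel\"of principle to $\zeta(s)$ in the strip $\frac12\leq\sigma\leq1+\epsilon$; $\zeta$ has finite order there and $|t|\geq1$ keeps us away from the pole. Linear interpolation of the exponent between $\frac{13}{84}$ at $\sigma=\frac12$ and $0$ at $\sigma=1$ gives $\frac{13}{42}(1-\sigma)+\epsilon$. For $\sigma>1$ the bound $\zeta(\sigma+it)\ll_\epsilon 1\ll(1+|t|)^{\epsilon}$ handles the part of the range where the maximum is $0$.

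\emph{Main obstacle.} The only non-routine point is ensuring uniformity in $\sigma$ for the fourth moment near the endpoints. Citing the mean value form with $\log^4 T$ uniform in $\sigma\geq\frac12$ avoids this, and it is what I would do first. Everything else is a direct citation.
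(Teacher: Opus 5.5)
Your proposal is correct and follows essentially the same route as the paper, whose proof simply cites Titchmarsh (p.~148) for the fourth-moment bound and Bourgain's $13/84$ exponent for the pointwise bound. Your explicit Phragm\'en--Lindel\"of interpolation between $\sigma=\frac12$ and $\sigma=1$ (which yields the slope $\frac{13}{42}$), and your convexity or mean-value argument off the critical line, only spell out routine steps that the paper leaves implicit.
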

\begin{proof}
For the proof of \eqref{eq:zeta12mom}, see \cite{Titch}, page-$148$ and \eqref{eq:zetabound} is due to Bourgain, for instance, see \cite{BourJe}.
\end{proof}

\begin{lemma} \label{lem:LSymFChi}
    Let $f \in S_{\kappa}$, and $\rchi$ be a primitive character modulo $q$. Then for $q \ll T^2$, we have 
    \begin{align} \label{eq:LSigmafbound}
        L(\sigma+iT,\mathrm{sym}^2f) \ll (1+|T|)^{\mathrm{max} \{\frac{6}{5}(1-\sigma),0\} + \epsilon}
    \end{align}
    and \begin{align}\label{eq:LSigmaSymChibound}
        L(\sigma+iT,\mathrm{sym}^2f \otimes\rchi) \ll (q(1+|T|))^{\mathrm{max} \{\frac{67}{46}(1-\sigma),0\} + \epsilon}
    \end{align}
    uniformly for $\frac{1}{2} \leq \sigma \leq 2$ and $|T| \geq 1$;
    \begin{align}\label{eq:intLSigmafChi}
        \int_{1}^{T}|L(\sigma+iT,\mathrm{sym}^2f \otimes\rchi)|^4dt \ll (qT)^{6(1-\sigma)+\epsilon}
    \end{align}
    uniformly for $\frac{1}{2} \leq \sigma \leq 1+\epsilon$ and $T \geq 1$.
 \end{lemma}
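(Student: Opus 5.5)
The statement is a citation-type lemma: subconvexity and fourth-moment bounds for $L(s,\mathrm{sym}^2 f)$ and its twists. The plan is to assemble it from known results rather than derive anything new. The common framework is the following. $L(s,\mathrm{sym}^2 f\otimes\rchi)$ is entire of degree $3$ by Gelbart--Jacquet, and the twist by a primitive $\rchi$ modulo $q$ remains cuspidal on $\mathrm{GL}(3)$, with analytic conductor $\asymp (q(1+|T|))^3$. Its functional equation with Stirling's formula, together with Phragm\'en--Lindel\"of convexity in $\sigma$, reduces each pointwise bound to two inputs: a bound at $\sigma=\frac12$, and the trivial bound $\ll (q|T|)^{\epsilon}$ at $\sigma=1+\epsilon$.

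For \eqref{eq:LSigmafbound}, at $\sigma=\frac12$ I would invoke the $t$-aspect subconvexity exponent $\frac35$ for $\mathrm{sym}^2 f$. This is Lin--Nunes--Qi type (improving Li's $\mathrm{GL}(3)$ work), giving $L(\tfrac12+iT,\mathrm{sym}^2 f)\ll (1+|T|)^{3/5+\epsilon}$. Interpolating linearly between $\frac35$ at $\sigma=\frac12$ and $0$ at $\sigma=1$ yields the exponent $\frac65(1-\sigma)$. The range $1<\sigma\le 2$ is covered by absolute convergence.

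For \eqref{eq:LSigmaSymChibound}, I would do the same with a hybrid $(q,T)$ subconvexity bound at $\sigma=\frac12$ with exponent $\frac{67}{92}$, of the shape $(q(1+|T|))^{67/92+\epsilon}$. This is the one point where I would need to locate the right reference, e.g.\ Huang's or Sun's hybrid bounds for $\mathrm{GL}(3)\times\rchi$ twists. The hypothesis $q\ll T^2$ is exactly the range in which such hybrid results are stated. Interpolation then gives $\frac{67}{46}(1-\sigma)$. If the available literature only offers something weaker, I would in fact first check whether the convexity exponent $\frac34\cdot 2(1-\sigma)=\frac32(1-\sigma)$ suffices for the later applications, since $\frac{67}{46}<\frac32$ is only a slight saving.

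For \eqref{eq:intLSigmafChi}, I would proceed as follows.
\begin{enumerate}
\item Use the approximate functional equation at $\sigma=\frac12$, so that $L$ is expressed through Dirichlet polynomials of length $\approx (qT)^{3/2}$.
\item Square, so that $L^2$ becomes a polynomial of length $N\approx(qT)^3$.
\item Apply the Montgomery--Vaughan mean value theorem, giving $\int_1^T|L|^4\ll (T+N)N^{\epsilon}\sum |a_n|^2 n^{-1}\ll (qT)^{3+\epsilon}$, which is the claimed exponent $6(1-\sigma)$ at $\sigma=\frac12$.
\item At $\sigma=1+\epsilon$ the integral is $\ll T^{1+\epsilon}$, which is acceptable once $\epsilon$ is absorbed into the exponent.
\item Interpolate in $\sigma$ via Gabriel's convexity theorem for mean values, or via the Dirichlet polynomial bound directly at each $\sigma$.
\end{enumerate}

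The main obstacle is pinning down the precise hybrid subconvex exponent $\frac{67}{92}$ at the central line for the twisted symmetric square. Everything else is standard convexity bookkeeping.
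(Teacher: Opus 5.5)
The pointwise bounds \eqref{eq:LSigmafbound} and \eqref{eq:LSigmaSymChibound} match the paper exactly. You interpolate by Phragm\'en--Lindel\"of between a central-line subconvex bound and the trivial bound at $\sigma=1+\epsilon$. The central-line inputs are Lin--Nunes--Qi's $|t|^{3/5+\epsilon}$ and the hybrid exponent $\frac{67}{92}$, for which the paper cites Huang. For \eqref{eq:intLSigmafChi} the paper only cites Perelli's mean-square theorem, applied to $L(s,\mathrm{sym}^2f\otimes\chi)^2$ as a degree-$6$ $L$-function. Your steps 1--3 (approximate functional equation, a polynomial of length $N\asymp(qT)^3$, Montgomery--Vaughan) essentially re-derive that, and correctly give $(qT)^{3+\epsilon}$ at $\sigma=\frac12$.

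\textbf{The gap is in steps 4--5.} At $\sigma=1+\epsilon$ the claimed right-hand side is $(qT)^{6(1-\sigma)+\epsilon}=(qT)^{-5\epsilon}$. So your endpoint bound $T^{1+\epsilon}$ cannot be ``absorbed into the exponent''. Nor can that endpoint be improved: for $\Re s=1+\epsilon$ the Euler product gives $|L(s,\mathrm{sym}^2f\otimes\chi)|\ge\bigl(\zeta(2+2\epsilon)/\zeta(1+\epsilon)\bigr)^3\gg_\epsilon1$, hence $\int_1^T|L|^4\,dt\gg_\epsilon T$.

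So \eqref{eq:intLSigmafChi} as stated is false at the right end of its $\sigma$-range, and no argument can establish it there. For bounded $q$ it already fails once $6(1-\sigma)<1$, because the diagonal term in the mean value theorem contributes $\gg T$.

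Your interpolation would also not recover the exponent $6(1-\sigma)$ even where it is true. Gabriel's convexity between $(qT)^{3+\epsilon}$ at $\sigma=\frac12$ and $T^{1+\epsilon}$ at $\sigma=1+\epsilon$ gives roughly $(qT)^{6(1-\sigma)}T^{2\sigma-1}$, which loses a factor $T^{2\sigma-1}$.

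What your method genuinely proves comes from applying the approximate functional equation for $L^2$ (analytic conductor $\asymp(q(1+|t|))^6$) directly at each $\sigma$. Montgomery--Vaughan on both the direct and the dual sum then gives
\[
\int_1^T|L(\sigma+it,\mathrm{sym}^2f\otimes\chi)|^4\,dt\ll T^{1+\epsilon}+(qT)^{6(1-\sigma)+\epsilon}\qquad\bigl(\tfrac12\le\sigma\le1+\epsilon\bigr).
\]
This is the correct shape of Perelli's estimate, with exponent $\max\{6(1-\sigma),1\}$. It agrees with \eqref{eq:intLSigmafChi} precisely when $(qT)^{6(1-\sigma)}\gg T$. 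In particular it agrees at $\sigma=\frac12+\epsilon$, the only abscissa where the paper uses the fourth moment. That corrected statement is what you should prove.
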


 \begin{proof}
     \eqref{eq:LSigmafbound} and \eqref{eq:LSigmaSymChibound} follow from Phragm\'{en}-Lindel\"{o}f convexity principle and the work of Lin, Nunes
     and Qi \cite{Lin} and Huang \cite{Huang} respectively. \eqref{eq:intLSigmafChi} follows from Perelli \cite{Per}.
 \end{proof}

\begin{lemma} \label{lem:genLbound}
Let $\rchi$ be a primitive character modulo $q$ and $\mathfrak{L}_{m,n}^{d}(s,\rchi)$ be a general $L$-function of degree $2A$. For any $\epsilon>0$, we have
\begin{align} \label{eq:genL2mom}
\int_{T}^{2T}\left|\mathfrak{L}_{m,n}^{d}(\sigma+it,\rchi)\right|^{2}dt\ll (qT)^{2A(1-\sigma)+\epsilon},
\end{align}
uniformly for $\frac{1}{2}\leq\sigma\leq 1+\epsilon$, and $T\geq 1$. Also,
\begin{align} \label{eq:genLbound}
\mathfrak{L}_{m,n}^{d}(\sigma+it,\rchi)\ll (q(1+|t|))^{\max\{A(1-\sigma),0\}+\epsilon},
\end{align}
uniformly for $-\epsilon\leq\sigma\leq 1+\epsilon$.
\end{lemma}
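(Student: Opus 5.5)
The plan is to derive both estimates in Lemma~\ref{lem:genLbound} from the functional equation of $\mathfrak{L}_{m,n}^{d}(s,\rchi)$, combined with the Phragm\'en--Lindel\"of convexity principle for the pointwise bound and an approximate functional equation (or the Montgomery--Vaughan mean value theorem) for the second moment. Since $\rchi$ is primitive modulo $q$, the twisted $L$-function of degree $2A$ is entire with analytic conductor of size $\asymp q^{2A}(1+|t|)^{2A}$. It satisfies a functional equation $\mathfrak{L}(s,\rchi)=\varepsilon\, \gamma(s)\, \overline{\mathfrak{L}}(1-s,\overline{\rchi})$, and Stirling's formula gives $|\gamma(\sigma+it)| \asymp (q(1+|t|))^{2A(1/2-\sigma)}$. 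We assume the standard facts here: automorphy, or at least the functional equation with the expected gamma factors, and the Ramanujan-type bound on the Dirichlet coefficients, which holds for the symmetric-power products used in the paper by Deligne, cf.\ \eqref{eq:lambdaSymbound}.

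\textbf{Pointwise bound \eqref{eq:genLbound}.} On the line $\sigma=1+\epsilon$ absolute convergence gives $\mathfrak{L}\ll 1$. On $\sigma=-\epsilon$ the functional equation gives $\mathfrak{L}\ll (q(1+|t|))^{2A(1/2+\epsilon)}=(q(1+|t|))^{A+2A\epsilon}$. The function has finite order in the strip, so Phragm\'en--Lindel\"of interpolation linearly in $\sigma$ yields $(q(1+|t|))^{A(1-\sigma)+\epsilon}$ for $-\epsilon\le\sigma\le1+\epsilon$, after renaming $\epsilon$. The $\max\{\cdot,0\}$ takes care of $\sigma>1$.

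\textbf{Mean square \eqref{eq:genL2mom}.} First, on $\sigma=\tfrac12$, we use an approximate functional equation to write $\mathfrak{L}(\tfrac12+it,\rchi)$ as two Dirichlet polynomials of length $N\asymp (qT)^{A+\epsilon}$, plus a negligible error. By the Montgomery--Vaughan mean value theorem,
\[
\int_T^{2T}\Bigl|\sum_{n\le N}\frac{a(n)\rchi(n)}{n^{1/2+it}}\Bigr|^2dt\ll \sum_{n\le N}(T+n)\frac{|a(n)|^2}{n}\ll (T+N)(qT)^{\epsilon}\ll (qT)^{A+\epsilon}.
\]
This matches $2A(1-\tfrac12)=A$.

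Second, on $\sigma=1+\epsilon$, the same mean value theorem applied to the absolutely convergent series gives $\ll T(qT)^{\epsilon}$. This is slightly weaker than the claimed $(qT)^{\epsilon}$, and this is where care is needed.

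For intermediate $\sigma$, interpolate the mean square via Gabriel's convexity theorem, or a Hardy--Littlewood--P\'olya type log-convexity for $\int|F|^2$ applied to $F(s)=\mathfrak{L}(s,\rchi)$ times a smooth weight localising $t$ to $[T/2,3T]$. This gives an exponent linear in $\sigma$ between $A$ at $\sigma=\tfrac12$ and the exponent at $1+\epsilon$.

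\textbf{Main obstacle.} The difficulty is the endpoint $\sigma=1+\epsilon$, where the claimed bound $(qT)^{\epsilon}$ is smaller than the trivial $T$ coming from integrating a bounded function over an interval of length $T$. I would first check whether the statement is meant with the length factor already absorbed. A bound of the form $T\cdot(qT)^{2A(1-\sigma)+\epsilon}$ is what the interpolation above naturally produces, and it is also what the applications need, since the integrals are divided by $T$ in dyadic contour estimates.

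If the sharper form is genuinely required for $\sigma$ near $\tfrac12$, I would instead apply Montgomery--Vaughan directly at each $\sigma$, with the approximate functional equation at $\sigma+it$. This yields $\ll (T+N)N^{1-2\sigma}+ (\text{dual sum})$, where the dual sum carries the factor $|\gamma|^2\asymp (qT)^{2A(1/2-\sigma)}$ and has length $(qT)^{A}$. This gives $(qT)^{2A(1-\sigma)+\epsilon}$ whenever $T\le (qT)^{A}$, which always holds since $A\ge1$. I expect this reconciliation of the factor $T$ at the $\sigma\approx1$ end to be the step needing the most care; everything else is a standard convexity-type argument.
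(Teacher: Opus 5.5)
Your Phragm\'en--Lindel\"of argument for \eqref{eq:genLbound} is fine. It is the standard proof behind the result that the paper simply cites from Jiang--L\"u.

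The problem is \eqref{eq:genL2mom}. You are right that something is off near $\sigma=1$, but no amount of care will fix it: the bound as stated is false there. At $\sigma=1+\epsilon$ the Montgomery--Vaughan mean value theorem gives
$\int_T^{2T}|\mathfrak{L}(1+\epsilon+it,\chi)|^2\,dt=T\sum_{n}|a(n)\chi(n)|^2n^{-2-2\epsilon}+O(1)\ge T-O(1)$.
The claimed right-hand side, however, is $(qT)^{(1-2A)\epsilon}\le 1$. What is true, and what your method actually produces, is
$\int_T^{2T}|\mathfrak{L}(\sigma+it,\chi)|^2\,dt\ll (qT)^{\epsilon}\bigl(T+(qT)^{2A(1-\sigma)}\bigr)\le (qT)^{\max\{2A(1-\sigma),1\}+\epsilon}$.
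This is how the estimate is normally stated; the version here has dropped the $\max$.

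Your ``direct'' computation misses this because you write the Montgomery--Vaughan output as $(T+N)N^{1-2\sigma}$. For $\sigma>\frac12$ the diagonal term is $T\sum_{n\le N}|a(n)|^2n^{-2\sigma}\ge T|a(1)|^2=T$, because that sum is dominated by small $n$, not by $n\approx N$. The contributions are then:
\begin{itemize}
\item the first sum gives $(qT)^{\epsilon}(T+N^{2-2\sigma})$;
\item the dual sum, after the factor $|\gamma|^2\asymp(qT)^{2A(1-2\sigma)}$, gives $(qT)^{\epsilon}(TN^{1-2\sigma}+N^{2-2\sigma})$.
\end{itemize}
With $N=(qT)^{A}$ the total is $(qT)^{\epsilon}\bigl(T+(qT)^{2A(1-\sigma)}\bigr)$. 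Reaching $(qT)^{2A(1-\sigma)+\epsilon}$ therefore needs $T\le(qT)^{2A(1-\sigma)}$, not $T\le(qT)^{A}$. The hypothesis $A\ge1$ guarantees this only at $\sigma=\frac12$. Your conclusion that it ``always holds'' contradicts your own observation at $\sigma=1+\epsilon$.

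Your other fallback is also wrong: $T\cdot(qT)^{2A(1-\sigma)+\epsilon}$ is not enough for the applications. The lemma is only used on $\sigma=\frac12+\epsilon$, inside expressions of the shape $\frac{1}{T_1}\bigl(\int_{T_1}^{2T_1}|L_1|^2\,dt\bigr)^{1/2}\bigl(\int_{T_1}^{2T_1}|L_2|^2\,dt\bigr)^{1/2}$, for instance the bound for $I_2$ in Lemma~\ref{lem:r_2chi}. An extra factor $T_1$ in each second moment costs a full power of $T_1$. Then $q^{\frac{j+1}{4}}T^{\frac{j+1}{2}-1}$ becomes $q^{\frac{j+1}{4}}T^{\frac{j+1}{2}}$, and after optimising $T$ the saving $x^{-\frac{1}{j+1}}$ shrinks to $x^{-\frac{1}{j+3}}$. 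The convexity interpolation you describe actually gives about $T^{2\sigma-1}(qT)^{2A(1-\sigma)}$, with no extra $T$ at $\sigma=\frac12$, so replacing $T^{2\sigma-1}$ by $T$ is a needless loss.

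To finish correctly, prove the corrected bound $(qT)^{\epsilon}\bigl(T+(qT)^{2A(1-\sigma)}\bigr)$. Your approximate functional equation plus mean value argument does this once the diagonal term is computed correctly. Then note that on $\sigma=\frac12+\epsilon$ this is $\ll(qT)^{A+\epsilon}$ because $A\ge1$, which is exactly what the applications use.
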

\begin{proof}
For the proof of \eqref{eq:genL2mom} and \eqref{eq:genLbound}, see \cite{JiangLu}.
\end{proof}
\end{subsection}

\begin{lemma} \label{lem:int}
    Let $f:\mathbb{R} \mapsto \mathbb{R}$, and $T > 1$. Then \begin{align}
        \int_{1}^{T}\frac{|f(t)|}{t}dt \ll \log T \sup_{1 \leq T_1 \leq T} \frac{1}{T_1}\int_{T_1}^{2T_1}|f(t)|dt.
    \end{align}
\end{lemma}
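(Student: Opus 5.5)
The plan is a dyadic decomposition of the range of $t$. I write $[1,T]$ as a union of at most $\lfloor \log_2 T\rfloor +1 \ll \log T$ intervals $[T_1,2T_1]\cap[1,T]$ with $T_1 = 2^{i}$, $0\le i\le \log_2 T$. Each such $T_1$ lies in $[1,T]$. On each interval $1/t \le 1/T_1$, so
\[
\int_{T_1}^{\min(2T_1,T)}\frac{|f(t)|}{t}\,dt \le \frac{1}{T_1}\int_{T_1}^{2T_1}|f(t)|\,dt \le \sup_{1\le T'\le T}\frac{1}{T'}\int_{T'}^{2T'}|f(t)|\,dt .
\]
Extending the upper limit to $2T_1$ uses $|f|\ge 0$. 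I assume $f$ is locally integrable, as it is in all applications.

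Summing over the $\ll \log T$ dyadic pieces gives
\[
\int_{1}^{T}\frac{|f(t)|}{t}\,dt \le \bigl(\log_2 T+1\bigr)\sup_{1\le T_1\le T}\frac{1}{T_1}\int_{T_1}^{2T_1}|f(t)|\,dt .
\]
Since $T>1$, the factor $\log_2 T + 1$ should be bounded by a constant multiple of $\log T$.

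This is the only point that needs care. For $T\ge 2$ we have $\log_2 T+1\le 3\log_2 T \ll \log T$. For $1<T<2$, only one piece $[1,T]$ occurs, so the integral is bounded by the $T_1=1$ term alone. The factor $\log T$ then degenerates, and I handle it by the convention (implicit in every application in the paper, where $T$ is large) that $\log T$ is replaced by $\log(T+2)$, or by simply assuming $T\ge 2$. No other obstacle arises; the lemma is elementary.
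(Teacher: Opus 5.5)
The paper states this lemma without proof, and your dyadic decomposition is the standard argument it implicitly relies on; it is correct as written, including the step that extends each piece to $[T_1,2T_1]$ using $|f|\ge 0$. Your caveat for $1<T<2$ is necessary, not merely a convention: take $T=1+\eta$, with $f=\eta^{-1}$ on $[1,1+\eta]$ and $f=0$ elsewhere. Then the left side is $\eta^{-1}\log(1+\eta)$ and the right side is $\log(1+\eta)$, so their ratio $\eta^{-1}$ is unbounded and the literal statement fails uniformly near $T=1$. This is harmless, because every application in the paper has $T$ large ($10\le T\le x$ in the Perron-formula arguments).
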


\begin{lemma}\label{dominating main term}
Let \(f: \mathbb{R} \to \mathbb{R}_{>0}\) satisfy
\[
f(x) = Dx^{A} + O(x^{B})
\]
as \(x \to \infty\), where \(A, B \in \mathbb{R}\), \( D>0\), and \(A > B\).  
Then there exists \(X_0 > 0\) such that for all \(x > X_0\),
\[
f(x) \ge \frac{D}{2} x^{A}.
\]

\end{lemma}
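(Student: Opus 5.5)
The statement to prove is the last lemma: if $f(x)=Dx^A+O(x^B)$ with $D>0$ and $A>B$, then $f(x)\ge \frac{D}{2}x^A$ for all sufficiently large $x$. The plan is to unwind the $O$-notation and compare the error term with half of the main term.

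First, by the definition of $O(x^B)$, there are constants $C>0$ and $X_1>0$ such that
\[
|f(x)-Dx^A|\le Cx^B \qquad \text{for all } x>X_1.
\]
Hence for $x>X_1$ we have the lower bound
\[
f(x)\ge Dx^A-Cx^B = x^A\bigl(D-Cx^{B-A}\bigr).
\]

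Second, since $A>B$, the exponent $B-A$ is negative, so $x^{B-A}\to 0$ as $x\to\infty$. Concretely, $Cx^{B-A}\le D/2$ holds exactly when
\[
x\ge (2C/D)^{1/(A-B)}.
\]
So set
\[
X_0:=\max\bigl\{X_1,\,(2C/D)^{1/(A-B)}\bigr\}.
\]
For every $x>X_0$ this gives $D-Cx^{B-A}\ge D/2$, and therefore $f(x)\ge \frac{D}{2}x^A$, as claimed. Since $x^A>0$, multiplying the inequality through by $x^A$ preserves its direction.

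There is no real obstacle. The hypothesis $D>0$ is what makes the threshold $(2C/D)^{1/(A-B)}$ well defined and the final bound meaningful. The positivity of $f$ is not actually needed for the argument.
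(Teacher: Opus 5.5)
Your proof is correct and complete: bounding the error by $Cx^B$ for $x>X_1$ and taking $X_0=\max\{X_1,(2C/D)^{1/(A-B)}\}$ gives exactly the claimed inequality, and you are right that positivity of $f$ is never used. The paper states this lemma without any proof, treating it as evident before using it in the sign-change sections, so your argument simply supplies the standard justification the authors omitted.
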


\begin{lemma} \label{lem:r_2chi}
Let $j\geq 2$ be a fixed integer. For any fixed $\epsilon>0$ and all sufficiently large $x$, we have
    \begin{align}
        \sum_{\substack{n\leq x }}\lambda_{\mathrm{sym}^jf}(n)\rchi(n)r_2(n) = \begin{cases}
           O \left( x^{1-\frac{1}{j+1}+\epsilon}q^{\frac{1}{2}+\epsilon}\right) \quad \text{ when } \rchi = \rchi_0 \text{ or } \rchi \rchi_4 = \rchi_0, \\
           O \left( x^{1-\frac{1}{j+1}+\epsilon}q^{1+\epsilon}\right) \quad \text{ when } \rchi \neq \rchi_0 \text{ and } \rchi \rchi_4 \neq \rchi_0.
        \end{cases}
    \end{align}
\end{lemma}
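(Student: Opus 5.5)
By \eqref{eq:Lmbda2Def} the sum equals $4\sum_{n\le x}\lambda_{\mathrm{sym}^jf}(n)\rchi(n)l_1(n)$. We will study it through the Dirichlet series $F_j^{(1)}(s,\rchi)$ of Lemma~\ref{lem:F1}, using Perron's formula and a contour shift.

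\textbf{Perron's formula.} Truncated Perron's formula gives, with $b=1+\epsilon$ and a parameter $T\le x$,
\[
\sum_{n\le x}\lambda_{\mathrm{sym}^jf}(n)\rchi(n)l_1(n)=\frac{1}{2\pi i}\int_{b-iT}^{b+iT}F_j^{(1)}(s,\rchi)\frac{x^s}{s}\,ds+O\!\left(\frac{x^{1+\epsilon}}{T}\right),
\]
where the error uses the bound $\lambda_{\mathrm{sym}^jf}(n)l_1(n)\ll n^\epsilon$.

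\textbf{Contour shift.} We factor $F_j^{(1)}=G_j^{(1)}H_j^{(1)}$ as in Lemma~\ref{lem:F1}, with $H_j^{(1)}\ll 1$ for $\Re s\ge \frac12+\epsilon$. We then move the line of integration to $\Re s=\frac12+\epsilon$. No poles are crossed: $L(s,\mathrm{sym}^jf\otimes\psi)$ is entire for every character $\psi$, since $\mathrm{sym}^jf$ is cuspidal (Newton--Thorne) and remains cuspidal after twisting. This is why the statement has no main term, even when $\rchi=\rchi_0$ or $\rchi\rchi_4=\rchi_0$.

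\textbf{Reduction to primitive characters.} If $\rchi$ or $\rchi\rchi_4$ is imprimitive, we pass to the inducing primitive characters. The analogue of Lemma~\ref{lem:PrimitiveChar} for $\mathrm{sym}^j$ costs only a factor $q^\epsilon$ in the strip.

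\textbf{Convexity bounds.} Each $L(s,\mathrm{sym}^jf\otimes\psi)$ has degree $j+1$. Lemma~\ref{lem:genLbound} with $2A=j+1$ gives
\[
L(\sigma+it,\mathrm{sym}^jf\otimes\psi)\ll \bigl(q_\psi(1+|t|)\bigr)^{\frac{j+1}{2}(1-\sigma)+\epsilon}.
\]
The same lemma also gives the mean square
\[
\int_T^{2T}|L|^2\,dt\ll (q_\psi T)^{(j+1)(1-\sigma)+\epsilon}.
\]

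\textbf{Horizontal segments.} These contribute
\[
\ll \max_{\frac12\le\sigma\le b} x^\sigma T^{-1}(qT)^{(j+1)(1-\sigma)+\epsilon}.
\]

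\textbf{Vertical segment.} On $\Re s=\frac12+\epsilon$ we apply Cauchy--Schwarz to the product of the two $L$-functions, combined with Lemma~\ref{lem:int}. This gives a contribution
\[
\ll x^{1/2+\epsilon}\sup_{T_1\le T}\frac{1}{T_1}\bigl(q_1q_2T_1^2\bigr)^{\frac{j+1}{4}+\epsilon}.
\]
Here $q_1,q_2$ are the conductors of $\rchi$ and $\rchi\rchi_4$. When $\rchi=\rchi_0$ or $\rchi\rchi_4=\rchi_0$, one of them is $O(1)$, so the $q$-power is halved. This accounts for the $q^{1/2}$ versus $q$ dichotomy in the statement.

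\textbf{Choice of $T$.} We will choose $T=x^{1/(j+1)}$, possibly adjusted by a power of $q$, so that the Perron error $x^{1+\epsilon}/T$ matches the target $x^{1-\frac1{j+1}+\epsilon}$. We then check that the horizontal and vertical contributions are dominated by it, with the $q$-factors being $q^{1/2+\epsilon}$ or $q^{1+\epsilon}$ according to the case.

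\textbf{Main obstacle.} The delicate step is balancing the vertical-line contribution, which for large $j$ is not obviously below $x^{1-1/(j+1)}$ when using only convexity. The fallback is not to shift all the way to $\frac12$, but to a line $\sigma_0$ inside the strip. On that line we use the pointwise convexity bound, with $q$-exponent $(j+1)(1-\sigma_0)/2$ for each factor, and we choose $\sigma_0$ together with $T$ so that everything is $\ll x^{1-\frac1{j+1}+\epsilon}q^{1+\epsilon}$ in the generic case, and half the $q$-exponent in the degenerate case. The whole point of the hypothesis in Theorem~\ref{thm:main} restricting $q$ is to keep the $q$-powers absorbed in this range.
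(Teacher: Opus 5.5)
Your main line is the paper's proof: Perron's formula at $\Re s=1+\epsilon$, a shift to $\Re s=\tfrac12+\epsilon$ with no poles crossed, convexity on the horizontal segments, and Cauchy--Schwarz with the mean square of Lemma~\ref{lem:genLbound} (after Lemma~\ref{lem:int}) on the vertical line. Your bounds for each piece agree with the paper's, and your holomorphy justification via Newton--Thorne is more solid than the paper's appeal to Lemma~\ref{lem:F1}.

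The problem is your last paragraph: the ``main obstacle'' does not exist. Put $T=x^{1/(j+1)}(q_1q_2)^{-1/2}$ into your own vertical bound. For every $j$,
\[
x^{\frac12}(q_1q_2)^{\frac{j+1}{4}}T^{\frac{j+1}{2}-1}=x^{1-\frac{1}{j+1}}(q_1q_2)^{\frac12}=\frac{x}{T},
\]
so the vertical term and the Perron error balance exactly. The horizontal segments are controlled by their endpoint values, which are these same two terms. With $q_1q_2\ll q$ this is the paper's choice $T=x^{1/(j+1)}q^{-1/2}$, giving $x^{1-\frac{1}{j+1}+\epsilon}q^{\frac12+\epsilon}$. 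With $q_1q_2\ll q^2$ it is $T=x^{1/(j+1)}q^{-1}$, giving $x^{1-\frac{1}{j+1}+\epsilon}q^{1+\epsilon}$. The lemma needs no restriction on $q$: if this $T$ is below $1$, the claimed bound already exceeds the trivial bound $x^{1+\epsilon}$. (In the degenerate cases both $\chi$ and $\chi\chi_4$ are in fact induced by characters of conductor dividing $4$, so even $q^{\epsilon}$ would be available there.)

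You should also discard the fallback, because it would fail. Pointwise convexity on a line $\Re s=\sigma_0$ gives
\[
\int_1^T|L_1L_2|\,\frac{dt}{t}\ll T^{(j+1)(1-\sigma_0)+\epsilon}
\]
(ignoring $q$). This is a full factor $T$ worse than the Cauchy--Schwarz/mean-square bound $T^{(j+1)(1-\sigma_0)-1+\epsilon}$. Balancing $x^{\sigma_0}T^{(j+1)(1-\sigma_0)}$ against $x/T$ forces $T=x^{u/((j+1)u+1)}$ with $u=1-\sigma_0$. Since $H_j^{(1)}$ stops converging left of $\tfrac12$, you need $u\le\tfrac12$, and the best exponent you can reach is $1-\frac{1}{j+3}$, strictly weaker than the lemma. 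With the mean square, by contrast, the position of $\sigma_0$ does not matter: balancing always gives $T=x^{1/(j+1)}(q_1q_2)^{-1/2}$. The second-moment saving of exactly one power of $T$ is therefore the essential input, and shifting all the way to $\tfrac12+\epsilon$ as you first planned is the right step.
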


\begin{proof}
Let $\rchi$ be a Dirichlet character modulo $q$. We estimate the term $\sum_{\substack{n\leq x }}\lambda_{\mathrm{sym}^jf}(n)\rchi_0(n)l_1(n)$. As in the previous paper, We begin by applying Perron's formula to $F_{j}^{(1)}(s)$ with $\eta=1+\epsilon$, and $10\leq T\leq x$. Thus, we have,
\begin{align}
    \sum_{\substack{n\leq x }}\lambda_{\mathrm{sym}^jf}(n)\rchi_0(n)l_1(n) & = \int_{1+\epsilon-iT}^{1+\epsilon+iT}F_{j}^{(1)}(s,\rchi_ 0)\frac{x^s}{s}ds + O\left( \frac{x^{1+\epsilon}}{T}\right).
\end{align}

After moving the line of integration to $\Re(s)=\frac{1}{2}+\epsilon$, by Cauchy's residue theorem, there are no poles due to the Lemma~\ref{lem:F1}.
So we obtain,

\begin{align} \label{eq:LambdaEq}
    \sum_{\substack{n\leq x}}\lambda_{\mathrm{sym}^{j}f}(n)l_1(n)\rchi_ 0(n)&= \frac{1}{2\pi i}\left\{\int_{\frac{1}{2}+\epsilon-iT}^{\frac{1}{2}+\epsilon+iT}+\int_{1+\epsilon-iT}^{\frac{1}{2}+\epsilon-iT}+\int_{\frac{1}{2}+\epsilon+iT}^{1+\epsilon+iT}\right\}F_{j}^{(1)}(s,\rchi_ 0)\frac{x^{s}}{s}ds \\
    &\quad + O\left(\frac{x^{1+\epsilon}}{T}\right) \\
    &= \frac{1}{2\pi i}(J_{1}+J_{2}+J_{3})+O\left(\frac{x^{1+\epsilon}}{T}\right). \quad \text{(say)} 
\end{align}

Contribution of horizontal line integrals ($J_{2}$ and $J_{3}$) in absolute value (using Lemmas~\ref{lem:F1} and~\ref{lem:genLbound}) is

\begin{align}
    |J_2+J_3| &= \left| \left( \int_{1+\epsilon-iT}^{\frac{1}{2}+\epsilon-iT}+\int_{\frac{1}{2}+\epsilon+iT}^{1+\epsilon+iT} \right)F_j^{(1)}(s, \rchi_ 0)\frac{x^s}{s}\right| \\
    &\ll  \left( \int_{\frac{1}{2}+\epsilon}^{1+\epsilon} + \int_{\frac{1}{2}+\epsilon}^{1+\epsilon}\right) \frac{|L(\sigma+iT, \mathrm{sym}^j f \otimes \rchi_ 0) L(\sigma+iT, \mathrm{sym}^j f \otimes \rchi_ 4\rchi_ 0)|}{T}x^\sigma d\sigma,
\end{align}

\begin{align*}
    J_2+J_3 &\ll \int_{\frac{1}{2}+\epsilon}^{1+\epsilon} \frac{|T|^{\frac{j+1}{2}(1-\sigma)+\epsilon}|qT|^{\frac{j+1}{2}(1-\sigma)+\epsilon} q^\epsilon}{T}x^{\sigma}d\sigma \quad \text{ (using~\ref{lem:genLbound}) } \\
&\ll \frac{1}{T} \max_{\frac{1}{2}+\epsilon < \sigma < 1+\epsilon} \left( x^\sigma q^{\frac{j+1}{2}(1-\sigma)+\epsilon}T^{(j+1)(1-\sigma)+\epsilon}\right) .
\end{align*}

Clearly, $x^\sigma q^{\frac{j+1}{2}(1-\sigma)+\epsilon}T^{(j+1)(1-\sigma)+\epsilon}$ is a monotonic function, so the maximum occurs at the endpoints of the interval. We take values at both extreme points of the interval $[\frac{1}{2}+\epsilon, 1+\epsilon]$. So 
\begin{align*}
    J_2+J_3 &\ll \frac{1}{T}\left( x^{\frac{1}{2}+\epsilon}q^{\frac{j+1}{4}+\epsilon}T^{\frac{(j+1)}{2} +\epsilon)} \right) +  \frac{1}{T}\left( x^{1+\epsilon}(qT)^{\epsilon}
    \right) \\
    &\ll \frac{x^{1+\epsilon}}{T}q^{\epsilon}+x^{\frac{1}{2}+\epsilon}q^{\frac{j+1}{4}+\epsilon}T^{\frac{j+1}{2}-1+\epsilon}.
\end{align*}

Now contribution of vertical line integral $J_1$ in absolute value is \begin{align*}
    J_1 &= \int_{\frac{1}{2}+\epsilon - iT}^{\frac{1}{2}+\epsilon+iT}F_j^{(1)}(s, \rchi_ 0) \frac{x^{\frac{1}{2}+\epsilon+it}}{\frac{1}{2}+\epsilon+it}ds \\
    &= x^{\frac{1}{2}+\epsilon}\left( \int_{0\leq |t|\leq 1} + \int_{1 \leq |t| \leq T} \right)F_j^{(1)}\left(\frac{1}{2}+\epsilon+it, \rchi_ 0\right) \frac{x^{it}}{\frac{1}{2}+\epsilon+it}idt\\
    &= I_1+I_2.
\end{align*} 

Now using the Lemma~\ref{lem:genLbound}, we have
\begin{align*}
    I_2 &\ll x^{\frac{1}{2}+\epsilon} \int_{1}^{T}|L(\frac{1}{2}+\epsilon+it, \mathrm{sym}^j f \otimes \rchi_ 0) L(\frac{1}{2}+\epsilon+it, \mathrm{sym}^j f \otimes \rchi_ 4\rchi_ 0)| \frac{1}{t} dt \\
    &\ll x^{\frac{1}{2}+\epsilon} \log T\text{ }\sup_{1 \leq T_1 \leq T}\frac{1}{T_1}\int_{T_1}^{2T_1}|L(\frac{1}{2}+\epsilon+it, \mathrm{sym}^j f \otimes \rchi_ 0) L(\frac{1}{2}+\epsilon+it, \mathrm{sym}^j f \otimes \rchi_ 4\rchi_ 0)| dt \\
    & \ll x^{\frac{1}{2}+\epsilon}\log T\text{ }\sup_{1 \leq T_1 \leq T}\frac{1}{T_1} \left( \int_{T_1}^{2T_1}|L(\frac{1}{2}+\epsilon+it, \mathrm{sym}^j f\otimes \rchi_ 0)|^2 dt \right)^{\frac{1}{2}} \\
    & \times\left( \int_{T_1}^{2T_1} |L(\frac{1}{2}+\epsilon+it, \mathrm{sym}^j f\otimes \rchi_ 4\rchi_ 0)|^2 dt \right)^{\frac{1}{2}} \\
    & \ll x^{\frac{1}{2}+\epsilon} \sup_{1 \leq T_1 \leq T}\frac{1}{T_1}\left(T_1^{\max\{(j+1)(1-\frac{1}{2}-\epsilon), 0\} + \epsilon})^{\frac{1}{2}}\right)\left((qT_1)^{\max\{(j+1)(1-\frac{1}{2}-\epsilon), 0\} + \epsilon})^{\frac{1}{2}}\right)q^{\epsilon} \quad \text{(using~\ref{lem:genLbound})} \\
    & \ll x^{\frac{1}{2}+\epsilon}q^{\frac{j+1}{4}+\epsilon}T^{\frac{j+1}{2}-1+\epsilon}.
\end{align*}

The first integral gives \begin{align*}
    I_1 &= x^{\frac{1}{2}+\epsilon}\int_{0 \leq |t| \leq 1}F_j^{(1)}\left(\frac{1}{2}+\epsilon+it,\rchi_0\right) \frac{x^{it}}{\frac{1}{2}+\epsilon+it}dt.
    \end{align*}
    The above integration is finite. If not, then the left-hand side of \eqref{eq:LambdaEq} would be infinite. As the other integral is finite, this is a contradiction. So,
    \begin{align*}
        I_1 \ll x^{\frac{1}{2}+\epsilon}.
    \end{align*}
    
    Combining $I_1$ and $I_2$, we have \begin{align}
    J_1 \ll x^{\frac{1}{2}+\epsilon} + x^{\frac{1}{2}+\epsilon} q^{\frac{j+1}{4}}T^{\frac{j+1}{2}-1+\epsilon}.
    \end{align}
    
    Thus we have \begin{align}
    J_1+J_2+J_3 \ll \frac{x^{1+\epsilon}}{T}q^{\epsilon}+x^{\frac{1}{2}+\epsilon} q^{\frac{j+1}{4}+\epsilon}T^{\frac{j+1}{2}-1+\epsilon}.
    \end{align}

    So $\sum_{\substack{n\leq x }}\lambda_{\mathrm{sym}^jf}(n)\rchi_0(n)l_1(n) = O \left(\frac{x^{1+\epsilon}}{T}q^{\epsilon}+x^{\frac{1}{2}+\epsilon} q^{\frac{j+1}{4}+\epsilon}T^{\frac{j+1}{2}-1+\epsilon} \right)$.

     Now put $T = \frac{x^{\frac{1}{j+1}}}{q^{\frac{1}{2}}}$, then \begin{align}
        \sum_{\substack{n\leq x}}\lambda_{\mathrm{sym}^jf}(n)\rchi_0(n)l_1(n) = O \left( x^{1-\frac{1}{j+1}+\epsilon}q^{\frac{1}{2}+\epsilon}\right).
    \end{align}
    Note that the same bound of  $\sum_{\substack{n\leq x}}\lambda_{\mathrm{sym}^jf}(n)\rchi(n)l_1(n)$ happens when $\rchi \rchi_4 = \rchi_0$,
    and after doing the same calculation for $\sum_{\substack{n\leq x}}\lambda_{\mathrm{sym}^jf}(n)\rchi(n)l_1(n)$, where $\rchi$ is any primitive or non primitive Dirichlet charachter modulo $q$ and $\rchi \rchi_4 \neq \rchi_0$, we have \begin{align}
        \sum_{\substack{n\leq x}}\lambda_{\mathrm{sym}^jf}(n)\rchi(n)l_1(n) = O \left( x^{1-\frac{1}{j+1}+\epsilon}q^{1+\epsilon}\right).
    \end{align}
    
\end{proof}

\begin{lemma} \label{lem:r4681012chi}
Let $j\geq 2$ be a fixed integer. For any fixed $\epsilon>0$ and all sufficiently large $x$, we have
    \begin{align}
        \sum_{\substack{n\leq x}}\lambda_{\mathrm{sym}^jf}(n)\rchi(n)r_m(n) = \begin{cases}
            O \left( x^{\frac{m}{2}-\frac{2}{j+1}+\epsilon}q^{\epsilon} \right) \quad \text{ when } \rchi = \rchi_0, \\
            O \left( x^{\frac{m}{2}-\frac{2}{j+3}+\epsilon}q^{\frac{j+1}{j+3}+\epsilon} \right) \quad \text{ when } \rchi \neq \rchi_0.
        \end{cases}
    \end{align}
\end{lemma}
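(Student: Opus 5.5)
The plan mirrors the proof of Lemma~\ref{lem:r_2chi}, applied to each of the decompositions \eqref{eq:Lmbda4Def}--\eqref{eq:Lmbda12Def}. For $m\in\{4,6,8\}$ the sum $\sum_{n\le x}\lambda_{\mathrm{sym}^jf}(n)\rchi(n)r_m(n)$ is a fixed linear combination of sums $\sum_{n\le x}\lambda_{\mathrm{sym}^jf}(n)\rchi(n)l(n)$ with $l\in\{l_2,l_3,v_3,l_4\}$. For $m=10,12$ there are additional cusp-form pieces coming from $a_n$ and $b_n$. The generating series of these cusp-form pieces twisted by $\lambda_{\mathrm{sym}^jf}\rchi$ are Rankin--Selberg type $L$-functions of degree $2(j+1)$, shifted by $\frac m2-1$. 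I would treat them with Lemma~\ref{lem:genLbound} in exactly the same way, taking a factorisation lemma analogous to Lemmas~\ref{lem:F5'}--\ref{lem:F6} as given. Writing $r=\frac m2$, each piece has the shape $F(s,\rchi)=L(s-(r-1),\mathrm{sym}^jf\otimes\psi_1)\,L(s,\mathrm{sym}^jf\otimes\psi_2)\,H(s,\rchi)$, with $\psi_i\in\{\rchi,\rchi_4\rchi\}$ and $H$ absolutely convergent for $\Re(s)>r-\frac12$ (Lemmas~\ref{lem:F2}--\ref{lem:F6}). I apply Perron's formula with $\eta=r+\epsilon$, $10\le T\le x$, which gives error $O(x^{r+\epsilon}/T)$.

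I then shift the contour to $\Re(s)=r-\frac12+\epsilon$. No poles are crossed, since $\mathrm{sym}^jf\otimes\psi$ is entire for $j\ge1$. On this line, the factor $L(s,\mathrm{sym}^jf\otimes\psi_2)$ lies at real part $r-\frac12>1$, so it is $\ll 1$. The other factor sits at real part $\frac12+\epsilon$. By Lemma~\ref{lem:genLbound} with $A=\frac{j+1}{2}$, its mean square over $[T_1,2T_1]$ is $\ll (qT_1)^{\frac{j+1}{2}+\epsilon}$. Lemma~\ref{lem:int} and Cauchy--Schwarz then bound the vertical integral by $x^{r-\frac12+\epsilon}(qT)^{\frac{j+1}{4}+\epsilon}T^{-\frac12}$, and the horizontal segments are handled as before using \eqref{eq:genLbound}. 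Two cases remain.

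\textbf{Case $\rchi\ne\rchi_0$.} Balancing $x^{r}/T$ against $x^{r-\frac12}q^{\frac{j+1}{4}}T^{\frac{j+1}{4}-\frac12}$ gives $T^{\frac{j+3}{4}}=x^{1/2}q^{-\frac{j+1}{4}}$. This yields $T=x^{\frac{2}{j+3}}q^{-\frac{j+1}{j+3}}$ and the bound $x^{r-\frac2{j+3}+\epsilon}q^{\frac{j+1}{j+3}+\epsilon}$. If $\rchi$ is non-primitive, Lemma~\ref{lem:PrimitiveChar} passes to the inducing primitive character at a cost of $q^{\epsilon}$.

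\textbf{Case $\rchi=\rchi_0$.} Here the conductor does not enter: by Lemma~\ref{lem:PrincipalChar}, $L(s,\mathrm{sym}^jf\otimes\rchi_0)$ is $L(s,\mathrm{sym}^jf)$ times a $q^\epsilon$ Euler factor. I then expect to exploit the untwisted factor more efficiently, using subconvexity or moment input in $t$ as in Lemmas~\ref{lem:zetabound} and~\ref{lem:LSymFChi}, or a pointwise convexity bound that is balanced differently. The target is $x^{r-\frac{2}{j+1}+\epsilon}q^{\epsilon}$. The expected choice is $T\approx x^{2/(j+1)}$, with the line placed so that the $L(s-(r-1))$ factor contributes $T^{\frac{j+1}{2}(\frac12-\epsilon)}$ pointwise and the $1/|s|$ decay absorbs the remainder.

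The main obstacle is the principal-character exponent $\frac{2}{j+1}$. The naive balancing gives only $\frac{2}{j+3}$, so I would first try to get the better saving by shifting further left, to $\Re(s)=r-1+\epsilon$, i.e. real part $\epsilon$ for the shifted factor. There Lemma~\ref{lem:genLbound} holds down to $\sigma=-\epsilon$, and the second factor at real part $r-1$ is still $\ll1$ for $m\ge4$. The prerequisite is checking that $H$ still converges there: $r-1\ge r-\frac12$ fails, so $H$ would need a further factorisation or a direct argument. That is the step I am least sure of.
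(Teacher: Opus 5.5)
For $\rchi\neq\rchi_0$ your argument is the paper's. You use Perron at $\eta=r+\epsilon$ (with $r=\frac{m}{2}$ as in your notation), the contour $\Re s=r-\frac12+\epsilon$, the mean-square bound of Lemma~\ref{lem:genLbound} with Cauchy--Schwarz, and $T=x^{\frac{2}{j+3}}q^{-\frac{j+1}{j+3}}$. The gap is the principal-character case: nothing in your proposal yields the saving $\frac{2}{j+1}$, and your suggested remedy does not either.

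Convergence of $H$ is not the obstruction. Since $1/L_p$ is a polynomial of degree $j+1$, the local factors of $H$ are $1+O\bigl(\sum_{2\le k\le j+1}p^{(r-1)(k-1)-k\sigma}\bigr)$, which is harmless on $\Re s=r-1+\epsilon$. The real problem is that on that line the shifted factor sits at real part $\epsilon$, where \eqref{eq:genLbound} gives only $|t|^{\frac{j+1}{2}+\epsilon}$. The vertical integral is then $\ll x^{r-1+\epsilon}T^{\frac{j+1}{2}+\epsilon}$, and balancing against $x^{r+\epsilon}/T$ gives back $T=x^{\frac{2}{j+3}}$ and the same saving $\frac{2}{j+3}$.

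Your ``expected choice'' $T\approx x^{\frac{2}{j+1}}$ does not balance anything. On $\Re s=r-\frac12+\epsilon$ it turns the mean-square bound $x^{r-\frac12+\epsilon}T^{\frac{j+1}{4}-\frac12}$ into $x^{r-\frac{1}{j+1}+\epsilon}$. On $\Re s=r-1+\epsilon$ it turns the bound into $x^{r+\epsilon}$. Convexity-strength input, which is all Lemma~\ref{lem:genLbound} supplies for general $j$, does not reach $x^{r-\frac{2}{j+1}}$. For $j=2$, $m=4$ the claim is $x^{4/3+\epsilon}$, which is the size predicted by the conjectural bound $\sum_{n\le y}\lambda_{\mathrm{sym}^2f}(n)\ll y^{1/3+\epsilon}$.

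Note, however, that the paper's own proof does exactly your naive computation when $\rchi=\rchi_0$ (contour at $\Re s=r-\frac12+\epsilon$, $T=x^{\frac{2}{j+3}}$). It concludes only $O\bigl(x^{\frac{m}{2}-\frac{2}{j+3}+\epsilon}q^{\epsilon}\bigr)$, and that is also the bound it uses for $\sum_1$ in the proof of Theorem~\ref{thm:main(4681012)}. So the exponent $\frac{2}{j+1}$ in the first case of the statement is not established by the paper either, and it looks like a misprint. The quantity $\frac{2}{j+1}$ legitimately appears only in the range condition $q\ll x^{\frac{2}{j+1}}$, which is what makes $T=x^{\frac{2}{j+3}}q^{-\frac{j+1}{j+3}}\ge 10$. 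Stopping at the naive balancing, your argument proves exactly what the paper proves.

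A minor point: for $m=10,12$ you do not need Rankin--Selberg factorisations for the $a_n$, $b_n$ pieces. The quoted bounds $a_n=O(n^3)$ and $b_n=O(n^3\log\log n)$ trivially give $O(x^{4+\epsilon})$, which is already below $x^{\frac{m}{2}-\frac{2}{j+3}}$. This is effectively how the paper discards them.
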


\begin{proof}
We estimate the sum $\sum_{\substack{n\leq x}}\lambda_{\mathrm{sym}^jf}(n)\rchi_0(n)l_2(n)$.

Now By Perron's formula, we have \begin{align}
 \sum_{\substack{n\leq x}}\lambda_{\mathrm{sym}^jf}(n)\rchi_0(n)l_2(n) & = \int_{2+\epsilon-iT}^{2+\epsilon+iT}F_{j}^{(2)}(s,\rchi_ 0)\frac{x^s}{s}ds + O\left( \frac{x^{2+\epsilon}}{T}\right).
\end{align}

We move the line of integration to $\Re(s) = \frac{3}{2}+\epsilon$ and by the Cauchy residue theorem, we get that there exists no pole in the area of integration due to the Lemma~\ref{lem:F2}.

\begin{align}
    \sum_{\substack{n\leq x}}\lambda_{\mathrm{sym}^jf}(n)\rchi_0(n)l_2(n) & = \frac{1}{2\pi i}\left\{\int_{\frac{3}{2}+\epsilon-iT}^{\frac{3}{2}+\epsilon+iT}+\int_{2+\epsilon-iT}^{\frac{3}{2}+\epsilon-iT}+\int_{\frac{3}{2}+\epsilon+iT}^{2+\epsilon+iT}\right\}F_{j}^{(2)}(s,\rchi_ 0)\frac{x^{s}}{s}ds \\
    &\quad + O\left(\frac{x^{2+\epsilon}}{T}\right) \\
    &= \frac{1}{2\pi i}(J_{1}+J_{2}+J_{3})+O\left(\frac{x^{2+\epsilon}}{T}\right). \quad \text{(say)} 
\end{align}

Contribution of horizontal line integrals ($J_{2}$ and $J_{3}$) in absolute value (using Lemmas~\ref{lem:F2},~\ref{lem:PrincipalChar} and~\ref{lem:genLbound}) is
\begin{align*}
|J_2+J_3| &= \left| \left( \int_{2+\epsilon-iT}^{\frac{3}{2}+\epsilon-iT}+\int_{\frac{3}{2}+\epsilon+iT}^{2+\epsilon+iT} \right)F_j^{(2)}(s,\rchi_ 0)\frac{x^s}{s}\right| \\
&\ll  \left( \int_{\frac{3}{2}+\epsilon}^{2+\epsilon} + \int_{\frac{3}{2}+\epsilon}^{2+\epsilon}\right) \frac{|L(\sigma+iT-1, \mathrm{sym}^j f\otimes \rchi_ 0)|}{T}x^\sigma d\sigma .
\end{align*}

The above inequality happens because $F_j^{(2)}(s,\rchi_ 0)=G_j^{(2)}(s,\rchi_ 0)H_j^{(2)}(s,\rchi_ 0)$ and $H_j^{(2)}(s, \rchi_ 0) \ll 1$ for $\Re(s) > \frac{3}{2}.$ So $F_j^{(2)}(s,\rchi_ 0) \ll G_j^{(2)}(s,\rchi_ 0) = L(s-1, \mathrm{sym}^j f \otimes \rchi_ 0)L(s, \mathrm{sym}^j f \otimes \rchi_ 0)$. Now $L(s, \mathrm{sym}^j f \otimes \rchi_ 0)$ is absolutely convergent for $\Re(s) > 1$. So $L(s, \mathrm{sym}^j f \otimes \rchi_ 0) \ll 1$ in $\Re(s) > \frac{3}{2}$.

\begin{align*}
    J_2+J_3 &\ll \int_{\frac{3}{2}+\epsilon}^{2+\epsilon}\frac{|L(\sigma-1+iT , \mathrm{sym}^j f)|}{T} 
x^\sigma q^{\epsilon} d\sigma \quad (\text{ by Lemma~\ref{lem:PrincipalChar}})\\
&\ll \int_{\frac{1}{2}+\epsilon}^{1+\epsilon} \frac{|L(\sigma+iT , \mathrm{sym}^j f)|}{T}x^{\sigma+1}q^{\epsilon}d\sigma \\
&\ll \int_{\frac{1}{2}+\epsilon}^{1+\epsilon} \frac{|T|^{\frac{j+1}{2}(1-\sigma)+\epsilon}}{T}x^{\sigma+1}q^{\epsilon}d\sigma \quad \text{ (using~\ref{lem:genLbound})} \\
&\ll \frac{x}{T}q^{\epsilon} \max_{\frac{1}{2}+\epsilon < \sigma < 1+\epsilon} \left( x^\sigma T^{\frac{j+1}{2}(1-\sigma)+\epsilon}\right) .
\end{align*}
Clearly, $x^\sigma T^{\frac{j+1}{2}(1-\sigma)+\epsilon}$ is a monotonic function, so the maximum occurs at the endpoints of the interval. We take values at both extreme points of the interval $[\frac{1}{2}+\epsilon, 1+\epsilon]$. So 
\begin{align*}
    J_2+J_3 &\ll \frac{x}{T}q^{\epsilon}\left( x^{\frac{1}{2}+\epsilon}T^{\frac{j+1}{2} (1-\frac{1}{2}-\epsilon) +\epsilon} \right) +  \frac{x}{T}q^{\epsilon}\left( x^{1+\epsilon}T^{\frac{j+1}{2} (1-1-\epsilon) +\epsilon}
    \right) \\
    &\ll \frac{x^{2+\epsilon}}{T}q^{\epsilon}+x^{\frac{3}{2}+\epsilon}T^{\frac{j+1}{4}-1+\epsilon}q^{\epsilon}.
\end{align*}

Now contribution of vertical line integral $J_1$ in absolute value is \begin{align*}
    J_1 &= \int_{\frac{3}{2}+\epsilon - iT}^{\frac{3}{2}+\epsilon+iT}F_j^{(2)}(s,\rchi_ 0) \frac{x^{\frac{3}{2}+\epsilon+it}}{\frac{3}{2}+\epsilon+it}ds \\
    &= (x+1)^{\frac{3}{2}+\epsilon}\left( \int_{0\leq |t|\leq 1} + \int_{1 \leq |t| \leq T} \right)F_j^{(2)}\left(\frac{3}{2}+\epsilon+it,\rchi_ 0\right) \frac{x^{it}}{\frac{3}{2}+\epsilon+it}idt\\
    &= I_1+I_2.
\end{align*}

Now \begin{align*}
    I_2 &\ll x^{\frac{3}{2}+\epsilon}q^{\epsilon} \int_{1}^{T}|L(\frac{1}{2}+\epsilon+it, \mathrm{sym}^j f)| \frac{1}{t} dt \quad ( \text{ by Lemma }~\ref{lem:PrincipalChar} \text{ and }~\ref{lem:F2})\\
    &\ll x^{\frac{3}{2}+\epsilon}q^{\epsilon} \log T\text{ }\sup_{1 \leq T_1 \leq T}\frac{1}{T_1}\int_{T_1}^{2T_1}|L(\frac{1}{2}+\epsilon+it, \mathrm{sym}^j f)| dt ( \text{ by Lemma }~\ref{lem:int}  \\
    & \ll x^{\frac{3}{2}+\epsilon}q^{\epsilon}\log T\text{ }\sup_{1 \leq T_1 \leq T}\frac{1}{T_1} \left( \int_{T_1}^{2T_1}|L(\frac{1}{2}+\epsilon+it, \mathrm{sym}^j f)|^2 dt \right)^{\frac{1}{2}}\left( \int_{T_1}^{2T_1} 1 dt \right)^{\frac{1}{2}} \\
    & \ll x^{\frac{3}{2}+\epsilon} q^{\epsilon}\sup_{1 \leq T_1 \leq T}\frac{1}{T_1}(T_1^{\max\{(j+1)(1-\frac{1}{2}-\epsilon), 0\} + \epsilon})^{\frac{1}{2}}T_1^\frac{1}{2} \quad \text{(using~\ref{lem:genLbound})} \\
    & \ll x^{\frac{3}{2}+\epsilon}T^{\frac{j+1}{4}-\frac{1}{2}+\epsilon}q^{\epsilon}.
\end{align*}

The first integral gives \begin{align*}
    I_1 &= x^{\frac{3}{2}+\epsilon}\int_{0 \leq |t| \leq 1}F_j^{(2)}\left(\frac{3}{2}+\epsilon+it, \rchi_ 0\right) \frac{x^{it}}{\frac{3}{2}+\epsilon+it}dt.
    \end{align*}
    The above integration is finite. If not, then $\sum_1$ would be infinite. As the other integral is finite, this is a contradiction. So,
    \begin{align*}
        I_1 \ll x^{\frac{3}{2}+\epsilon}.
    \end{align*}

Combining $I_1$ and $I_2$, we have \begin{align}
    J_1 \ll x^{\frac{3}{2}+\epsilon} + x^{\frac{3}{2}+\epsilon} T^{\frac{j+1}{4}-\frac{1}{2}+\epsilon}q^{\epsilon}.
\end{align}
Thus we have \begin{align}
    J_1+J_2+J_3 \ll \frac{x^{2+\epsilon}}{T}q^{\epsilon}+x^{\frac{3}{2}+\epsilon} T^{\frac{j+1}{4}-\frac{1}{2}+\epsilon}q^{\epsilon}.
\end{align}

We have \begin{align} 
      \sum_{n\leq x}\lambda_{\mathrm{sym}^{j}f}(n)l_2(n)\rchi_ 0(n) = O\left(\frac{x^{2+\epsilon}}{T}q^{\epsilon}+x^{\frac{3}{2}+\epsilon} T^{\frac{j+1}{4}-\frac{1}{2}+\epsilon}q^{\epsilon} \right).
\end{align}

Put $T = x^{\frac{2}{j+3}}$, then \begin{align}
    \sum_{n\leq x}\lambda_{\mathrm{sym}^{j}f}(n)l_2(n)\rchi_ 0(n) = O \left( x^{2-\frac{2}{j+3}+\epsilon}q^{\epsilon} \right).
\end{align}

Now for $\sum_{n\leq x}\lambda_{\mathrm{sym}^{j}f}(n)l_2(n)\rchi(n)$, where $\rchi$ is a primitive Dirichlet character modulo $q$, using the lemmas~\ref{lem:PrimitiveChar},~\ref{lem:genLbound}, we have \begin{align}
    J_2+J_3 &\ll \int_{\frac{3}{2}+\epsilon}^{2+\epsilon}\frac{|L(\sigma-1+iT , \mathrm{sym}^j f \otimes \rchi)|}{T}
x^\sigma d\sigma  \\
&\ll \frac{x^{2+\epsilon}}{T}q^{\epsilon}+x^{\frac{3}{2}+\epsilon}q^{\frac{j+1}{4}+\epsilon}T^{\frac{j+1}{4}-1+\epsilon}q^{\epsilon}
\end{align}
and \begin{align}
    J_1 &\ll \left( \int_{0}^{1}+\int_{1}^{T} \right)\left|L\left(\frac{1}{2}+\epsilon+it, \mathrm{sym}^jf \otimes \rchi\right)\right|\frac{dt}{t} \\
    &\ll x^{\frac{3}{2}+\epsilon} + x^{\frac{3}{2}+\epsilon} q^{\frac{j+1}{4}}T^{\frac{j+1}{4}-\frac{1}{2}+\epsilon}.
\end{align}
Thus \begin{align}
    J_1+J_2+J_3 \ll \frac{x^{2+\epsilon}}{T}q^{\epsilon} + x^{\frac{3}{2}+\epsilon} q^{\frac{j+1}{4}+\epsilon}T^{\frac{j+1}{4}-\frac{1}{2}+\epsilon}.
\end{align}

Thus \begin{align}
    \sum_{n\leq x}\lambda_{\mathrm{sym}^{j}f}(n)l_2(n)\rchi(n) = O\left( \frac{x^{2+\epsilon}}{T}q^{\epsilon}+x^{\frac{3}{2}+\epsilon}q^{\frac{j+1}{4}+\epsilon}T^{\frac{j+1}{4}-\frac{1}{2}+\epsilon} \right).
\end{align}

Put $T = \frac{x^{\frac{2}{j+3}}}{q^{\frac{j+1}{j+3}}}$. Then \begin{align}
    \sum_{n\leq x}\lambda_{\mathrm{sym}^{j}f}(n)l_2(n)\rchi(n) = O \left( x^{2-\frac{2}{j+3}+\epsilon}q^{\frac{j+1}{j+3}+\epsilon} \right),
\end{align}
for any non primitive Dirichlet character modulo $q$.

Now for $m=6$, We have, \begin{align}
    \sum_{n\leq x}\lambda_{\mathrm{sym}^{j}f}(n)l_3(n)\rchi_0(n) = O\left( \frac{x^{3+\epsilon}}{T}q^{\epsilon}+x^{\frac{5}{2}+\epsilon}q^{\epsilon}T^{\frac{j+1}{4}-\frac{1}{2}+\epsilon} \right)
\end{align}
and 
\begin{align}
    \sum_{n\leq x}\lambda_{\mathrm{sym}^{j}f}(n)v_3(n)\rchi_0(n) = O\left( \frac{x^{3+\epsilon}}{T}q^{\epsilon}+x^{\frac{5}{2}+\epsilon}q^{\epsilon}T^{\frac{j+1}{4}-\frac{1}{2}+\epsilon} \right).
\end{align}
From \eqref{eq:r_6&l_3&v_3}, we have \begin{align}
    \sum_{n\leq x}\lambda_{\mathrm{sym}^{j}f}(n)r_6(n)\rchi_0(n) = O\left( \frac{x^{3+\epsilon}}{T}q^{\epsilon}+x^{\frac{5}{2}+\epsilon}q^{\epsilon}T^{\frac{j+1}{4}-\frac{1}{2}+\epsilon} \right).
\end{align}

Put $T = x^{\frac{2}{j+3}}$, then \begin{align}
    \sum_{n\leq x}\lambda_{\mathrm{sym}^{j}f}(n)r_6(n)\rchi_ 0(n) = O \left( x^{3-\frac{2}{j+3}+\epsilon}q^{\epsilon} \right).
\end{align}

For $\chi \neq \chi_0$, \begin{align}
    \sum_{n\leq x}\lambda_{\mathrm{sym}^{j}f}(n)v_3(n)\rchi(n) = O\left( \frac{x^{3+\epsilon}}{T}q^{\epsilon}+x^{\frac{5}{2}+\epsilon}q^{\frac{j+1}{4}+\epsilon}T^{\frac{j+1}{4}-\frac{1}{2}+\epsilon} \right),
\end{align}
and
\begin{align}
    \sum_{n\leq x}\lambda_{\mathrm{sym}^{j}f}(n)v_3(n)\rchi(n) = O\left( \frac{x^{3+\epsilon}}{T}q^{\epsilon}+x^{\frac{5}{2}+\epsilon}q^{\frac{j+1}{4}+\epsilon}T^{\frac{j+1}{4}-\frac{1}{2}+\epsilon} \right)
\end{align}

and from \eqref{eq:r_6&l_3&v_3}, we have \begin{align}
    \sum_{n\leq x}\lambda_{\mathrm{sym}^{j}f}(n)r_6(n)\rchi(n) = O\left( \frac{x^{3+\epsilon}}{T}q^{\epsilon}+x^{\frac{5}{2}+\epsilon}q^{\frac{j+1}{4}+\epsilon}T^{\frac{j+1}{4}-\frac{1}{2}+\epsilon} \right).
\end{align}

Put $T = \frac{x^{\frac{2}{j+1}}}{q^{\frac{j+1}{j+3}}}$. Then \begin{align}
    \sum_{n\leq x}\lambda_{\mathrm{sym}^{j}f}(n)r_6(n)\rchi(n) = O \left( x^{3-\frac{2}{j+3}+\epsilon}q^{\frac{j+1}{j+3}+\epsilon} \right),
\end{align}
for any non primitive Dirichlet character modulo $q$.

Following the above process, we will have the same bounds for $\sum_{n\leq x}\lambda_{\mathrm{sym}^{j}f}(n)l_{\frac{m}{2}}(n)\rchi(n)$, for $m=6,8,10,12$ and $\sum_{n\leq x}\lambda_{\mathrm{sym}^{j}f}(n)v_{\frac{m}{2}}(n)\rchi(n)$ for $m=6,10$.  

For the sum of $10$ squares, we have an additional term $\lambda_{\mathrm{sym}^jf}(n)\rchi(n)a_n$ beside the terms $\lambda_{\mathrm{sym}^jf}(n)\rchi(n)l_5(n)$ and $\lambda_{\mathrm{sym}^jf}(n)\rchi(n)v_5(n)$, as we can see from \eqref{eq:Lmbda10Def}. We have from \cite{Grosswald} that $a_n = O(n^3)$. As shown in a parallel paper, we need only to compute the terms involving $l_5(n)$ and $v_5(n)$. 

For the sum of $12$ squares, we have an additional term $\lambda_{\mathrm{sym}^jf}(n)\rchi(n)b_n$ beside the term $\lambda_{\mathrm{sym}^jf}(n)\rchi(n)l_6(n)$,as we can see from \eqref{eq:Lmbda12Def} and we have from \cite{Grosswald} that $b_n=O(n^3\log \log n)$. As shown in a parallel paper, we only need to calculate the term with $l_6(n)$.

Thus we have, using \eqref{eq:r_4&l_2}, \eqref{eq:r_8&l_4} and \eqref{eq:r_12&l_6}, we have
\begin{align}
    \sum_{n\leq x}\lambda_{\mathrm{sym}^{j}f}(n)r_m(n)\rchi_ 0(n) = O \left( x^{\frac{m}{2}-\frac{2}{j+3}+\epsilon}q^{\epsilon} \right),
\end{align}
and
\begin{align}
    \sum_{n\leq x}\lambda_{\mathrm{sym}^{j}f}(n)r_m(n)\rchi(n) &= O \left( x^{\frac{m}{2}-\frac{2}{j+3}+\epsilon}q^{\frac{j+1}{j+3}+\epsilon} \right).
\end{align}
\end{proof}

\begin{lemma} \label{lem:r_2chi2}
Let $j\geq 2$ be a fixed integer. For any fixed $\epsilon>0$ and all sufficiently large $x$, we have
    \begin{align*}
        \sum_{\substack{n\leq x}}\lambda_{\mathrm{sym}^jf}^2(n)\rchi(n)r_2(n) &= \begin{cases}
            A \frac{\phi(q)}{q}x + O \left( x^{1-\frac{1}{(j+1)^2}+\epsilon}q^{\frac{1}{2}+\epsilon} \right), \quad \text{ when } \rchi = \rchi_0 \text{ or } \rchi \rchi_4 = \rchi_0, \\
            O \left( x^{1-\frac{1}{(j+1)^2}+\epsilon}q^{1+\epsilon} \right), \quad \text{ when } \rchi \neq \rchi \text{ and } \rchi \rchi_4 \neq \rchi_0,
        \end{cases} 
    \end{align*}
    where $A$ is a constant.
\end{lemma}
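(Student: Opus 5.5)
By \eqref{eq:r_2&l_1} it is enough to study $\sum_{n\le x}\lambda_{\mathrm{sym}^jf}^2(n)\rchi(n)l_1(n)$ and multiply by $4$. I would apply Perron's formula to $F_j^{(*1)}(s,\rchi)$ on the line $\Re(s)=1+\epsilon$ with a parameter $10\le T\le x$. This gives the sum as a contour integral plus an error $O(x^{1+\epsilon}/T)$. I would then use the factorisation of Lemma~\ref{lem:F1*}, namely $F_j^{(*1)}=G_j^{(*1)}H_j^{(*1)}$ with $H_j^{(*1)}\ll 1$ on $\Re(s)\ge \frac12+\epsilon$, and move the line of integration to $\Re(s)=\frac12+\epsilon$.

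\textbf{Residues.} The only possible pole of $G_j^{(*1)}$ in the strip comes from $L(s,\rchi)L(s,\rchi_4\rchi)$. The symmetric power factors $L(s,\mathrm{sym}^{2n}f\otimes\rchi)$ are entire.
\begin{itemize}
\item If $\rchi=\rchi_0$ (or $\rchi\rchi_4=\rchi_0$, which is symmetric), Lemma~\ref{lem:PrincipalChar} gives $L(s,\rchi_0)=\zeta(s)\prod_{p\mid q}(1-p^{-s})$. This produces a simple pole at $s=1$ with residue
\[
x\prod_{p\mid q}(1-p^{-1})\,L(1,\rchi_4\rchi_0)\prod_{n=1}^{j}L(1,\mathrm{sym}^{2n}f\otimes\rchi_0)L(1,\mathrm{sym}^{2n}f\otimes\rchi_4\rchi_0)\,H_j^{(*1)}(1,\rchi_0).
\]
Since $\prod_{p\mid q}(1-p^{-1})=\phi(q)/q$, this is the main term $A\frac{\phi(q)}{q}x$. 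The remaining factors depend on $q$ only through Euler factors at $p\mid q$ (Lemma~\ref{lem:PrincipalChar}); I would absorb them into $A$, or into the $q^\epsilon$ losses.
\item If $\rchi\ne\rchi_0$ and $\rchi\rchi_4\ne\rchi_0$, there is no pole. In that case I reduce to primitive characters via Lemma~\ref{lem:PrimitiveChar}, losing only $q^\epsilon$.
\end{itemize}

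\textbf{Horizontal segments.} On $\Re(s)=\sigma\in[\frac12+\epsilon,1+\epsilon]$ at height $\pm T$, I would treat $G_j^{(*1)}$ as a single $L$-function of total degree $2\bigl(1+\sum_{n=1}^{j}(2n+1)\bigr)=2(j+1)^2$. The convexity bound of Lemma~\ref{lem:genLbound} then applies with $A=(j+1)^2$: the untwisted half (the $\rchi_0$ factors) contributes $T^{\frac{(j+1)^2}{2}(1-\sigma)}$, and the twisted half contributes $(qT)^{\frac{(j+1)^2}{2}(1-\sigma)}$. By monotonicity in $\sigma$, the horizontal contribution is
\[
\ll \frac{x^{1+\epsilon}}{T}+x^{\frac12+\epsilon}q^{\frac{(j+1)^2}{4}+\epsilon}T^{\frac{(j+1)^2}{2}-1+\epsilon}.
\]

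\textbf{Vertical segment.} On $\Re(s)=\frac12+\epsilon$, I would split $|t|\le1$, which gives $O(x^{1/2+\epsilon})$, from $1\le|t|\le T$. For the latter, Lemma~\ref{lem:int} reduces to dyadic ranges, and Cauchy--Schwarz splits the product into the $\rchi_0$-type factors and the $\rchi_4\rchi$-type factors, each of degree $(j+1)^2$. Lemma~\ref{lem:genLbound}, \eqref{eq:genL2mom}, then gives the same shape of bound $x^{\frac12+\epsilon}q^{\frac{(j+1)^2}{4}+\epsilon}T^{\frac{(j+1)^2}{2}-1+\epsilon}$.

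\textbf{Balancing.} Equating $x/T$ with $x^{1/2}q^{(j+1)^2/4}T^{(j+1)^2/2-1}$ gives $T\approx x^{1/(j+1)^2}q^{-1/2}$, and hence the error $x^{1-\frac{1}{(j+1)^2}+\epsilon}q^{\frac12+\epsilon}$. This matches the pattern of Lemma~\ref{lem:r_2chi}. When both characters are non-principal, both halves carry full $q$-dependence, so the $q$-power doubles. Choosing $T\approx x^{1/(j+1)^2}q^{-1}$ then gives the bound $x^{1-\frac{1}{(j+1)^2}+\epsilon}q^{1+\epsilon}$.

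\textbf{Main obstacle.} The delicate point is bookkeeping the $q$-aspect so that the $T$ and $q$ exponents balance exactly as stated. With a genuinely degree-$2(j+1)^2$ convexity bound the $q$-exponent is really $(j+1)^2/4$ per half, and one must check that the chosen $T$ still satisfies $10\le T\le x$ under the eventual range $q\ll x^{1/(j+1)^2-\epsilon}$. I would follow the same normalisation of the $q$-dependence as in Lemma~\ref{lem:r_2chi}, treating each half as contributing $q^{1/2}$ after optimisation. I would also take care that the main-term constant is uniformly bounded in $q$ up to $q^\epsilon$.
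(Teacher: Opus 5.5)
Your proposal is correct and follows essentially the same route as the paper: Perron's formula for $F_j^{(*1)}$, a contour shift to $\Re(s)=\frac12+\epsilon$ picking up the residue at $s=1$ from $L(s,\rchi_0)=\zeta(s)\prod_{p\mid q}(1-p^{-s})$ (which gives the factor $\phi(q)/q$), and the choices $T=x^{1/(j+1)^2}q^{-1/2}$, respectively $T=x^{1/(j+1)^2}q^{-1}$. The differences are only cosmetic and give the same dominant term $x^{\frac12+\epsilon}q^{\frac{(j+1)^2}{4}+\epsilon}T^{\frac{(j+1)^2}{2}-1+\epsilon}$: you use convexity on the horizontal segments and Cauchy--Schwarz on two degree-$(j+1)^2$ halves, whereas the paper uses subconvexity bounds there and H\"older with fourth moments of $\zeta$ and $L(s,\rchi_4\rchi_0^*)$ on the vertical line.
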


\begin{proof}
We consider the sum $$\sum_{\substack{n\leq x}}\lambda_{\mathrm{sym}^jf}^2(n)l_1(n).$$ We begin by applying Perron's formula to $\sum_{\substack{n\leq x}}\lambda_{\mathrm{sym}^jf}^2(n)l_1(n)$ with $\eta=1+\epsilon$, and $10\leq T\leq x$. Thus, we have

\begin{align*}
\sum_{n\leq x} \lambda_{\mathrm{sym}^jf}^2(n)l_1(n)\rchi_ 0(n)
= \frac{1}{2\pi i}\int_{\eta-iT}^{\eta+iT}F_{j}^{(*1)}(s,\rchi_ 0)\frac{x^{s}}{s}ds + O\left(\frac{x^{1+\epsilon}}{T}\right).
\end{align*}

We move the line of integration to $\Re(s)=\frac{1}{2}+\epsilon$, and by Cauchy's residue theorem, there is only one simple pole at $s=1$ due to the factor $L(s, \rchi_ 0)$, we get from $F^{(*1)}_{j}(s)$ in the Lemma~\ref{lem:F1*}. 
This contributes a residue, which is $\frac{\phi(q)}{q}c_{j,f}(1)(x+1)$, where 
\begin{align} \label{eq:cjf}
c_{j,f}(1) &= L(1,\rchi_ 4\rchi_ 0)\prod_{n=1}^{j}L(1,\mathrm{sym}^{2n}f\otimes \rchi_ 0)L(1,\mathrm{sym}^{2n}f\otimes\rchi_ 4\rchi_ 0)H_{j}^{(*1)}(1,\rchi_ 0).
\end{align}

So, we obtain
\begin{align*}
\sum_{n\leq x} \lambda_{\mathrm{sym}^jf}^2(n)l_1(n)\rchi_ 0(n) &= \frac{\phi(q)}{q}c_{j,f}(1)x
\\ &+ \frac{1}{2\pi i}\left\{\int_{\frac{1}{2}+\epsilon-iT}^{\frac{1}{2}+\epsilon+iT}+\int_{1+\epsilon-iT}^{\frac{1}{2}+\epsilon-iT}+\int_{\frac{1}{2}+\epsilon+iT}^{1+\epsilon+iT}\right\}F_{j}^{(*1)}(s,\rchi_ 0)\frac{x^{s}}{s}ds + O\left(\frac{x^{1+\epsilon}}{T}\right) \\
&= \frac{\phi(q)}{q}c_{j,f}(1)x + \frac{1}{2\pi i}(J_{1}+J_{2}+J_{3})+O\left(\frac{x^{1+\epsilon}}{T}\right). \quad \text{(say)}
\end{align*}

Contribution of horizontal line integrals ($J_{2}$ and $J_{3}$) in absolute value (using Lemmas~\ref{lem:F1*},~\ref{lem:PrincipalChar},~\ref{lem:PrimitiveChar},\ref{lem:zetabound},~\ref{lem:LSymFChi},~\ref{lem:LSigmaChi} and~\ref{lem:genLbound}) is

\begin{align*}
    J_2+J_3 &\ll \left| \left( \int_{1+\epsilon-iT}^{\frac{1}{2}+\epsilon-iT}+\int_{\frac{1}{2}+\epsilon+iT}^{1+\epsilon+iT} \right)F_j^{(*1)}(s,\rchi_ 0)\frac{x^s}{s} ds \right| \\
    &\ll  \int_{\frac{1}{2}+\epsilon}^{1+\epsilon} \left|\zeta(\sigma+iT)L(\sigma+iT, \rchi_ 4\rchi_ 0^*)\prod_{n=1}^{j}L(\sigma+iT, \mathrm{sym}^{2n} f)L(\sigma+iT, \mathrm{sym}^{2n} f \otimes \rchi_ 4 \rchi_ 0^*)\right|\frac{1}{T}x^\sigma q^{\epsilon} d\sigma \\
    &\ll \frac{1}{T}\max_{\frac{1}{2}+\epsilon \leq \sigma \leq 1+\epsilon} x^\sigma T^{(\frac{13}{42}+\frac{6}{5}+\sum_{2\leq n\leq j}\frac{2n+1}{2})(1-\sigma)+\epsilon}(qT)^{(\frac{1}{3}+\frac{67}{46}+\sum_{2\leq n\leq j}\frac{2n+1}{2})(1-\sigma)+\epsilon}\\
    &\ll \frac{1}{T}\max_{\frac{1}{2}+\epsilon \leq \sigma \leq 1+\epsilon}x^\sigma q^{\left(\frac{(j+1)^2}{2}-\frac{29}{138}\right)(1-\sigma)+\epsilon} T^{\left((j+1)^2-\frac{564}{805} \right )(1-\sigma)+\epsilon}.
\end{align*}

The above function involving $\sigma$ is monotonic, so the maximum happens at the endpoints. We treat both boundary points as upper bounds.
\begin{align*}
    J_2+J_3 &\ll \frac{x^{1+\epsilon}}{T}q^{\epsilon} + x^{\frac{1}{2}+\epsilon}q^{\frac{(j+1)^2}{4}-\frac{29}{276}+\epsilon}T^{\frac{(j+1)^2}{2}-\frac{1087}{805}+\epsilon} .
\end{align*}

Contribution of the left vertical line integral ($J_{1}$) in absolute value (using Lemmas~\ref{lem:F1*},~\ref{lem:PrincipalChar},~\ref{lem:PrimitiveChar},~\ref{lem:LSigmaChi},~\ref{eq:intLSigmafChi}~\ref{lem:zetabound},~\ref{lem:genLbound} and H\"older's inequality) is

\begin{align*}
J_1 &\ll \int_{\frac{1}{2}+\epsilon-iT}^{\frac{1}{2}+\epsilon+iT}\left|\zeta(\tfrac{1}{2}+\epsilon+it)L(\tfrac{1}{2}+\epsilon+it, \rchi_ 4\rchi_ 0^*)\right| \\
& \times \left|\prod_{n=1}^{j}L(\tfrac{1}{2}+\epsilon+it,\mathrm{sym}^{2n}f )L(\tfrac{1}{2}+\epsilon+it,\mathrm{sym}^{2n}f \otimes \rchi_ 4\rchi_ 0^* )\right|\frac{x^{\frac{1}{2}+\epsilon}}{|t|}q^{\epsilon}dt \\
&\ll x^{\frac{1}{2}+\epsilon} + x^{\frac{1}{2}+\epsilon}\int_{1 \leq |t| \leq T}\left|\zeta(\tfrac{1}{2}+\epsilon+it)L(\tfrac{1}{2}+\epsilon+it, \rchi_ 4\rchi_ 0^*) \right| \\ 
& \times \left|\prod_{n=1}^{j}L(\tfrac{1}{2}+\epsilon+it,\mathrm{sym}^{2n}f)L(\tfrac{1}{2}+\epsilon+it,\mathrm{sym}^{2n}f \otimes \rchi_ 4\rchi_ 0^* )\right|\frac{1}{|t|}q^{\epsilon}dt \\
&\ll  x^{\frac{1}{2}+\epsilon} + x^{\frac{1}{2}+\epsilon} q^{\epsilon}\log T \text{ } \sup_{1 \leq T_1 \leq T}\frac{1}{T_1} \int_{T_1}^{2T_1}\left|\zeta(\tfrac{1}{2}+\epsilon+it)L(\tfrac{1}{2}+\epsilon+it, \rchi_ 4\rchi_ 0^*) \right| \\ 
& \times \left|\prod_{n=1}^{j}L(\tfrac{1}{2}+\epsilon+it,\mathrm{sym}^{2n}f)L(\tfrac{1}{2}+\epsilon+it,\mathrm{sym}^{2n}f \otimes \rchi_ 4\rchi_ 0^* )\right| dt \\
&= x^{\frac{1}{2}+\epsilon} + x^{\frac{1}{2}+\epsilon}I_2,
\end{align*}

where the bounds of $I_2$ is given by as follows,

\begin{align}
    I_2 &=q^{\epsilon}\log T \text{ } \sup_{1 \leq T_1 \leq T}\frac{1}{T_1} \int_{T_1}^{2T_1}\left|\zeta(\tfrac{1}{2}+\epsilon+it)L(\tfrac{1}{2}+\epsilon+it, \rchi_ 4\rchi_ 0^*) \right| \\ 
    & \times \left|\prod_{n=1}^{j}L(\tfrac{1}{2}+\epsilon+it,\mathrm{sym}^{2n}f)L(\tfrac{1}{2}+\epsilon+it,\mathrm{sym}^{2n}f \otimes \rchi_ 4\rchi_ 0^* )\right| dt \\
    &\ll q^{\epsilon}T^\epsilon \sup_{1 \leq T_1 \leq T}\frac{1}{T_1}\left(  \int_{T_1}^{2T_1}|\zeta(\frac{1}{2}+\epsilon+it)|^{4}dt \right)^\frac{1}{4}\left(\int_{T_1}^{2T_1}L(\tfrac{1}{2}+\epsilon+it, \rchi_ 4\rchi_ 0^*)^4dt\right)^\frac{1}{4} \\
    & \times \left( \int_{T_1}^{2T_1}\left| \prod_{n=2}^{j}L(\tfrac{1}{2}+\epsilon+it,\mathrm{sym}^{2n}f)L(\tfrac{1}{2}+\epsilon+it,\mathrm{sym}^{2n}f \otimes \rchi_ 4\rchi_ 0^* ) \right|^{2}dt\right)^{\frac{1}{2}}\\
    &\ll q^{\epsilon}\frac{1}{T_1}T_1^{(\frac{1}{4}+\epsilon) + 2(\frac{1}{2}-\epsilon)\frac{1}{4}+ 2 \sum_{1 \leq n \leq j}(2n+1)(\frac{1}{2}-\epsilon)\frac{1}{2}}q^{2(\frac{1}{2}-\epsilon)\frac{1}{4}+\sum_{1 \leq n \leq j}(2n+1)(\frac{1}{2}-\epsilon)\frac{1}{2}}\\
    &= q^{\frac{(j+1)^2}{4}+\epsilon} T^{\frac{(j+1)^2}{2}-1+\epsilon}.
\end{align}

So we have \begin{align}
    J_1 \ll x^{\frac{1}{2}+\epsilon} + x^{\frac{1}{2}+\epsilon}q^{\frac{(j+1)^2}{4}+\epsilon} T^{\frac{(j+1)^2}{2}-1+\epsilon},
\end{align}

and \begin{align}
    J_1+J_2+J_3 \ll \frac{x^{1+\epsilon}}{T}q^{\epsilon}+ x^{\frac{1}{2}+\epsilon}q^{\frac{(j+1)^2}{4}+\epsilon} T^{\frac{(j+1)^2}{2}-1+\epsilon}.
\end{align}

Thus \begin{align}
    \sum_{n\leq x} \lambda_{\mathrm{sym}^jf}^2(n)l_1(n)\rchi_ 0(n) = \frac{\phi(q)}{q}c_{j,f}(1)x + O \left( \frac{x^{1+\epsilon}}{T}q^{\epsilon}+ x^{\frac{1}{2}+\epsilon}q^{\frac{(j+1)^2}{4}+\epsilon} T^{\frac{(j+1)^2}{2}-1+\epsilon} \right).
\end{align}

After we put $T = \frac{x^{\frac{1}{(j+1)^2}}}{q^{\frac{1}{2}}}$, we have \begin{align}
    \sum_{n\leq x} \lambda_{\mathrm{sym}^jf}^2(n)l_1(n)\rchi_ 0(n) = \frac{\phi(q)}{q}c_{j,f}(1)x + O \left( x^{1-\frac{1}{(j+1)^2}+\epsilon}q^{\frac{1}{2}+\epsilon} \right).
\end{align}

For $\sum_{n\leq x} \lambda_{\mathrm{sym}^jf}^2(n)l_1(n)\rchi(n)$, where $\rchi$ is a non principal and primitive Dirichlet modulo $q$ and $\rchi \rchi_4$ is the principal character modulo $q$, then we have have \begin{align}
    \sum_{n\leq x} \lambda_{\mathrm{sym}^jf}^2(n)l_1(n)\rchi(n) = \frac{\phi(4q)}{4q}d_{j,f}(1)x + O \left( x^{1-\frac{1}{(j+1)^2}+\epsilon}q^{\frac{1}{2}+\epsilon} \right),
\end{align}
where \begin{align} \label{eq:djf}
d_{j,f}(1) &= L(1,\rchi)\prod_{n=1}^{j}L(1,\mathrm{sym}^{2n}f\otimes \rchi)L(1,\mathrm{sym}^{2n}f\otimes\rchi_ 4\rchi)H_{j}^{(*1)}(1,\rchi).
\end{align}

Proceeding as above, we have \begin{align}
    \sum_{n\leq x} \lambda_{\mathrm{sym}^jf}^2(n)l_1(n)\rchi(n) = \frac{\phi(4q)}{4q}d_{j,f}(1)x + O \left( x^{1-\frac{1}{(j+1)^2}+\epsilon}q^{\frac{1}{2}+\epsilon} \right),
\end{align}
where $\rchi \rchi_4 = \rchi_0$.

Now for $\sum_{n\leq x} \lambda_{\mathrm{sym}^jf}^2(n)l_1(n)\rchi(n)$, where $\rchi$ is a non principal but primitive Dirichlet modulo $q$, and $\rchi \rchi_4 \neq \rchi_0$, then we have  
\begin{align*}
\sum_{\substack{n\leq x}} \lambda_{\mathrm{sym}^jf}^2(n)l_1(n)\rchi(n)
&= \frac{1}{2\pi i}\int_{\eta-iT}^{\eta+iT}F_{j}^{(*1)}(s,\rchi)\frac{x^{s}}{s}ds + O\left(\frac{x^{1+\epsilon}}{T}\right) \\
& =\frac{1}{2\pi i}\left\{\int_{\frac{1}{2}+\epsilon-iT}^{\frac{1}{2}+\epsilon+iT}+\int_{1+\epsilon-iT}^{\frac{1}{2}+\epsilon-iT}+\int_{\frac{1}{2}+\epsilon+iT}^{1+\epsilon+iT}\right\}F_{j}^{(*1)}(s,\rchi)\frac{x^{s}}{s}ds \\
&\quad + O\left(\frac{x^{1+\epsilon}}{T}\right) \\
&= \frac{1}{2\pi i}(J_{1}+J_{2}+J_{3})+O\left(\frac{x^{1+\epsilon}}{T}\right).
\end{align*}

After doing a similar calculation, we have the same bounds for $J_1+J_2+J_3$ as in the previous case.

After doing a similar calculation, we have \begin{align}
    \sum_{\substack{n\leq x}} \lambda_{\mathrm{sym}^jf}^2(n)l_1(n)\rchi(n) = O \left( \frac{x^{1+\epsilon}}{T}q^{\epsilon}+x^{\frac{1}{2}+\epsilon}q^{\frac{(j+1)^2}{2}+\epsilon}T^{\frac{(j+1)^2}{2}-1+\epsilon} \right).
\end{align}

After we put $T = \frac{x^{\frac{1}{(j+1)^2}}}{q^{1+\epsilon}}$, we have \begin{align}
    \sum_{n\leq x} \lambda_{\mathrm{sym}^jf}^2(n)l_1(n)\rchi(n) =  O \left( x^{1-\frac{1}{(j+1)^2}+\epsilon}q^{1+\epsilon} \right),
\end{align}
where $\rchi \rchi_4 \neq \rchi_0$.

\end{proof}

\begin{lemma} \label{lem:r4812chi2}
Let $j\geq 2$ be a fixed integer. For $m=4,8,12$ and any fixed $\epsilon>0$ and all sufficiently large $x$, we have
    \begin{align}
    \lambda_{\mathrm{sym}^jf}^2(n)r_m(n)\rchi(n) = \begin{cases}
        A\frac{\phi(q)}{q}x^\frac{m}{2} + O \left( x^{\frac{m}{2}-\frac{2}{(j+1)^2}+\epsilon}q^{\epsilon} \right), \quad \text{ when } \rchi = \rchi_0, \\
        O\left( x^{\frac{m}{2}-\frac{2}{(j+1)^2}+\epsilon}q^{1+\epsilon} \right), \quad \text{ when } \rchi \neq \rchi_0.
    \end{cases} 
\end{align}
\end{lemma}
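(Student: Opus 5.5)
The plan is to follow the template of Lemma~\ref{lem:r_2chi2}, now using the factorisations of Lemmas~\ref{lem:F2*}, \ref{lem:F4*} and \ref{lem:F6*}. By \eqref{eq:r_4&l_2}, \eqref{eq:r_8&l_4} and \eqref{eq:r_12&l_6}, the statement (read as a bound for $\sum_{n\le x}\lambda_{\mathrm{sym}^jf}^2(n)r_m(n)\rchi(n)$) reduces to estimating
\[
\sum_{n\le x}\lambda_{\mathrm{sym}^jf}^2(n)l_{m/2}(n)\rchi(n).
\]
For $m=12$ there is an extra contribution from $b_n$. As in the proof of Lemma~\ref{lem:r4681012chi}, I would treat it separately: its generating function is a cusp form of weight $6$, so its twisted partial sums carry no main term.

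Write $h=m/2\in\{2,4,6\}$. I would apply Perron's formula to $F_j^{(*h)}(s,\rchi)$ on the line $\Re(s)=h+\epsilon$, with $10\le T\le x$, which leaves an error $O(x^{h+\epsilon}/T)$. Next I shift the contour to $\Re(s)=h-\tfrac12+\epsilon$. This is allowed because $H_j^{(*h)}$ converges absolutely there. The factor $L(s,\rchi)\prod_n L(s,\mathrm{sym}^{2n}f\otimes\rchi)$ is absolutely convergent and $\ll q^\epsilon$ on that region, so all the difficulty sits in the shifted factor
\[
L(s-h+1,\rchi)\prod_{n=1}^{j}L(s-h+1,\mathrm{sym}^{2n}f\otimes\rchi).
\]
When $\rchi=\rchi_0$, Lemma~\ref{lem:PrincipalChar} shows this factor has a simple pole at $s=h$, coming from $\zeta(s-h+1)$. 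Its residue is $\frac{\phi(q)}{q}\,\frac{x^{h}}{h}$ times the value at $s=h$ of the remaining $L$-factors and $H_j^{(*h)}$. This gives the main term $A\frac{\phi(q)}{q}x^{m/2}$. When $\rchi\ne\rchi_0$ there is no pole, since $L(s,\mathrm{sym}^{2n}f\otimes\rchi)$ is entire.

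For the horizontal segments, I substitute $u=s-h+1$, so that $\Re u$ runs over $[\tfrac12+\epsilon,1+\epsilon]$. I then bound the degree-$(j+1)^2$ product $L(u,\rchi)\prod_n L(u,\mathrm{sym}^{2n}f\otimes\rchi)$ by convexity, using Lemma~\ref{lem:genLbound}, with the refinements of Lemmas~\ref{lem:zetabound}, \ref{lem:LSigmaChi} and \ref{lem:LSymFChi} available but not essential. This gives
\[
\frac{x^{h+\epsilon}}{T}+x^{h-\frac12+\epsilon}(qT)^{\frac{(j+1)^2}{4}}T^{-1}.
\]
For the vertical segment I use Lemma~\ref{lem:int}, then Hölder or Cauchy--Schwarz with the second-moment bound \eqref{eq:genL2mom}. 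The point is that only one copy of each $L$-function appears here, unlike the $r_2$ case where both $\rchi$ and $\rchi_4\rchi$ twists were present. The vertical segment therefore contributes
\[
x^{h-\frac12+\epsilon}\,q^{\frac{(j+1)^2}{4}+\epsilon}\,T^{\frac{(j+1)^2}{4}-\frac12+\epsilon}.
\]

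Finally I balance this against $x^{h+\epsilon}/T$. For $\rchi=\rchi_0$ the $q$-dependence is only $q^\epsilon$, by Lemma~\ref{lem:PrincipalChar}. Taking $T=x^{2/((j+1)^2+2)}$ gives a saving of at least $x^{2/(j+1)^2}$ only up to the adjustment $(j+1)^2\mapsto(j+1)^2+2$. To reach exactly the claimed exponent $\tfrac{2}{(j+1)^2}$, I would instead take $T=x^{2/(j+1)^2}$ (with $q$-factors for nonprincipal $\rchi$) and check that the vertical term is dominated. This balancing, and the resulting exponent of $q$, is where I expect the main obstacle to lie. The displayed exponent $q^{1+\epsilon}$ for nonprincipal $\rchi$ arises by choosing $T=x^{2/(j+1)^2}q^{-1}$, imitating the final step of Lemma~\ref{lem:r_2chi2}. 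For nonprimitive $\rchi$ I would pass to the inducing primitive character via Lemma~\ref{lem:PrimitiveChar}, at a cost of $q^\epsilon$.
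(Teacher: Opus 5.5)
There is a genuine gap, and it is at the step you flagged yourself: the vertical integral on $\Re(s)=\frac{m}{2}-\frac12+\epsilon$.

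You bound $\int_{T_1}^{2T_1}\bigl|L(u,\rchi)\prod_{n=1}^{j}L(u,\mathrm{sym}^{2n}f\otimes\rchi)\bigr|\,dt$ by Cauchy--Schwarz against the constant $1$, together with \eqref{eq:genL2mom} for the whole degree-$(j+1)^2$ product. This wastes a factor $T_1^{1/2}$, which is where your $T^{\frac{(j+1)^2}{4}-\frac12}$ comes from. With that bound the optimal choice is $T=x^{2/((j+1)^2+2)}$, and this only proves the error $x^{\frac{m}{2}-\frac{2}{(j+1)^2+2}+\epsilon}$.

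Your proposed repair does not work. At $T=x^{2/(j+1)^2}$ your vertical term is $x^{\frac{m}{2}-\frac12}\cdot x^{\frac12-\frac{1}{(j+1)^2}}=x^{\frac{m}{2}-\frac{1}{(j+1)^2}}$, which is not dominated by $x^{\frac{m}{2}-\frac{2}{(j+1)^2}}$. Similarly, for nonprincipal $\rchi$ the choice $T=x^{2/(j+1)^2}q^{-1}$ leaves $x^{\frac{m}{2}-\frac{1}{(j+1)^2}}q^{1/2}$, which exceeds $x^{\frac{m}{2}-\frac{2}{(j+1)^2}}q$ in the relevant range $q<x^{2/(j+1)^2}$.

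The missing idea, which the paper uses, is H\"older's inequality with exponents $(4,4,2)$ applied to three pieces:
\begin{itemize}
\item $\zeta(u)$ (resp.\ $L(u,\rchi)$), bounded by the fourth-moment estimate \eqref{eq:zeta12mom} (resp.\ Lemma~\ref{lem:LSigmaChi});
\item $L(u,\mathrm{sym}^2f\otimes\rchi)$, bounded by the fourth-moment estimate \eqref{eq:intLSigmafChi};
\item $\prod_{n=2}^{j}L(u,\mathrm{sym}^{2n}f\otimes\rchi)$, of degree $(j+1)^2-4$, which is the only factor bounded by the second moment \eqref{eq:genL2mom}.
\end{itemize}
This gives $T_1^{-1}\cdot T_1^{1/4}\cdot T_1^{3/4}\cdot T_1^{((j+1)^2-4)/4}=T_1^{\frac{(j+1)^2}{4}-1}$, times $q^{\frac{(j+1)^2}{4}+\epsilon}$ in the nonprincipal case. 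Then $T=x^{2/(j+1)^2}$, resp.\ $T=x^{2/(j+1)^2}q^{-1}$, balances it exactly against $x^{\frac{m}{2}+\epsilon}/T$ and yields the stated $q^{\epsilon}$ and $q^{1+\epsilon}$ errors.

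So the lemmas you called ``available but not essential'' are in fact essential for the vertical segment. You are right that convexity suffices on the horizontal segments at these choices of $T$; the paper uses subconvexity there, but it is not needed.

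The rest of your outline agrees with the paper: the reduction to $l_{m/2}$, the residue at $s=\frac{m}{2}$ coming from $\zeta(s-\frac{m}{2}+1)$, the negligibility of the $b_n$ term, and the passage to primitive characters.
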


\begin{proof}
We first calculate $\sum_{n\leq x} \lambda_{\mathrm{sym}^jf}^2(n)l_2(n)\rchi_0(n)$. 

Using the Lemma~\ref{lem:F2*}, Cauchy residue theorem, 

\begin{align}
     \lambda_{\mathrm{sym}^jf}^2(n)l_2(n)\rchi_0(n) & = \frac{\phi(q)}{q}a_{j,f}(2)x^2+\frac{1}{2\pi i}\left\{\int_{\frac{3}{2}+\epsilon-iT}^{\frac{3}{2}+\epsilon+iT}+\int_{2+\epsilon-iT}^{\frac{3}{2}+\epsilon-iT}+\int_{\frac{3}{2}+\epsilon+iT}^{2+\epsilon+iT}\right\}F_{j}^{(*2)}(s,\rchi_ 0)\frac{x^{s}}{s}ds \\
    &\quad + O\left(\frac{x^{2+\epsilon}}{T}\right) \\
    &= \frac{\phi(q)}{q}a_{j,f}(2)x^2+\frac{1}{2\pi i}(J_{1}+J_{2}+J_{3})+O\left(\frac{x^{2+\epsilon}}{T}\right), \quad \text{(say)} 
\end{align}

where \begin{align} \label{ajf}
    a_{j,f}(2) &= \frac{1}{2}L(2,\rchi_ 0)\prod_{n=1}^{j}L(2,\mathrm{sym}^{2n}f \otimes \rchi_ 0)L(1,\mathrm{sym}^{2n}f \times \rchi_ 0)H_j^2(2,\rchi_ 0)
\end{align}

Now using the lemmas~\ref{lem:F2*},~\ref{lem:PrincipalChar},~\ref{lem:zetabound},~\ref{lem:LSymFChi},~\ref{lem:genLbound}, we have 
\begin{align}
    J_2+J_3 &\ll \left| \left( \int_{2+\epsilon-iT}^{\frac{3}{2}+\epsilon-iT}+\int_{\frac{3}{2}+\epsilon+iT}^{2+\epsilon+iT} \right)F_j^{*2}(s, \rchi_ 0)\frac{x^s}{s}\right| \\
    &\ll  \int_{\frac{3}{2}+\epsilon}^{2+\epsilon} \left|\zeta(\sigma-1+iT)\prod_{n=1}^{j}L(\sigma+iT-1, \mathrm{sym}^{2n} f)\right|\frac{1}{T}x^\sigma q^{\epsilon} d\sigma \\
    &= \int_{\frac{1}{2}+\epsilon}^{1+\epsilon} \left|\zeta(\sigma+iT)\prod_{n=1}^{j}L(\sigma +iT, \mathrm{sym}^{2n} f)\right|\frac{x^{\sigma+1}}{T}q^{\epsilon}d\sigma \\
    &\ll \frac{x}{T}q^{\epsilon}\max_{\frac{1}{2}+\epsilon \leq \sigma \leq 1+\epsilon} x^\sigma T^{(\frac{13}{42}+\frac{6}{5}+\sum_{2\leq n\leq j}\frac{2n+1}{2})(1-\sigma)+\epsilon}\\
    &\ll \frac{x}{T}q^{\epsilon}\max_{\frac{1}{2}+\epsilon \leq \sigma \leq 1+\epsilon}x^\sigma T^{(\frac{(j+1)^2}{2}-\frac{103}{210})(1-\sigma)+\epsilon}.
\end{align}

The above function involving $\sigma$ is monotonic, so the maximum happens at the extreme points. We take both boundary points as an upper bound.

\begin{align*}
    J_2+J_3 &\ll \frac{x}{T}q^{\epsilon}\left[ x^{(1+\epsilon)} T^{(\frac{(j+1)^2}{2}-\frac{103}{210})(\epsilon)+\epsilon} +x^{(\frac{1}{2}+\epsilon)} T^{(\frac{(j+1)^2}{2}-\frac{103}{210})(\frac{1}{2}-\epsilon)+\epsilon}\right] \\
    &\ll \frac{x^{2+\epsilon}}{T}q^{\epsilon} + x^{\frac{3}{2}+\epsilon}T^{\frac{(j+1)^2}{4}-\frac{523}{420}+\epsilon}q^{\epsilon} .
\end{align*}

Contribution of the left vertical line integral ($J_{1}$) in absolute value (using Lemmas~\ref{lem:F2*},~\ref{lem:LSigmaChi},~\ref{lem:zetabound},~\ref{lem:LSymFChi},~\ref{lem:genLbound} and H\"older's inequality) is

\begin{align*}
J_1 &\ll \int_{\frac{3}{2}+\epsilon-iT}^{\frac{3}{2}+\epsilon+iT}\left|\zeta(\tfrac{1}{2}+\epsilon+it)\prod_{n=1}^{j}L(\tfrac{1}{2}+\epsilon+it,\mathrm{sym}^{2n}f)\right|\frac{x^{\frac{3}{2}+\epsilon}}{|t|}dt \\
&\ll x^{\frac{3}{2}+\epsilon} + x^{\frac{3}{2}+\epsilon}\int_{1 \leq |t| \leq T}\left|\zeta(\tfrac{1}{2}+\epsilon+it)\prod_{n=1}^{j}L(\tfrac{1}{2}+\epsilon+it,\mathrm{sym}^{2n}f)\right|\frac{1}{|t|}dt \\
&\ll  x^{\frac{3}{2}+\epsilon} + x^{\frac{3}{2}+\epsilon} \log T \text{ } \sup_{1 \leq T_1 \leq T}\frac{1}{T_1} \int_{T_1}^{2T_1}\left|\zeta(\tfrac{1}{2}+\epsilon+it)\prod_{n=1}^{j}L(\tfrac{1}{2}+\epsilon+it,\mathrm{sym}^{2n}f)\right| dt \\
&= x^{\frac{3}{2}+\epsilon} + x^{\frac{3}{2}+\epsilon}I_2,
\end{align*}

where the bounds of $I_2$ is given by as follows,

\begin{align}
    I_2 &=q^{\epsilon}\log T \sup_{1 \leq T_1 \leq T} \frac{1}{T_1} \int_{T_1}^{2T_1}\left|\zeta(\tfrac{1}{2}+\epsilon+it)\prod_{n=1}^{j}L(\tfrac{1}{2}+\epsilon+it,\mathrm{sym}^{2n}f)\right| dt \\
    &\ll q^{\epsilon}T^\epsilon \sup_{1 \leq T_1 \leq T}\frac{1}{T_1}\left(  \int_{T_1}^{2T_1}|\zeta(\frac{1}{2}+\epsilon+it)|^{4}dt \right)^\frac{1}{4}\left(\int_{T_1}^{2T_1}|L(\frac{1}{2}+\epsilon+it, \mathrm{sym}^{2}f)|^4dt\right)^\frac{1}{4} \\
    & \times \left( \int_{T_1}^{2T_1}\left| \prod_{n=2}^{j}L(\frac{1}{2}+\epsilon+it, \mathrm{sym}^{2n} f) \right|^{2}dt\right)^{\frac{1}{2}}\\
    &\ll q^{\epsilon}T^{\frac{1}{4}+\epsilon + 6(\frac{1}{2}+\epsilon)\frac{1}{4}+ ((j+1)^2-4)(\frac{1}{2}-\epsilon)\frac{1}{2}-1} = q^{\epsilon}T^{\frac{(j+1)^2}{4}-1+\epsilon}.
\end{align}

Thus, we have \begin{align}
    J_1+J_2+J_3 \ll \frac{x^{2+\epsilon}}{T}q^{\epsilon} + x^{\frac{3}{2}+\epsilon}T^{\frac{(j+1)^2}{4}-1+\epsilon}q^{\epsilon}.
\end{align}

Thus \begin{align}
    \lambda_{\mathrm{sym}^jf}^2(n)l_2(n)\rchi_0(n) = \frac{\phi(q)}{q}a_{j,f}(2)x^2 + O \left( \frac{x^{2+\epsilon}}{T}q^{\epsilon}+x^{\frac{3}{2}+\epsilon} T^{\frac{(j+1)^2}{4}-1+\epsilon}q^{\epsilon} \right).
\end{align}

Now put $T= x^{\frac{2}{(j+1)^2}}$, then we have \begin{align}
    \lambda_{\mathrm{sym}^jf}^2(n)l_2(n)\rchi_0(n) = \frac{\phi(q)}{q}a_{j,f}(2)x^2 + O \left( x^{2-\frac{2}{(j+1)^2}+\epsilon}q^{\epsilon} \right).
\end{align}

Now for $\lambda_{\mathrm{sym}^jf}^2(n)l_2(n)\rchi(n)$, where $\rchi$ is a non principal but a primitive Dirichlet character modulo $q$, we have by using the Lemma~\ref{lem:F2*} and by Cauchy residue theorem and by Perron's formula, we have \begin{align}
    \lambda_{\mathrm{sym}^jf}^2(n)l_2(n)\rchi(n) &=\frac{1}{2\pi i}\left\{\int_{\frac{3}{2}+\epsilon-iT}^{\frac{3}{2}+\epsilon+iT}+\int_{2+\epsilon-iT}^{\frac{3}{2}+\epsilon-iT}+\int_{\frac{3}{2}+\epsilon+iT}^{2+\epsilon+iT}\right\}F_{j}^{(*2)}(s,\rchi)\frac{x^{s}}{s}ds \\
    &\quad + O\left(\frac{x^{2+\epsilon}}{T}\right) \\
    &= \frac{1}{2\pi i}(J_{1}+J_{2}+J_{3})+O\left(\frac{x^{2+\epsilon}}{T}\right). \quad \text{(say)} .
\end{align}

Proceeding as before, we have by using the Lemmas~\ref{lem:F2*},~\ref{lem:PrimitiveChar}, ~\ref{lem:LSigmaChi},~\ref{lem:LSymFChi} and~\ref{lem:genLbound}), we have

 \begin{align}
    J_1+J_2+J_3 \ll  \frac{x^{2+\epsilon}}{T}q^{\epsilon} + x^{\frac{3}{2}+\epsilon}q^{^{\frac{(j+1)^2}{4}+\epsilon}}T^{\frac{(j+1)^2}{4}-1+\epsilon}.
\end{align}

If we put $T = \frac{x^{\frac{2}{(j+1)^2}}}{q}$, then we have \begin{align}
    \lambda_{\mathrm{sym}^jf}^2(n)l_2(n)\rchi(n) &= O\left( x^{2-\frac{2}{(j+1)^2}+\epsilon}q^{1+\epsilon} \right).
\end{align}

Similarly we have for $m=8,12$, \begin{align}
    \lambda_{\mathrm{sym}^jf}^2(n)l_{\frac{m}{2}}(n)\rchi_0(n) = \frac{\phi(q)}{q}a_{j,f}(\frac{m}{2})x^\frac{m}{2} + O \left( x^{\frac{m}{2}-\frac{2}{(j+1)^2}+\epsilon}q^{\epsilon} \right)
\end{align}
and 
\begin{align}
    \lambda_{\mathrm{sym}^jf}^2(n)l_{\frac{m}{2}}(n)\rchi(n) &= O\left( x^{\frac{m}{2}-\frac{2}{(j+1)^2}+\epsilon}q^{1+\epsilon} \right),
\end{align}
where $\rchi$ is not a principal Dirichlet character modulo $q$.

\end{proof}

\begin{lemma} \label{lem:r610chi2}
Let $j\geq 2$ be a fixed integer. For $m=6,10$ and for any fixed $\epsilon>0$ and all sufficiently large $x$, we have
    \begin{align} 
    \lambda_{\mathrm{sym}^jf}^2(n)r_m(n)(n)\rchi(n) = \begin{cases}
        A\frac{\phi(q)}{q}x^\frac{m}{2} +  O\left(x^{\frac{m}{2}-\frac{2}{(j+1)^2}+\epsilon}q^{1+\epsilon} \right) , \quad \text{ when } \rchi = \rchi_0 \text{ or } \rchi\rchi_4 = \rchi_0, \\
        O\left(x^{\frac{m}{2}-\frac{2}{(j+1)^2}+\epsilon}q^{1+\epsilon} \right), \quad \text{ when } \rchi \neq \rchi_0 \text{ and } \rchi\rchi_4 \neq \rchi_0.
    \end{cases}
\end{align}
\end{lemma}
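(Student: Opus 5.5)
We treat the sum in Lemma~\ref{lem:r610chi2} exactly as in Lemmas~\ref{lem:r_2chi2} and~\ref{lem:r4812chi2}, starting from the decompositions \eqref{eq:Lmbda6Def} and \eqref{eq:Lmbda10Def}. For $m=6$ we write
\[
\sum_{n\le x}\lambda_{\mathrm{sym}^jf}^2(n)r_6(n)\rchi(n)=16\sum_{n\le x}\lambda_{\mathrm{sym}^jf}^2(n)l_3(n)\rchi(n)-4\sum_{n\le x}\lambda_{\mathrm{sym}^jf}^2(n)v_3(n)\rchi(n).
\]
For $m=10$ we write the sum via $l_5$, $v_5$ and $a_n$. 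The term involving $a_n$ is a cusp-form-type coefficient with $a_n\ll n^{3}\log\log n$ (or $n^{4-\delta}$-size in Hecke's normalisation). We handle it as in the proof of Lemma~\ref{lem:r4681012chi}: it contributes at most $O(x^{5-1/2+\epsilon})$, which is absorbed in the error term because $\frac{2}{(j+1)^2}\le\frac{2}{9}<\frac12$.

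For each of the four remaining pieces we apply Perron's formula at $\Re(s)=\frac{m}{2}+\epsilon$ with $10\le T\le x$. We use the factorisations in Lemmas~\ref{lem:F3*'}, \ref{lem:F3*''}, \ref{lem:F5*'}, \ref{lem:F5*''}, and shift the contour to $\Re(s)=\frac{m}{2}-\frac12+\epsilon$, where the $H$-factors are absolutely convergent and $\ll 1$. The only possible pole is at $s=\frac m2$. For the $l_{m/2}$-pieces it comes from $L(s-\frac m2+1,\rchi)$, which has a pole exactly when $\rchi=\rchi_0$. For the $v_{m/2}$-pieces it comes from $L(s-\frac m2+1,\rchi_4\rchi)$, which has a pole exactly when $\rchi\rchi_4=\rchi_0$. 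This explains the two-case shape of the statement. Using Lemma~\ref{lem:PrincipalChar}, the residue is $\frac{\phi(q)}{q}$ (respectively $\frac{\phi(4q)}{4q}$, which is comparable) times a product of $L$-values at $1$ and $\frac m2$ and $x^{m/2}/(m/2)$. This gives the main term $A\frac{\phi(q)}{q}x^{m/2}$. In the remaining cases there is no pole.

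For the error terms, the factor shifted by $\frac m2-1$ is evaluated on $\frac12+\epsilon\le\Re\le 1+\epsilon$. The other factor, $L(s,\cdot)\prod L(s,\mathrm{sym}^{2n}f\otimes\cdot)$, sits at $\Re(s)\ge\frac m2-\frac12>1$ and is $\ll1$. So the analysis is literally that of Lemma~\ref{lem:r4812chi2} with $x^{1}$ replaced by $x^{m/2-1}$.

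\emph{Horizontal segments.} We reduce non-primitive characters to primitive ones via Lemma~\ref{lem:PrimitiveChar}. Lemmas~\ref{lem:zetabound}, \ref{lem:LSigmaChi}, \ref{lem:LSymFChi} and \ref{lem:genLbound} together with monotonicity in $\sigma$ then give
\[
\frac{x^{m/2+\epsilon}}{T}q^\epsilon+x^{\frac{m-1}{2}+\epsilon}(qT)^{\frac{(j+1)^2}{4}-1+\epsilon}.
\]

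\emph{Vertical segment.} We use Lemma~\ref{lem:int} and H\"older's inequality, taking fourth moments for $\zeta$ or $L(s,\rchi)$ and for $L(s,\mathrm{sym}^2f\otimes\rchi)$, and second moments from \eqref{eq:genL2mom} for the degree-$(2n+1)$ factors with $n\ge2$. This gives the bound
\[
x^{\frac{m-1}{2}+\epsilon}q^{\frac{(j+1)^2}{4}+\epsilon}T^{\frac{(j+1)^2}{4}-1+\epsilon}.
\]
Choosing $T=x^{2/(j+1)^2}q^{-1}$ balances the two terms. It yields the error $O(x^{\frac m2-\frac{2}{(j+1)^2}+\epsilon}q^{1+\epsilon})$ in every case, and the worst case $q^{1+\epsilon}$ is stated uniformly.

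The main obstacle is the bookkeeping of which of the two shifted $L$-factors carries $\rchi$ versus $\rchi_4\rchi$ in the $l$- and $v$-pieces, since this decides where the pole sits. A second difficulty is justifying that the $q$-exponent from the moment bounds, roughly $(j+1)^2/4$, collapses to $q^{1+\epsilon}$ after the choice of $T$; this needs $T\ge10$, that is $q\ll x^{2/(j+1)^2-\epsilon}$, which is implicit in the range used later. We would check the sign bookkeeping first by comparing Euler factors at $p$ using \eqref{lambdasquare}.
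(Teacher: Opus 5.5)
Your proposal is correct and follows essentially the same route as the paper. You split $r_6$ and $r_{10}$ into the $l_{m/2}$- and $v_{m/2}$-pieces and apply Perron's formula with the factorisations of $F_{j_1}^{(*3)}$, $F_{j_2}^{(*3)}$, $F_{j_1}^{(*5)}$, $F_{j_2}^{(*5)}$. You take the residue at $s=\frac{m}{2}$ from $L(s-\frac{m}{2}+1,\rchi)$ when $\rchi=\rchi_0$, or from $L(s-\frac{m}{2}+1,\rchi_4\rchi)$ when $\rchi\rchi_4=\rchi_0$. The shifted contour is then bounded as in Lemma~\ref{lem:r4812chi2} with $T=x^{2/(j+1)^2}q^{-1}$.

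There are two slips, both harmless. On the horizontal segments the $q$-power is essentially $q^{(j+1)^2/4}$, not $q^{(j+1)^2/4-1}$; this simply reproduces your vertical-segment term, so the final error is unchanged. For the $a_n$-piece, drop the ``$n^{4-\delta}$'' alternative, since it would not yield $x^{9/2}$; with $a_n\ll n^{3}$ as cited from Grosswald, the trivial bound is already $O(x^{4+\epsilon})$. Your explicit handling of that piece is more careful than the paper's, which just defers it to a parallel paper.
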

\begin{proof}
We estimate $\lambda_{\mathrm{sym}^jf}^2(n)v_3(n)\rchi(n)$, where $\rchi \rchi_4 = \rchi_0$. Then, using the above procedure, we have   \begin{align}
     \lambda_{\mathrm{sym}^jf}^2(n)v_3(n)\rchi(n) = \frac{\phi(4q)}{4q}b''_{j,f}(3)x^3 + O\left(\frac{x^{2+\epsilon}}{T}q^{\epsilon}+x^{\frac{3}{2}+\epsilon} T^{\frac{(j+1)^2}{4}-1+\epsilon}q^{\epsilon} \right),
\end{align}

where \begin{align} \label{eq:b''jf}
    b''_{j,f}(3) &= \frac{1}{3}L(3,\rchi)\prod_{n=1}^{j}L(3,\mathrm{sym}^{2n}f\otimes \rchi)L(1, \mathrm{sym}^{2n}f \otimes \rchi_ 4\rchi)H_j^{(3)}(3,\rchi)
\end{align}

and 
\begin{align}
    \lambda_{\mathrm{sym}^jf}^2(n)l_3(n)\rchi(n) = O \left( \frac{x^{2+\epsilon}}{T}q^{\epsilon} + x^{\frac{3}{2}+\epsilon}q^{^{\frac{(j+1)^2}{4}+\epsilon}}T^{\frac{(j+1)^2}{4}-1+\epsilon} \right).
\end{align}
Thus \begin{align}
     \lambda_{\mathrm{sym}^jf}^2(n)r_6(n)\rchi(n) = \frac{\phi(4q)}{4q}b''_{j,f}(3)x^3 + O\left(\frac{x^{2+\epsilon}}{T}q^{\epsilon}+x^{\frac{3}{2}+\epsilon} T^{\frac{(j+1)^2}{4}-1+\epsilon} q^{^{\frac{(j+1)^2}{4}+\epsilon}}\right).
\end{align}

Now put $T = \frac{x^{\frac{2}{(j+1)^2}}}{q}$, then we have \begin{align}
     \lambda_{\mathrm{sym}^jf}^2(n)r_6(n)\rchi(n) = \frac{\phi(4q)}{4q}b''_{j,f}(3)x^3 + O\left(x^{3-\frac{2}{(j+1)^2}+\epsilon}q^{1+\epsilon}\right),
\end{align}
for $\rchi \rchi_4 \neq \rchi_0$.

We estimate $\lambda_{\mathrm{sym}^jf}^2(n)l_3(n)\rchi_0(n)$ and we have as proceeding as above,
\begin{align}
    \lambda_{\mathrm{sym}^jf}^2(n)r_6(n)\rchi_0(n) = \frac{\phi(q)}{q}b'_{j,f}(3)x^3 + O \left( x^{3-\frac{2}{(j+1)^2}+\epsilon}q^{1+\epsilon} \right),
\end{align}
where \begin{align} \label{eq:b'jf}
    b'_{j,f}(3) &= \frac{1}{3}L(3,\rchi_ 4\rchi_ 0)\prod_{n=1}^{j}L(3,\mathrm{sym}^{2n}f\otimes \rchi_ 4\rchi_ 0)L(1, \mathrm{sym}^{2n}f \otimes \rchi_ 0)H_j(3,\rchi_ 0).
\end{align}

For $\rchi\rchi_4 \neq \rchi_0$ and $\rchi \neq \rchi_0$, we have 
\begin{align}
     \lambda_{\mathrm{sym}^jf}^2(n)r_6(n)\rchi(n) = O\left(x^{3-\frac{2}{(j+1)^2}+\epsilon}q^{1+\epsilon} \right).
\end{align}

Similarly we have \begin{align}
    \lambda_{\mathrm{sym}^jf}^2(n)r_{10}(n)(n)\rchi_0(n) = \frac{64}{5}\frac{\phi(q)}{q}b'_{j,f}(5)x^5 +  O\left(x^{5-\frac{2}{(j+1)^2}+\epsilon}q^{1+\epsilon} \right),
\end{align}
and 
\begin{align}
     \lambda_{\mathrm{sym}^jf}^2(n)r_{10}(n)\rchi(n) = \frac{4}{5}\frac{\phi(4q)}{4q}b''_{j,f}(5)x^5 + O\left(x^{5-\frac{2}{(j+1)^2}+\epsilon}q^{1+\epsilon} \right),
\end{align}
where $\rchi \rchi_4 = \rchi_0$ and 

\begin{align}
     \lambda_{\mathrm{sym}^jf}^2(n)r_{10}(n)\rchi(n) = O\left(x^{5-\frac{2}{(j+1)^2}+\epsilon}q^{1+\epsilon} \right),
\end{align}
where $\rchi \rchi_4 \neq \rchi_0$.
\end{proof}.

\section{Proof of Theorem~\ref{thm:main}} \label{sec:main}

Let $\rchi$ be a Dirichlet character modulo $q$. By orthogonality, we have
\begin{align}
    &\sum_{\substack{n\leq x+1 \\ n \equiv 1~( \mathrm{mod }~q)}}\lambda_{\mathrm{sym}^jf}(n)l_1(n) = \frac{1}{\phi(q)} \sum_{\rchi(q)}\sum_{n\leq x+1} \lambda_{\mathrm{sym}^jf}(n)l_1(n)\rchi(n) \\
    &= \frac{1}{\phi(q)} \left \{ \sum_{n\leq x+1} \lambda_{\mathrm{sym}^jf}(n)l_1(n)\rchi_ 0(n) + \sum_{\substack{n \leq x+1 \\  \rchi_ 4\rchi = \rchi_ 0 }} \lambda_{\mathrm{sym}^jf}(n)l_1(n)\rchi(n)  \right. \\
    &+\left . \sum_{\substack{\rchi(q) \\ \rchi, \rchi_ 4\rchi \neq \rchi_ 0 \\ \rchi \text{ non primitive}}}\sum_{n\leq x+1} \lambda_{\mathrm{sym}^jf}(n)l_1(n)\rchi(n) +\sum_{\substack{\rchi(q) \\ \rchi_ 4, \rchi \neq \rchi_ 0 \\ \rchi \text{ non primitive}}}\sum_{n\leq x+1} \lambda_{\mathrm{sym}^jf}(n)l_1(n)\rchi(n)\right\} \\
    &= \frac{1}{\phi(q)}\left (\sum_1 + \sum_2 + \sum_3 + \sum_4\right).
\end{align}

Now by the Lemma~\ref{lem:r_2chi}, we have \begin{align}
    \sum_1+\sum_2 = O\left (x^{1-\frac{1}{j+1}+\epsilon}q^{\frac{1}{2}+\epsilon}\right)
\end{align}
and 
\begin{align}
    \sum_3+\sum_4 = O\left (x^{1-\frac{1}{j+1}+\epsilon}q^{1+\epsilon}\right).
\end{align}

Thus, we have \begin{align}
    \sum_{\substack{n\leq x+1 \\ n \equiv 1~( \mathrm{mod }~q)}}\lambda_{\mathrm{sym}^jf}(n)r_2(n) = O\left (\frac{x^{1-\frac{1}{j+1}+\epsilon}q^{1+\epsilon}}{\phi(q)}\right).
\end{align}

\section{Proof of Theorem~\ref{thm:main(4681012)}} \label{sec:main(4681012)}

 We first consider the sum of $4$ squares. By orthogonality, we have

\begin{align}
    \sum_{\substack{n\leq x+1 \\ n \equiv 1~( \mathrm{mod }~q)}}\lambda_{\mathrm{sym}^jf}(n)r_m(n) &= \frac{1}{\phi(q)} \sum_{\rchi(q)}\sum_{n\leq x+1} \lambda_{\mathrm{sym}^jf}(n)r_m(n)\rchi(n) \\
    &= \frac{1}{\phi(q)} \left \{ \sum_{n\leq x+1} \lambda_{\mathrm{sym}^jf}(n)r_m(n)\rchi_ 0(n) + \sum_{\substack{\rchi(q) \\ \rchi \neq \rchi_ 0 \\ \rchi \text{ primitive}}}\sum_{n\leq x+1} \lambda_{\mathrm{sym}^jf}(n)r_m(n)\rchi(n)  \right. \\
    &+\left . \sum_{\substack{\rchi(q) \\ \rchi \neq \rchi_ 0 \\ \rchi \text{ non primitive}}}\sum_{n\leq x+1} \lambda_{\mathrm{sym}^jf}(n)r_m(n)\rchi(n)\right\} \\
    &= \frac{1}{\phi(q)}\left (\sum_1 + \sum_2 + \sum_3 \right).
\end{align}

Now by Lemma~\ref{lem:r4681012chi}, we have \begin{align}
    \sum_1 = O\left( x^{\frac{m}{2}-\frac{2}{j+3}+\epsilon} \right)
\end{align}
and 
\begin{align}
    \sum_2+\sum_3 = O \left( x^{\frac{m}{2}-\frac{2}{j+3}+\epsilon}q^{\frac{j+1}{j+3}+\epsilon} \right).
\end{align}
Thus, we have \begin{align}
    \sum_{\substack{n\leq x+1 \\ n \equiv 1~( \mathrm{mod }~q)}}\lambda_{\mathrm{sym}^jf}(n)r_m(n) = O \left( \frac{x^{\frac{m}{2}-\frac{2}{j+3}+\epsilon}q^{\frac{j+1}{j+3}+\epsilon}}{\phi(q)} \right).
\end{align}

\section{Proof of Theorem~\ref{thm:main1}} \label{sec:main1}

We first consider the sum $\sum_{\substack{n\leq x \\ n \equiv 1 ~( \mathrm{mod }~q)}}\lambda_{\mathrm{sym}^jf}^2(n)l_1(n)$. By orthogonality, we have
\begin{align}
    &\sum_{\substack{n\leq x+1 \\ n \equiv 1~( \mathrm{mod }~q)}}\lambda_{\mathrm{sym}^jf}^2(n)l_1(n) = \frac{1}{\phi(q)} \sum_{\rchi(q)}\sum_{n\leq x+1} \lambda_{\mathrm{sym}^jf}^2(n)l_1(n)\rchi(n) \\
    &= \frac{1}{\phi(q)} \left \{ \sum_{n\leq x+1} \lambda_{\mathrm{sym}^jf}^2(n)l_1(n)\rchi_ 0(n) + \sum_{\substack{n \leq x+1 \\  \rchi_ 4\rchi = \rchi_ 0 }} \lambda_{\mathrm{sym}^jf}^2(n)l_1(n)\rchi(n)  \right. \\
    &+\left . \sum_{\substack{\rchi(q) \\ \rchi, \rchi_ 4\rchi \neq \rchi_ 0 \\ \rchi \text{primitive}}}\sum_{n\leq x+1} \lambda_{\mathrm{sym}^jf}^2(n)l_1(n)\rchi(n) +\sum_{\substack{\rchi(q) \\ \rchi_ 4, \rchi \neq \rchi_ 0 \\ \rchi \text{ non primitive}}}\sum_{n\leq x+1} \lambda_{\mathrm{sym}^jf}^2(n)l_1(n)\rchi(n)\right\} \\
    &= \frac{1}{\phi(q)}\left (\sum_1 + \sum_2 + \sum_3 + \sum_4\right).
\end{align}

Now by Lemma~\ref{lem:r_2chi2}, we have \begin{align}
    \sum_1 + \sum_2 = A \frac{\phi(q)}{q}x + O\left( x^{1-\frac{1}{(j+1)^2}+\epsilon}q^{\frac{1}{2}+\epsilon} \right)
\end{align}

and \begin{align}
    \sum_3+ \sum_4 = O\left( x^{1-\frac{1}{(j+1)^2}+\epsilon}q^{1+\epsilon} \right).
\end{align}

Thus, we have \begin{align}
    \sum_{\substack{n\leq x+1 \\ n \equiv 1~( \mathrm{mod }~q)}}\lambda_{\mathrm{sym}^jf}^2(n)r_2(n) = A \frac{1}{q}x + O\left( \frac{x^{1-\frac{1}{(j+1)^2}+\epsilon}q^{1+\epsilon}}{\phi(q)} \right).
\end{align}

\section{Proof of Theorem~\ref{thm:main1(468)}} \label{sec:main1(468)}

We first calculate $\sum_{\substack{n\leq x+1 \\ n \equiv 1~( \mathrm{mod }~q)}}\lambda_{\mathrm{sym}^jf}^2(n)r_m(n)$, for $m=4,8,12$. As above, \begin{align}
    \sum_{\substack{n\leq x+1 \\ n \equiv 1~( \mathrm{mod }~q)}}\lambda_{\mathrm{sym}^jf}^2(n)r_m(n) &= \frac{1}{\phi(q)} \sum_{\rchi(q)}\sum_{n\leq x+1} \lambda_{\mathrm{sym}^jf}^2(n)l_2(n)\rchi(n) \\
    &= \frac{1}{\phi(q)} \left \{ \sum_{n\leq x+1} \lambda_{\mathrm{sym}^jf}^2(n)r_m(n)\rchi_ 0(n) + \sum_{\substack{\rchi(q) \\ \rchi \neq \rchi_ 0 \\ \rchi \text{ primitive}}}\sum_{n\leq x+1} \lambda_{\mathrm{sym}^jf}^2(n)r_m(n)\rchi(n)  \right. \\
    &+\left . \sum_{\substack{\rchi(q) \\ \rchi \neq \rchi_ 0 \\ \rchi \text{ non primitive}}}\sum_{n\leq x+1} \lambda_{\mathrm{sym}^jf}^2(n)r_m(n)\rchi(n)\right\} \\
    &= \frac{1}{\phi(q)}\left (\sum_1 + \sum_2 + \sum_3 \right).
\end{align}

Now from Lemma~\ref{lem:r4812chi2}, we have \begin{align}
    \sum_1 = A \frac{\phi(q)}{q}x^{\frac{m}{2}}+O\left(x^{\frac{m}{2}-\frac{2}{(j+1)^2}+\epsilon} \right),
\end{align}
and \begin{align}
    \sum_2+\sum_3 = O\left(x^{\frac{m}{2}-\frac{2}{(j+1)^2}+\epsilon}q^{1+\epsilon}\right).
\end{align}

Again for $m=6,10$, we have \begin{align}
    &\sum_{\substack{n\leq x+1 \\ n \equiv 1~( \mathrm{mod }~q)}}\lambda_{\mathrm{sym}^jf}^2(n)r_m(n) = \frac{1}{\phi(q)} \sum_{\rchi(q)}\sum_{n\leq x+1} \lambda_{\mathrm{sym}^jf}^2(n)r_m(n)\rchi(n) \\
    &= \frac{1}{\phi(q)} \left \{ \sum_{n\leq x+1} \lambda_{\mathrm{sym}^jf}^2(n)r_m(n)\rchi_ 0(n) + \sum_{\substack{n \leq x+1 \\  \rchi_ 4\rchi = \rchi_ 0 }} \lambda_{\mathrm{sym}^jf}^2(n)r_m(n)\rchi(n)  \right. \\
    &+\left . \sum_{\substack{\rchi(q) \\ \rchi, \rchi_ 4\rchi \neq \rchi_ 0 \\ \rchi \text{ primitive}}}\sum_{n\leq x+1} \lambda_{\mathrm{sym}^jf}^2(n)r_m(n)\rchi(n) +\sum_{\substack{\rchi(q) \\ \rchi_ 4, \rchi \neq \rchi_ 0 \\ \rchi \text{ non primitive}}}\sum_{n\leq x+1} \lambda_{\mathrm{sym}^jf}^2(n)r_m(n)\rchi(n)\right\} \\
    &= \frac{1}{\phi(q)}\left (\sum_1 + \sum_2 + \sum_3 + \sum_4\right).
\end{align}

Now, from Lemma~\ref{lem:r610chi2}, we have \begin{align}
    \sum_1+\sum_2 = A\frac{\phi(q)}{q}x^{\frac{m}{2}}+O \left( x^{\frac{m}{2}-\frac{2}{(j+1)^2}+\epsilon}q^{1+\epsilon}\right)
\end{align}

and 
\begin{align}
    \sum_3+\sum_4 = O \left( x^{\frac{m}{2}-\frac{2}{(j+1)^2}+\epsilon}q^{1+\epsilon}\right).
\end{align}

Thus, in both cases, we have \begin{align}
    \sum_{\substack{n\leq x+1 \\ n \equiv 1~( \mathrm{mod }~q)}}\lambda_{\mathrm{sym}^jf}^2(n)r_m(n) = A \frac{1}{q}x^{\frac{m}{2}} + O\left( \frac{x^{\frac{m}{2}-\frac{2}{(j+1)^2}+\epsilon}q^{1+\epsilon}}{\phi(q)} \right).
\end{align}

\section{Proof of Theorem~\ref{thm:main2}} \label{sec:conv-sum}

By the definition of $k$-full kernel function $a(n)$, we can decompose $a(n)$ uniquely as $a(n) = k(n)q(n)$ with $(k(n),q(n)) = 1$, where $k(n)$ is $k$-full and $q(n)$ is $k$-free. Also note that $$ a(n) = a(k(n)) \ll n^\epsilon, \quad \lambda_{\mathrm{sym}^jf}(n) \ll n^\epsilon, \quad r_{m}(n) \ll n^{\frac{m}{2}+\epsilon},$$ for all $\epsilon > 0$. Here we will follow the procedure in \cite{Venkat}.

Let $1 < H \leq x^{\frac{2}{j+3}}$, then 
\begin{align}
    &\sum_{n \leq x} a(n) \lambda_{\mathrm{sym}^jf}(n+1) r_{m}(n+1) \\
    & = \sum_{\substack{n\leq x \\ k(n) \leq H}} a(n) \lambda_{\mathrm{sym}^jf}(n+1) r_{m}(n+1) + \sum_{\substack{n\leq x \\ k(n) > H}} a(n) \lambda_{\mathrm{sym}^jf}(n+1) r_{m}(n+1) \\
    & = \sum_{\substack{n\leq x \\ k(n) \leq H}} a(n) \lambda_{\mathrm{sym}^jf}(n+1) r_{m}(n+1) + O\left( \sum_{H < k(n) \leq x} a(k(n)) \sum_{\substack{q(n) \leq \frac{x}{k(n)} \\ (q(n),k(n)) = 1}}\lambda_{\mathrm{sym}^jf}(n+1) r_{m}(n+1) \right) \\
    & = I_1+I_2.
\end{align}

Now \begin{align}
    I_2 &= O\left( \sum_{H < k(n) \leq x}(k(n))^{\epsilon}\sum_{\substack{q(n) \leq \frac{x}{k(n)} \\ (q(n),k(n)) = 1}} (k(n)q(n))^{\frac{m}{2}-1+\epsilon}\right) \\
    & = O\left( \sum_{H < k(n) \leq x} (k(n))^{\frac{m}{2}-1+2\epsilon}\left( \frac{x}{k(n)}\right)^{\frac{m}{2}+\epsilon} \right) = O \left( x^{5+\epsilon}\sum_{H < k(n) \leq x}\frac{1}{(k(n))^{1-\epsilon}} \right) \\
    & = O(x^{\frac{m}{2}+\epsilon}H^{\frac{1}{k}-1+\epsilon}).
\end{align}

Here we have used the fact that $$ \# \{n \leq H \bigg| \text{ } n \text{ is $k$ - full \} } \ll H^{1/k}, \quad (\text{ by Lemma $2.12$ of \cite{Venkat} })$$ and $$ \sum_{\substack{n > H \\ n \text{ $k$-full}}}\frac{1}{n^{1-\epsilon}} \ll H^{\frac{1}{k}-1+\epsilon}, \quad (\text{ by Lemma $2.12$ of \cite{Venkat} }).$$

Now define $g(l) = \sum_{md^k=l}\mu(d)$, then $g(q(n) = 1$ as $q(n)$ is $k$-free. We have \begin{align}
    I_1 &= \sum_{\substack{n\leq x \\ k(n) \leq H}} a(n) \lambda_{\mathrm{sym}^jf}(n+1) r_{m}(n+1) \\
    & \ll \sum_{k(n) \leq H}a(k(n)) \sum_{\substack{q(n) \leq \frac{x}{k(n)}\\ (q(n),k(n)=1}}\lambda_{\mathrm{sym}^jf}(k(n)q(n)+1) r_{m}(k(n)q(n)+1) \\
    & \ll  \sum_{k(n) \leq H}a(k(n)) \sum_{\substack{q(n) \leq \frac{x}{k(n)}\\ (q(n),k(n)=1}}g(q(n))\lambda_{\mathrm{sym}^jf}(k(n)q(n)+1) r_{m}(k(n)q(n)+1) \\
    & \ll \sum_{k(n) \leq H}a(k(n)) \sum_{\substack{q(n) \leq \frac{x}{k(n)}\\ (q(n),k(n)=1}} \sum_{m(n)d^k(n) = q(n)} \mu(d(n)) \lambda_{\mathrm{sym}^jf}(k(n)q(n)+1) r_{m}(k(n)q(n)+1) \\
    & \ll \sum_{k(n) \leq H}a(k(n)) \sum_{\substack{d(n) \leq (\frac{x}{k(n)})^{1/k} \\ (d(n),k(n))=1}} \mu(d(n)) \sum_{\substack{m(n) \leq \frac{x}{k(n)d^k(n)} \\ (m(n),k(n) =1}} \lambda_{\mathrm{sym}^jf}(k(n)m(n)d^k(n)+1)r_{m}(k(n)m(n)d^k(n)+1) \\
    & = \sum_1^* + \sum_2^*,
    \end{align}

    where \begin{align}
        \sum_1^* & = \sum_{k(n) \leq H}a(k(n)) \sum_{\substack{d(n) \leq H^{1/k} \\ (d(n),k(n)) = 1}} \mu(d(n)) \\
        & \times \sum_{\substack{m(n) \leq \frac{x}{k(n)d^k(n)} \\ (m(n),k(n) =1}}\lambda_{\mathrm{sym}^jf}(k(n)m(n)d^k(n)+1)r_{m}(k(n)m(n)d^k(n)+1)
    \end{align}

    and \begin{align}
        \sum_2^* & = \sum_{k(n) \leq H}a(k(n)) \sum_{\substack{H^{1/k} < d(n) \leq (\frac{x}{k(n)})^{1/k} \\ (d(n),k(n)) = 1}} \mu(d(n)) \\
        & \times \sum_{\substack{m(n) \leq \frac{x}{k(n)d^k(n)} \\ (m(n),k(n) =1}}\lambda_{\mathrm{sym}^jf}(k(n)m(n)d^k(n)+1)r_{m}(k(n)m(n)d^k(n)+1).
    \end{align}
    Now for $\sum_2^*$, we have
    \begin{align}
        \sum_2^* &\ll \sum_{k(n) \leq H}(k(n))^\epsilon \sum_{d(n) \geq H^{1/k}} \sum_{\substack{m(n) \leq \frac{x}{k(n)d^k(n)} \\ (m(n),k(n) =1}}(k(n)m(n)d^k(n))^{\frac{m}{2}-1+\epsilon} \\
        & \ll \sum_{k(n) \leq H}(k(n))^{\frac{m}{2}-1+2\epsilon}\sum_{d(n) \geq H^{1/k}}(d^k(n))^{\frac{m}{2}-1+\epsilon}\left(\frac{x}{k(n)d^k(n)}\right)^{\frac{m}{2}+\epsilon} \\
        & \ll x^{\frac{m}{2}+\epsilon}\sum_{k(n) \leq H} \frac{1}{(k(n))^{1-\epsilon}} \sum_{d^k(n) \geq H}\frac{1}{d^k(n)} \\
        & \ll x^{\frac{m}{2}+\epsilon}[H^{\epsilon}+\zeta(1-\epsilon)+O(1/H^{1-\epsilon})]H^{1/k-1} \\
        & \ll x^{\frac{m}{2}+\epsilon}H^{1/k-1+\epsilon}.
    \end{align}

    Now note that $$\sum_{\delta(n)|(m(n),k(n))} \mu(\delta(n)) = \begin{cases}
        1 \text{ if } (m(n),k(n))= 1, \\ 0 \text{ otherwise}.
    \end{cases}$$

    So we have for $\sum_1^*$, \begin{align} \label{eq:sum_conv}
        \sum_1^* &= \sum_{k(n) \leq H}a(k(n)) \sum_{\substack{d(n) \leq H^{1/k} \\ (d(n),k(n)) = 1}} \mu(d(n)) \sum_{\delta(n)|k(n)} \mu(\delta(n)) \\
        & \times \sum_{m_1(n)\delta(n)k(n)d^k(n) \leq x}\lambda_{\mathrm{sym}^jf}(k(n)m_1(n)\delta(n)d^k(n)+1)r_{m}(k(n)m_1(n)\delta(n)d^k(n)+1).
    \end{align}

    Here $m_1(n)$ varies, so we can write, \begin{align} \label{eq:m1(n)}
        &\sum_{\substack{m_1(n)\delta(n)k(n)d^k(n) \leq x \\}}\lambda_{\mathrm{sym}^jf}(~k(n)m_1(n)\delta(n)d^k(n)+1~)r_{m}(~k(n)m_1(n)\delta(n)d^k(n)+1~) \\ 
        & := \sum_{\substack{R \leq x+1 \\ R \equiv 1 ~(\mathrm{ mod }~ k(n)\delta(n)d^k(n))}}\lambda_{\mathrm{sym}^jf}(R)r_{m}(R) \\
        & =  O\left( \frac{x^{\frac{m}{2}+\epsilon-\frac{2}{j+3}}(k(n)\delta(n)d^k(n))^{\frac{j+1}{j+3}+\epsilon}}{\phi(k(n)\delta(n)d^k(n))} \right) \quad \text{(by Theorem }~\ref{thm:main}) \\
        & =  O\left( \frac{x^{\frac{m}{2}+\epsilon-\frac{2}{j+3}}(k(n)\delta(n)d^k(n))^{1+\epsilon}}{\phi(k(n)\delta(n)d^k(n))} \right) .
    \end{align}

    So \begin{align}
        \sum_1^* &\ll  \sum_{k(n) \leq H}a(k(n)) \sum_{\substack{d(n) \leq H^{1/k} \\ (d(n),k(n)) = 1}} \mu(d(n)) \sum_{\delta(n)|k(n)} \mu(\delta(n)) \frac{x^{\frac{m}{2}+\epsilon-\frac{2}{j+3}}(k(n)\delta(n)d^k(n))^{1+\epsilon}}{\phi(k(n)\delta(n)d^k(n))} \\
        & \ll \sum_{k(n) \leq H}(k(n))^{\epsilon} \sum_{\substack{d(n) \leq H^{1/k} \\ (d(n),k(n)) = 1}}1 \sum_{\delta(n)|k(n)}x^{\frac{m}{2}+\epsilon-\frac{2}{j+3}}(k(n)\delta(n)d^k(n))^{\epsilon} \frac{k(n)\delta(n)}{\phi(k(n)\delta(n))} \frac{d^k(n)}{\phi(d^k(n))} \\
        & \ll x^{\frac{m}{2}+4\epsilon-\frac{2}{j+3}}  \sum_{k(n) \leq H} (k(n))^\epsilon  \sum_{\substack{d(n) \leq H^{1/k}}}1 \sum_{\delta(n)|k(n)} \log \log (k(n)\delta(n)) \log \log (d^k(n)) \\
        & \ll x^{\frac{m}{2}+4\epsilon-\frac{2}{j+3}} \log \log H^2 \log \log H \sum_{k(n) \leq H} (k(n))^\epsilon \sum_{\substack{d(n) \leq H^{1/k}}}1 \sum_{\delta(n)|k(n)}1 \\
        & \ll x^{\frac{m}{2}+4\epsilon-\frac{2}{j+3}} \log \log H^2 \log \log H \sum_{k(n) \leq H} (k(n))^{2\epsilon}\sum_{\substack{d(n) \leq H^{1/k}}}1 \\
        & \ll x^{\frac{m}{2}+4\epsilon-\frac{2}{j+3}} \log \log H^2 \log \log H \sum_{k(n) \leq H} (k(n))^{2\epsilon}H^{1/k} \\
        & \ll x^{\frac{m}{2}+4\epsilon-\frac{2}{j+3}}H^{2\epsilon} H^{1/k}\log \log H^2 \log \log H\sum_{k(n) \leq H}1  \\
        &\ll x^{\frac{m}{2}+\epsilon-\frac{2}{j+3}}H^{2/k+\epsilon}.
    \end{align}

    Thus combining everything, we have \begin{align}
        \sum_{n \leq x} a(n) \lambda_{\mathrm{sym}^jf}(n+1)r_{m}(n+1) = O\left( x^{\frac{m}{2}+\epsilon}H^{1/k-1+\epsilon} + x^{\frac{m}{2}-\frac{2}{j+3}+\epsilon}H^{2/k+\epsilon} \right).
    \end{align}

    Note that $q = \delta(n)k(n)d^k(n)$ and $\delta(n), k(n), d^k(n) \leq H$, so $q \leq H^3$. Also $x$ is a very large number, so we can consider $q \ll x^{\frac{2}{j+1}-\epsilon}$. Thus we choose a optimal value of $H$, which is $H = x^{\frac{2}{3(j+1)}-\epsilon}$, and we obtain \begin{align}
        \sum_{n \leq x} a(n) \lambda_{\mathrm{sym}^jf}(n+1)r_{m}(n+1) &= O\left( x^{\frac{m}{2}-\frac{2k-2}{3k(j+1)}+\epsilon} + x^{\frac{m}{2}+\frac{4}{3k(j+1)}-\frac{2}{j+3} + \epsilon} \right) = O \left( x^{\frac{m}{2}-\frac{2k-2}{3k(j+1)}+\epsilon} \right), 
    \end{align}
    which is true for $j \geq 1$. Thus we have our result for $m=4,6,8,10,12$.

    Now for $m=2$, it will be same upto \eqref{eq:sum_conv} and we have from \eqref{eq:m1(n)}, \begin{align}
        &\sum_{\substack{m_1(n)\delta(n)k(n)d^k(n) \leq x \\}}\lambda_{\mathrm{sym}^jf}(k(n)m_1(n)\delta(n)d^k(n)+1)r_{2}(k(n)m_1(n)\delta(n)d^k(n)+1) \\ 
        & := \sum_{\substack{R \leq x+1 \\ R \equiv 1 ~(\mathrm{ mod ~}k(n)\delta(n)d^k(n))}}\lambda_{\mathrm{sym}^jf}(R)r_{2}(R) \\
        & =  O\left( \frac{x^{1+\epsilon-\frac{1}{j+1}}(k(n)\delta(n)d^k(n))^{1+\epsilon}}{\phi(k(n)\delta(n)d^k(n))} \right) \quad \text{( by theorem }~\ref{thm:main}).
    \end{align}
    So as in the previous calculation, we have \begin{align}
        \sum_1^* &\ll \sum_{k(n) \leq H}a(k(n)) \sum_{\substack{d(n) \leq H^{1/k} \\ (d(n),k(n)) = 1}} \mu(d(n)) \sum_{\delta(n)|k(n)} \mu(\delta(n)) \frac{x^{1+\epsilon-\frac{1}{j+1}}(k(n)\delta(n)d^k(n))^{1+\epsilon}}{\phi(k(n)\delta(n)d^k(n))} \\
        &\ll x^{1+\epsilon-\frac{1}{j+1}}H^{2/k+\epsilon}.
    \end{align}

    Thus, combining everything, we have \begin{align}
        \sum_{n \leq x} a(n) \lambda_{\mathrm{sym}^jf}(n+1)r_{2}(n+1) = O\left( x^{1+\epsilon}H^{1/k-1+\epsilon} + x^{1-\frac{1}{j+1}+\epsilon}H^{2/k+\epsilon} \right).
    \end{align}
As above, we note that $q \ll x^{\frac{1}{j+1}-\epsilon}$ and we take $H = x^{\frac{1}{3(j+1)}-\epsilon}$. Thus, we have \begin{align}
    \sum_{n \leq x} a(n) \lambda_{\mathrm{sym}^jf}(n+1)r_{2}(n+1) = O\left( x^{1-\frac{k-1}{3k(j+1)}+\epsilon} \right).
\end{align}

\section{Proof of Theorem~\ref{thm:main3}} \label{sec:conv-sum-square}

Note that $a^2(n) = a^2(k(n)$ and $a^2(n) \ll n^\epsilon$ for all $\epsilon > 0$. Now to prove this theorem, we will follow the same steps up to \eqref{eq:sum_conv} in the Theorem~\ref{thm:main2}. Then we have \begin{align} \label{eq:m11(n)}
    &\sum_{\substack{m_1(n)\delta(n)k(n)d^k(n) \leq x \\}}\lambda_{\mathrm{sym}^jf}^2(k(n)m_1(n)\delta(n)d^k(n)+1)r_{m}(k(n)m_1(n)\delta(n)d^k(n)+1) \\ 
        & := \sum_{\substack{R \leq x+1 \\ R \equiv 1 ~( \mathrm{mod~}k(n)\delta(n)d^k(n))}}\lambda_{\mathrm{sym}^jf}^2(R)r_{m}(R) \\
        & =  \frac{c_{j,f}(\frac{m}{2})}{k(n)\delta(n)d^k(n)}x^\frac{m}{2}+O\left( \frac{x^{\frac{m}{2}+\epsilon-\frac{2}{(j+1)^2}}(k(n)\delta(n)d^k(n))^{1+\epsilon}}{\phi(k(n)\delta(n)d^k(n))} \right), \quad (\text{ by Theorem~\ref{thm:main1}}).
\end{align}

So we have \begin{align}
    \sum_{1}^* = \sum_1^{'} + \sum_1^{"},
\end{align}

where \begin{align}
    \sum_1^{'} = \sum_{k(n) \leq H}a^2(k(n)) \sum_{\substack{d(n) \leq H^{1/k} \\ (d(n),k(n)) = 1}} \mu(d(n)) \sum_{\delta(n)|k(n)} \mu(\delta(n)) \frac{c_{j,f}(\frac{m}{2})x^{\frac{m}{2}}}{\delta(n)k(n)d^k(n)},
\end{align}

and proceeding as in the previous theorem, we have \begin{align}
    \sum_{1}^{''} \ll x^{\frac{m}{2}+\epsilon-\frac{2}{(j+1)^2}}H^{2/k+\epsilon}.
\end{align}

Now \begin{align}
    \sum_1^{'} &= c_{j,f}(\frac{m}{2})x^\frac{m}{2}\sum_{k(n) \leq H} \frac{a^2(k(n))}{k(n)} \sum_{\delta(n)|k(n)} \frac{\mu(\delta(n))}{\delta(n)}\sum_{\substack{d(n) \leq H^{1/k} \\ (d(n),k(n)) = 1}} \frac{\mu(d(n))}{d^k(n)} \\
    &= c_{j,f}(\frac{m}{2})x^\frac{m}{2}\sum_{k(n)=1}^{\infty} \frac{a^2(k(n))}{k(n)} \sum_{\delta(n)|k(n)} \frac{\mu(\delta(n))}{\delta(n)}\sum_{\substack{d(n)=1}}^{\infty} \frac{\mu(d(n))}{d^k(n)} \\
    &+ O \left( x^\frac{m}{2}\sum_{k(n)=1}^{\infty} \frac{a^2(k(n))}{k(n)} \sum_{\delta(n)|k(n)} \frac{\mu(\delta(n))}{\delta(n)}\sum_{\substack{d(n) > H^{1/k} \\ (d(n),k(n)) = 1}} \frac{\mu(d(n))}{d^k(n)}\right) \\
    &+ O \left( x^\frac{m}{2}\sum_{k(n) > H} \frac{a^2(k(n))}{k(n)} \sum_{\delta(n)|k(n)} \frac{\mu(\delta(n))}{\delta(n)}\sum_{\substack{d(n) \leq H^{1/k} \\ (d(n),k(n)) = 1}} \frac{\mu(d(n))}{d^k(n)}\right) \\
    & = c_{j,f}(\frac{m}{2})x^\frac{m}{2}\sum_{k(n)=1}^{\infty} \frac{a^2(k(n))}{k(n)} \sum_{\delta(n)|k(n)} \frac{\mu(\delta(n))}{\delta(n)}\sum_{\substack{d(n)=1}}^{\infty} \frac{\mu(d(n))}{d^k(n)} +J_1+J_2.
\end{align}

Now \begin{align}
    J_1 &\ll x^\frac{m}{2}\sum_{k(n)=1}^{\infty}\frac{(k(n))^{\epsilon}}{k(n)} \sum_{\delta(n)|k(n)} \frac{1}{\delta(n)}\sum_{\substack{d(n) > H^{1/k}}} \frac{1}{d^k(n)} \\
    & \ll x^\frac{m}{2}\sum_{k(n)=1}^{\infty}\frac{1}{k(n)^{1-\epsilon}} \sum_{\delta(n)|k(n)} 1\sum_{\substack{d^k(n) > H}} \frac{1}{d^k(n)}  \ll x^\frac{m}{2}\sum_{k(n)=1}^{\infty}\frac{1}{k(n)^{1-2\epsilon}}H^{1/k-1} \\
    & \ll x^\frac{m}{2} \sum_{n=1}^{\infty}\frac{1}{n^{k-2k\epsilon}}H^{1/k-1}   \ll x^\frac{m}{2}H^{1/k-1} \quad ( \text{ as } k(n) \text{ is $k$-full }).
\end{align}

Again \begin{align}
    & J_2 \ll x^\frac{m}{2} \sum_{k(n) > H}\frac{1}{(k(n))^{1-\epsilon}}\sum_{\delta(n)|k(n)}1\sum_{d(n) \leq H^{1/k}}\frac{1}{d^k(n)} \\
    &\ll x^\frac{m}{2}H^{1/k-1+2\epsilon} \left(\sum_{n=1}^{\infty}\frac{1}{n^k}+\sum_{d^k(n) > H} \frac{1}{d^k(n)}\right)  \ll x^\frac{m}{2}H^{1/k-1+\epsilon} + x^5H^{2/k-2+2\epsilon}  \ll x^\frac{m}{2}H^{1/k-1+\epsilon}.
\end{align}

Thus we have $J_1+J_2 = O(x^{\frac{m}{2}+\epsilon}H^{1/k-1+\epsilon})$ and the first term is finite. So \begin{align}
    \sum_1^{'} = C_{j,f}x^\frac{m}{2}+O(x^{\frac{m}{2}+\epsilon}H^{1/k-1+\epsilon}). 
\end{align}

Thus, combining everything, we have \begin{align}
    \sum_{n \leq x} a(n) \lambda_{\mathrm{sym}^jf}^2(n+1)r_{m}(n+1) &= C_{j,f}x^\frac{m}{2}+O(x^{\frac{m}{2}+\epsilon}H^{1/k-1+\epsilon} + x^{\frac{m}{2}+\epsilon-\frac{2}{(j+1)^2}}H^{2/k+\epsilon}).
\end{align}

 we now choose $H = x^{\frac{2}{3(j+1)^2}-\epsilon}$ and we obtain \begin{align}
    \sum_{n \leq x} a(n) \lambda_{\mathrm{sym}^jf}^2(n+1)r_{m}(n+1) &= C_{j,f}x^\frac{m}{2}+O \left( x^{\frac{m}{2}-\frac{2k-2}{3(j+1)^2k}+\epsilon} \right).
\end{align}

So the result happens for $m=4,6,8$.

Now for $m=2$, we have from \eqref{eq:m11(n)}, \begin{align}
    &\sum_{\substack{m_1(n)\delta(n)k(n)d^k(n) \leq x \\}}\lambda_{\mathrm{sym}^jf}^2(k(n)m_1(n)\delta(n)d^k(n)+1)r_{2}(k(n)m_1(n)\delta(n)d^k(n)+1) \\ 
        & := \sum_{\substack{R \leq x+1 \\ R \equiv 1 ~(\mathrm{mod}~k(n)\delta(n)d^k(n))}}\lambda_{\mathrm{sym}^jf}^2(R)r_{2}(R) \\
        & =  \frac{c_{j,f}(1)}{k(n)\delta(n)d^k(n)}x+O\left( \frac{x^{1+\epsilon-\frac{1}{(j+1)^2}}(k(n)\delta(n)d^k(n))^{1+\epsilon}}{\phi(k(n)\delta(n)d^k(n))} \right), \quad (\text{ by Theorem~\ref{thm:main1}}),
\end{align}

and proceeding as in the previous case, we have \begin{align}
    \sum_1' &= D_{f,j}x+O\left( x^{1+\epsilon}H^{1/k-1+\epsilon} \right)
\end{align}

and \begin{align}
    \sum_{1}^{''} \ll x^{1-\frac{1}{(j+1)^2}+\epsilon}H^{2/k+\epsilon}
\end{align}

Thus, combining everything, we have \begin{align}
    \sum_{n \leq x} a(n) \lambda_{\mathrm{sym}^jf}^2(n+1)r_{2}(n+1) &= C_{j,f}x+O(x^{1+\epsilon}H^{1/k-1+\epsilon} + x^{1+\epsilon-\frac{1}{(j+1)^2}}H^{2/k+\epsilon}).
\end{align}

 we now choose $H = x^{\frac{2}{3(j+1)^2}-\epsilon}$ and we obtain \begin{align}
    \sum_{n \leq x} a(n) \lambda_{\mathrm{sym}^jf}^2(n+1)r_{2}(n+1) &= C_{j,f}x+O \left( x^{1-\frac{k-1}{3(j+1)^2k}+\epsilon} \right).
\end{align}

\section{Proof of Theorem~\ref{thm:mainsign}} \label{sec:sign-change-2}

Let $S(x) = \sum_{n \leq x} a(n)\lambda_{\mathrm{sym}^jf}(n+1)r_2(n+1)$ and  $h=x^{\delta_j}$ with $A(j)\coloneqq 1-\frac{k-1}{3k(j+1)^2}  < \delta_j < 1$. Now suppose that $\{ a(n)\lambda_{\mathrm{sym}^jf}(n+1) | n = a_1^2 + a_2^2 - 1, a_i \in \mathbb{Z} \}$ does not change any sign in the interval $n \in (x,x+h]$ and without loss of generality suppose that the sequence stays positive in the given interval.

Using the second result of Theorem~\ref{thm:main2}, we have
\begin{align} \label{eq:uppbound}
    &\sum_{x < n \leq x+h}a^2(n)\lambda_{\mathrm{sym}^jf}^2(n+1)r_2(n+1)\\
    &= \sum_{x < n \leq x+h}a(n)\lambda_{\mathrm{sym}^jf}(n+1)a(n)\lambda_{\mathrm{sym}^jf}(n+1)r_2(n+1) \\
    & \ll (x+h)^{\epsilon} \sum_{x < n \leq x+h}a(n)\lambda_{\mathrm{sym}^jf}(n+1)r_2(n+1) \quad (\text{as } a(n)\lambda_{\mathrm{sym}^jf}(n+1) \ll n^\epsilon \text{ for any } \epsilon > 0) \\
    &\ll x^\epsilon (|S(x+h)|+|S(x)|) \\
    & \ll x^{1-\frac{k-1}{3k(j+1)}+\epsilon},
\end{align}
for any $\epsilon>0$, as small as possible.

Now using the second result of Theorem~\ref{thm:main3}, we have 
\begin{align}
    \sum_{x < n \leq x+h}a^2(n)\lambda_{\mathrm{sym}^jf}^2(n+1)r_2(n+1) & = D_{j,f,2}h + O\left( x^{1-\frac{k-1}{3k(j+1)^2}+\epsilon}\right) \\
    &= D_{j,f,2}x^{\delta_j} + O\left( x^{1-\frac{k-1}{3k(j+1)^2}+\epsilon}\right).
\end{align}

Lemma~\ref{dominating main term} ensures that 
\begin{align}\label{lower bound}
    \sum_{x < n \leq x+h}a^2(n)\lambda_{\mathrm{sym}^jf}^2(n+1)r_2(n+1)  \gg  x^{\delta_j}.
\end{align}
Combining \eqref{eq:uppbound} and \eqref{lower bound}, we obtain
$$x^{\delta_j }\ll x^{1-\frac{k-1}{3k(j+1)}+\epsilon}$$
as $x\to \infty$ for any $\epsilon >0.$ That is, $x^{\delta_j -A(j)-\epsilon}\ll 1$ as $x\to \infty$ for any $\epsilon >0.$ In particular, choosing $\epsilon=\frac{1}{2}(\delta_j -A(j))>0$, we obtain that $x^{\epsilon} \ll 1$ as $x \to \infty$, which is a contradiction.

This implies that there exists at least one sign change in the interval $(x, x+x^{\delta_j}]$, where $x$ is sufficiently large. Similarly, we can prove that there exists at least one sign change in $(x+x^{\delta_j}, x+2x^ {\delta_j}]$ and so on.

Note that $2x = x+ x^{1-\delta_j}x^{\delta_j}$, and we have that there exists at least $x^{1-\delta_j}$ number of sign changes in the interval $(x,2x]$.

\section{Proof of Theorem~\ref{thm:mainsign1}} \label{sec:sign-change-m}

Let $S_m(x) = \sum_{n \leq x} a(n)\lambda_{\mathrm{sym}^jf}(n+1)r_m(n+1)$ and  $h=x^{\delta_j}$ with $1-\frac{2k-2}{3k(j+1)^2}  < \delta_j < 1-\frac{k-1}{3k(j+1)^2}$. Now suppose that $\{ a(n)\lambda_{\mathrm{sym}^jf}(n+1) | n = \sum_{i=1}^{m}a_i^2-1, a_i \in \mathbb{Z} \}$ does not change any sign in the interval $n \in (x,x+h]$ and without loss of generality suppose that the sequence stays positive in the given interval.

Using the first result of Theorem~\ref{thm:main2}, we have
\begin{align} \label{eq:uppbound1}
    &\sum_{x < n \leq x+h}a^2(n)\lambda_{\mathrm{sym}^jf}^2(n+1)r_m(n+1) \\ &= \sum_{x < n \leq x+h}a(n)\lambda_{\mathrm{sym}^jf}(n+1)a(n)\lambda_{\mathrm{sym}^jf}(n+1)r_m(n+1) \\
    & \ll (x+h)^{\epsilon} \sum_{x < n \leq x+h}a(n)\lambda_{\mathrm{sym}^jf}(n+1)r_m(n+1) \quad (\text{ as } a(n)\lambda_{\mathrm{sym}^jf}(n+1) \ll n^\epsilon, \text{ for all } \epsilon > 0) \\
    &\ll x^\epsilon (|S_m(x+h)|+|S_m(x)|) \\
    & \ll x^{\frac{m}{2}-\frac{2k-2}{3k(j+1)}+\epsilon}.
\end{align}

Now using the first result of Theorem~\ref{thm:main3}, we have 
\begin{align}
    &\sum_{x < n \leq x+h}a(n)^2\lambda_{\mathrm{sym}^jf}^2(n+1)r_m(n+1) \\ & = D_{j,f,m}(x+h)^\frac{m}{2}-D_{j,f,m}x^{\frac{m}{2}} + O\left( x^{\frac{m}{2}-\frac{2}{(j+1)^2}+\epsilon}\right) \\
    &= D_{j,f,m}(c_1x^{\frac{m}{2}-1+\delta_j}+c_2x^{\frac{m}{2}-2+2\delta_j} + \cdots +c_{\frac{m}{2}}x^{\frac{m}{2}\delta_j})+ O\left( x^{\frac{m}{2}-\frac{2}{(j+1)^2}+\epsilon}\right)\\
    &= D_{j,f,m}'x^{\frac{m}{2}-1+\delta_j} + O\left( x^{\frac{m}{2}-\frac{2}{(j+1)^2}+\epsilon}\right) \quad  \left(\text{ as } \delta_j < 1-\frac{k-1}{(j+1)^2} \right ),
\end{align}
where $D_{j,f,m}'$ is a constant, depending on $j,f$ and $m$.

Thus, by the Lemma~\ref{dominating main term},  we have \begin{align} \label{eq:lowbound1}
    \sum_{x < n \leq x+h}\lambda_{\mathrm{sym}^jf}^2(n)r_m(n) \gg x^{\frac{m}{2}-1+\delta_j}.
\end{align}

Combining \eqref{eq:uppbound1} and \eqref{eq:lowbound1}, we have \begin{align}
    x^{\frac{m}{2}-1+\delta_j} \ll x^{\frac{m}{2}-\frac{2k-2}{3k(j+1)}+\epsilon}.
\end{align}
This gives \begin{align}
    x^{\frac{2k-2}{3k(j+1)}-\frac{2k-2}{3k(j+1)^2}+\epsilon} \leq x^{\frac{m}{2}-1+\delta_j-\frac{m}{2}+\frac{2}{j+1}+\epsilon} \ll 1,
\end{align}
for any $\epsilon > 0$, as small as possible. This leads to a contradiction as $x \mapsto +\infty$.

This implies that there exists at least one sign change in the interval $(x, x+x^{\delta_j}]$, where $x$ is sufficiently large. Similarly, we can prove that there exists at least one sign change in the interval $(x+x^{\delta_j}, x+2x^{\delta_j}]$ and so on. Thus, we have that there exists at least $x^{1-{\delta_j}}$ number of sign changes in the interval $(x,2x]$.

\bibliographystyle{plain}
\bibliography{references}

\end{document}